\newcommand*{\mailto}[1]{\href{mailto:#1}{\nolinkurl{#1}}}
\newcommand{\arxiv}[1]{\href{http://arxiv.org/abs/#1}{arXiv:#1}}
\newcommand{\bbC}{{\mathbb{C}}}
\newcommand{\bbN}{{\mathbb{N}}}
\newcommand{\bbR}{{\mathbb{R}}}
\newcommand{\bbS}{{\mathbb{S}}}
\newcommand{\cB}{{\mathcal B}}
\newcommand{\cH}{{\mathcal H}}
\newcommand{\cK}{{\mathcal K}}
\newcommand{\cS}{{\mathcal S}}
\newcommand{\cW}{{\mathcal W}}
\newcommand{\gq}{\mathfrak q} 
\newcommand{\B}{\vec{B}}
\newcommand{\e}{{\varepsilon}}
\newcommand{\Sect}{\text{\rm Sect}}
\DeclareMathOperator{\ran}{ran}
\DeclareMathOperator{\dom}{dom}
\DeclareMathOperator{\tr}{tr}
\renewcommand{\Re}{\text{\rm Re}}
\renewcommand{\Im}{\text{\rm Im}}
\renewcommand{\ln}{\text{\rm ln}}
\newcommand{\loc}{\text{\rm{loc}}}
\newcommand{\locunif}{\operatorname{loc \, unif}}
\newcommand{\beq}{\begin{equation}}
\newcommand{\enq}{\end{equation}}
\newcommand{\p}{\prime}
\newcommand{\no}{\notag}
\newcommand{\lb}{\label}
\newcommand{\f}{\frac}
\newcommand{\ol}{\overline}
\newcommand{\bi}{\bibitem}
\newcommand{\dott}{\,\cdot\,}
\let\geq\geqslant
\let\leq\leqslant
\newcommand{\dv}{\operatorname{div}}
\def\theequation{\@arabic\c@equation}
\numberwithin{equation}{section}
\newtheorem{theorem}{Theorem}[section]
\newtheorem{proposition}[theorem]{Proposition}
\newtheorem{lemma}[theorem]{Lemma}
\newtheorem{corollary}[theorem]{Corollary}
\newtheorem{hypothesis}[theorem]{Hypothesis}
\newtheorem{example}[theorem]{Example}
\theoremstyle{remark}
\newtheorem{remark}[theorem]{Remark}
\begin{document}

\title[Square Root Domains]{On Square Root Domains for Non-Self-Adjoint Sturm--Liouville Operators}

\author[F.\ Gesztesy]{Fritz Gesztesy}
\address{Department of Mathematics,
University of Missouri, Columbia, MO 65211, USA}
\email{\mailto{gesztesyf@missouri.edu}}
\urladdr{\url{http://www.math.missouri.edu/personnel/faculty/gesztesyf.html}}

\author[S.\ Hofmann]{Steve Hofmann}
\address{Department of Mathematics,
University of Missouri, Columbia, MO 65211, USA}
\email{\mailto{hofmanns@missouri.edu}}
\urladdr{\url{http://www.math.missouri.edu/$\sim$hofmann/}}

\author[R.\ Nichols]{Roger Nichols}
\address{Mathematics Department, The University of Tennessee at Chattanooga, 
415 EMCS Building, Dept. 6956, 615 McCallie Ave, Chattanooga, TN 37403, USA}
\email{\mailto{Roger-Nichols@utc.edu}}
\urladdr{\url{https://www.utc.edu/faculty/roger-nichols/}}

\dedicatory{Dedicated with great pleasure to Fedor S. Rofe-Beketov on the occasion of 
his 80th birthday.}
\thanks{Appeared in Methods Funct.~Anal.~Topology {\bf 19}, No.~3, 227--259 (2013).}
\date{\today}
\subjclass[2010]{Primary 34B40, 34L05, 34L40, 47A07, 47A55; Secondary 34B24, 34B27, 47B44, 47E05.}
\keywords{Square root domains, Kato problem, additive perturbations, Sturm--Liouville operators.}

\begin{abstract} 
We determine square root domains for non-self-adjoint Sturm--Liouville operators of the type 
$$
L_{p,q,r,s} = - \frac{d}{dx}p\frac{d}{dx}+r\frac{d}{dx}-\frac{d}{dx}s+q  
$$
in $L^2((c,d);dx)$, where either $(c,d)$ coincides with the real line $\bbR$, the half-line $(a,\infty)$, 
$a \in \bbR$, or with the bounded interval $(a,b) \subset \bbR$, under very general conditions on 
the coefficients $q, r, s$. We treat Dirichlet and Neumann boundary conditions at $a$ in the half-line 
case, and Dirichlet and/or Neumann boundary conditions at $a,b$ in the final interval context. (In the 
particular case $p=1$ a.e.\ on $(a,b)$, we treat all separated boundary conditions at $a, b$.) 
\end{abstract}

\maketitle

\section{Introduction}  \lb{s1}

This paper is dedicated to Fedor Rofe-Beketov, an eminent mathematician, at the occasion of his 
80th birthday. Fedor's influence on modern spectral theory of differential operators has been profound 
as is evidenced, for instance, by the monograph \cite{RBK05}. In particular, his work on periodic 
one-dimensional Schr\"odinger operators in the non-self-adjoint as well as self-adjoint context, and the associated perturbation theory in the latter case, is legendary (cf.\ his 
tremendously influential papers \cite{Ro63}, \cite{RB64}, \cite{RB73}) and we hope this contribution to 
Sturm--Liouville theory will create some joy for him. 

The principal aim of this paper is to apply the abstract approach to the problem of stability 
of square root domains of a class of non-self-adjoint operators with respect to additive 
perturbations developed in \cite{GHN13}, to the concrete case of Sturm--Liouville operators of 
the form 
\begin{equation}\lb{1.1}
L_{p,q,r,s} = - \frac{d}{dx}p\frac{d}{dx}+r\frac{d}{dx}-\frac{d}{dx}s+q  
\end{equation}
in $L^2((c,d);dx)$, with $(c,d) = \bbR$, or $(c,d) = (a,\infty)$, or $(c,d)=(a,b)$ with 
$ - \infty < a < b < \infty$. For some constants 
$0<\lambda\leq \Lambda < \infty$ we assume 
\begin{equation} 
\lambda<\Re(p(x)), \quad |p(x)|\leq \Lambda \, \text{ a.e.\ on $(c,d)$},    \lb{1.2} 
\end{equation}  
and either 
\begin{equation} 
q\in L_{\locunif}^1(\bbR;dx), \quad r,s\in L_{\locunif}^2(\bbR;dx),    \lb{1.3} 
\end{equation} 
or (cf.\ \eqref{4.2}) 
\begin{equation} 
q\in L_{\locunif}^1([a,\infty);dx), \quad r,s\in L_{\locunif}^2([a,\infty);dx),    \lb{1.3a} 
\end{equation} 
assuming Dirichlet or Neumann boundary conditions at $a$, or 
\begin{equation} 
q\in L^1((a,b);dx), \quad r,s\in L^2((a,b);dx).      \lb{1.4}
\end{equation}  
In the latter case we impose Dirichlet and/or Neumann boundary conditions at $a,b$, and in the particular 
case $p=1$ a.e.\ on $(a,b)$, we also treat all separated boundary conditions given by 
\begin{equation}
g(a)\cos(\theta_a) + g^{[1]}(a)\sin(\theta_a) = 0,  \quad 
g(b)\cos(\theta_b) - g^{[1]}(b)\sin(\theta_b) = 0, \quad \theta_a,\theta_b\in \bbS_{\pi},    \lb{1.5} 
\end{equation}
where $y^{[1]}(x)=p(x)y'(x)$ for a.e.\ $x\in(a,b)$ denotes the first quasi-derivative of $y \in AC([a,b])$ and 
$\bbS_{\pi}$ abbreviates the strip 
\begin{equation}
\bbS_{\pi}=\{z\in \bbC\, |\, 0\leq \Re(z)<\pi \}.     \lb{1.6} 
\end{equation}
In the half-line context we will employ the notation $L_{p,q,r,s}^{a,D}$ and $L_{p,q,r,s}^{a,N}$ instead of 
$L_{p,q,r,s}$ to emphasize the boundary condition at the finite endpoint $a$; similarly, in the finite interval context we will employ the notation $L_{p,q,r,s}^{(\theta_a,\theta_b)}$. 

Our aim in the real line case is to prove that 
\begin{equation} 
\dom\big(L_{p,q,r,s}^{1/2}\big) = \dom\big(\big(L_{p,q,r,s}^*\big)^{1/2} \big) = W^{1,2}(\bbR).     \lb{1.7} 
\end{equation}
Similarly, in the half-line context we will prove that
\begin{align}
\begin{split} 
\dom\big((L_{p,q,r,s}^{a,D})^{1/2} \big) = \dom\big(\big((L_{p,q,r,s}^{a,D})^*\big)^{1/2} \big) 
= W^{1,2}_0((a,\infty)),&\lb{1.7a}\\
\dom\big((L_{p,q,r,s}^{a,N})^{1/2} \big) = \dom\big(\big((L_{p,q,r,s}^{a,N})^*\big)^{1/2} \big) 
= W^{1,2}((a,\infty)).&
\end{split} 
\end{align}
In the finite interval case we will prove that
\begin{align} 
\begin{split} 
& \dom \Big(\big(L_{p,q,r,s}^{(\theta_a,\theta_b)}\big)^{1/2}\Big) = 
\dom \Big(\big(\big(L_{p,q,r,s}^{(\theta_a,\theta_b)}\big)^*\big)^{1/2}\Big)     \\
& \quad = \dom\big(\mathfrak{q}_{p,q,r,s}^{(\theta_a,\theta_b)} \big) 
= \dom\big(\mathfrak{q}_{1,0,0,0}^{(\theta_a,\theta_b)} \big),  
\quad \theta_a,\theta_b\in \{0,\pi/2\},  \lb{1.7b}
\end{split} 
\end{align}
and if $p=1$ a.e.\ on $(a,b)$, that 
\begin{align} 
\begin{split} 
& \dom \Big(\big(L_{p,q,r,s}^{(\theta_a,\theta_b)}\big)^{1/2}\Big) 
= \dom \Big(\big(\big(L_{p,q,r,s}^{(\theta_a,\theta_b)}\big)^*\big)^{1/2}\Big)     \lb{1.8} \\
& \quad = \dom\big(\mathfrak{q}_{p,q,r,s}^{(\theta_a,\theta_b)} \big) 
= \dom\big(\mathfrak{q}_{1,0,0,0}^{(\theta_a,\theta_b)} \big), \quad \theta_a,\theta_b\in \bbS_{\pi}. 
\end{split} 
\end{align}
Here the quadratic forms $\mathfrak{q}_{1,0,0,0}^{(\theta_a,\theta_b)}$ and 
$\mathfrak{q}_{p,q,r,s}^{(\theta_a,\theta_b)}$ are defined in \eqref{3.3}--\eqref{3.9} and \eqref{3.11}, 
respectively. 

Previously, one-dimensional operators such as $L_{p,q,r,s}$, $L_{p,q,r,s}^{(\theta_a,\theta_b)}$ 
have been considered, typically, under the assumption of uniform ellipticity, and supposing that 
$p,q,r,s\in L^{\infty}((a,b);dx)$, see, for instance, \cite{AMN97a}, \cite{AMN97}, \cite{AT91}, 
\cite{AT92}, \cite{AKM06}, \cite{AKM06a}, \cite{Jo91}, \cite{KM85}, \cite{Mc68}, \cite{Ok80}. 
The case of one-dimensional Schr\"odinger operators on $(0,1)$ and integrable potential $q$ was 
considered in \cite{KR99}. Applying the approach developed in \cite{GHN13}, we now show that the boundedness assumption on the coefficient functions, $q$, $r$, and $s$, can be weakened to the 
general conditions \eqref{1.3} and \eqref{1.4}, respectively. 

To motivate the interest in stability questions of square root domains and especially, relations such as \eqref{1.7} and 
\eqref{1.8}, we now digress a bit and briefly turn to certain aspects of the Kato square root problem. We also mention our final Remark \ref{r1.3} below which offers additional motivations in terms of trace formulas and Fredholm determinants. 

Suppose that $\cH$ denotes a complex Hilbert space with inner product $(\, \cdot\, , \, \cdot \,)_{\cH}$, and let $\mathfrak{q}$ denote a {\it densely defined}, {\it closed}, {\it sectorial} sesquilinear form 
(cf.\ \cite[Sect.\ VI.1]{Ka80})
with domain $\dom(\mathfrak{q})\subseteq\cH$.  Under these assumptions, it is a well-known fact that one can uniquely associate to $\mathfrak{q}$ an m-sectorial operator $T_{\mathfrak{q}}$ for which
\begin{equation}\lb{1.9}
\gq(g,f) = (g,T_{\gq}f)_{\cH},\quad f\in \dom\big(T_{\gq} \big),\, g\in \dom(\gq).
\end{equation}
In this case, $T_{\mathfrak{q}}$ is called the m-sectorial operator associated to $\mathfrak{q}$. 

Any m-accretive operator $T$ (cf.,~e.g., \cite[Sect.\ III.6]{EE89} and 
\cite[Sect.\ V.3.10]{Ka80}) admits a fractional power $T^{\alpha}$, $\alpha\in (0,1)$, such that $\dom(T)$ is a core for $T^{\alpha}$. In this case one has the integral representation
\begin{equation}\lb{1.11}
T^{\alpha}f=\frac{\sin(\pi \alpha)}{\pi}\int_0^{\infty}t^{\alpha-1}T(T+t I_{\cH})^{-1}f\, dt, 
\quad f\in \dom(T), \; \alpha \in (0,1).
\end{equation}
The operator $T^{\alpha}$, $\alpha\in (0,1)$, is itself m-accretive and 
\begin{align}\lb{1.12}
& (T^{\alpha})^*=(T^*)^{\alpha},\\ 
& T^{\alpha}T^{\beta} f =T^{\alpha+\beta}f, \quad f \in \dom\big(T^{\alpha+\beta}\big), 
\; \alpha, \beta, \, \alpha + \beta \in (0,1).
\end{align}
Kato studied fractional powers of m-accretive operators in \cite{Ka61} and isolated the power $\alpha=1/2$ as critical in the following sense. For positive fractional powers below $1/2$, Kato proved in 1961 that 
\begin{equation}\lb{1.13}
\dom(T^{\alpha})=\dom((T^*)^{\alpha}),\quad 0<\alpha<1/2,
\end{equation}
for any m-accretive operator $T$.  By constructing explicit counter examples, Kato showed in the same paper that the equality in \eqref{1.13} need not hold when $1/2<\alpha<1$.  However, the question of 
\begin{equation}\lb{1.14}
\dom(T^{1/2})\stackrel{{\boldsymbol ?}}{=}\dom((T^*)^{1/2})
\end{equation}
for a general m-accretive operator $T$ was left unsettled by Kato \cite{Ka61}.
The following example further illustrates the criticality of the power $\alpha=1/2$.  

\begin{example} \lb{e1.1}
Suppose $p\in L^{\infty}(\bbR:dx)$ and $p\geq \varepsilon$ for some $\varepsilon >0$.
Consider the operator $S$ 
\begin{equation}\lb{1.15}
S:=-\f{d}{dx}p\f{d}{dx}, \quad \dom(S)=\{f\in W^{1,2}(\bbR)\, |\, pf'\in W^{1,2}(\bbR)\},
\end{equation}
in $L^2(\bbR;dx)$. Then $S$ is self-adjoint. The domain of every $\alpha$-th power, $0<\alpha<1$, of $S$ 
is known $($cf., e.g., \cite{AT92}, \cite{DJS85}, \cite[p.~257]{Ou05}$)$, 
\begin{align}
& \dom(S^{\alpha}) = W^{2\alpha,2}(\bbR),\quad 0<\alpha<1/2,\no\\
& \dom(S^{1/2}) = W^{1,2}(\bbR),\lb{1.16}\\
& \dom(S^{\alpha}) = \{f\in W^{1,2}(\bbR)\, |\, pf'\in W^{2\alpha-1,2}(\bbR)\},\quad 1/2<\alpha<1.\no
\end{align}
One notes the sudden change in the domains once the critical power $\alpha=1/2$ is reached.
\end{example}

A year after the appearance of the paper \cite{Ka61} by Kato, Lions \cite{Li62} in 1962 published his construction of an m-accretive operator, the operator associated with 
\begin{equation} 
T = d/dx, \quad \dom(T) = W^{1,2}_0((0,\infty)) 
\end{equation} 
in $L^2((0,\infty); dx)$) for which equality {\it does not} hold in \eqref{1.14} (see also \cite{Mc70} for additional material on such counter examples). This settled the question \eqref{1.14} for m-accretive operators in the negative.  The operator constructed by Lions does not arise as the operator associated to a sectorial sesquilinear form.  Thus, Lions' example left open the possibility of general equality in \eqref{1.14} for an m-sectorial operator $T$ arising as the operator associated to a sectorial sesquilinear form.  

The importance of whether or not
\begin{equation}\lb{1.18}
\dom\big(T_{\mathfrak{q}}^{1/2} \big)=\dom\big((T_{\mathfrak{q}}^*)^{1/2} \big) \, \text{ with $T_{\mathfrak{q}}$
m-sectorial and associated to $\mathfrak{q}$}  
\end{equation}
is brought to light by the following result which was independently proved by Kato and Lions in 1962.

\begin{theorem} \lb{t1.2}$($Kato \cite{Ka62}, Lions \cite{Li62}$)$
Suppose that $\mathfrak{q}$ is a densely defined, closed, sectorial sesquilinear form with domain $\dom(\mathfrak{q})\subset \cH$ and associated m-sectorial operator $T_{\mathfrak{q}}$.  If two of $\dom(T_{\mathfrak{q}}^{1/2})$, $\dom((T_{\mathfrak{q}}^*)^{1/2})$, and $\dom(\mathfrak{q})$ coincide, then all three are equal, and one has a second, complete, representation theorem for $\mathfrak{q}$ in terms of $T_{\mathfrak{q}}^{1/2}$,
\begin{equation}\lb{1.19}
\mathfrak{q}_{{}_T}(f,g)=\big((T^*)^{1/2}f,T^{1/2}g\big)_{\cH},\quad f,g\in \dom(\mathfrak{q})=\dom\big(T_{\mathfrak{q}}^{1/2}\big)=\dom\big((T_{\mathfrak{q}}^*)^{1/2}\big).
\end{equation}
\end{theorem}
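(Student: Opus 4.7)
My plan rests on a single ``fundamental identity'' relating $\mathfrak{q}$ to the half-powers $T^{1/2}$, $(T^{*})^{1/2}$, combined with the symmetry between $(T,\mathfrak{q})$ and $(T^{*},\mathfrak{q}^{*})$. First observe that the adjoint form $\mathfrak{q}^{*}(u,v):=\overline{\mathfrak{q}(v,u)}$ is again densely defined, closed, and sectorial, with $\dom(\mathfrak{q}^{*})=\dom(\mathfrak{q})$ and associated m-sectorial operator $T^{*}$. Thus the three ``two-out-of-three coincide'' hypotheses reduce to two essentially different cases: (A) $\dom(T^{1/2})=\dom((T^{*})^{1/2})$, and (B) $\dom(\mathfrak{q})=\dom(T^{1/2})$ (the hypothesis $\dom(\mathfrak{q})=\dom((T^{*})^{1/2})$ being the $*$-conjugate of (B)). After a harmless shift $T\mapsto T+\omega I_{\cH}$ for $\omega>0$ sufficiently large, I may assume $T$ is strictly m-accretive so that $T^{1/2}$ and $(T^{*})^{1/2}$ are boundedly invertible and m-accretive with semiangle below $\pi/4$.

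The key identity, valid for $u\in\dom(T^{*})$ and $v\in\dom(T)$, is
\begin{equation*}
\mathfrak{q}(u,v)=\bigl((T^{*})^{1/2}u,\,T^{1/2}v\bigr)_{\cH},
\end{equation*}
and is derived from the first-representation formula $\mathfrak{q}(u,v)=(T^{*}u,v)_{\cH}$, the composition law $(T^{*})^{1/2}(T^{*})^{1/2}=T^{*}$ on $\dom(T^{*})$, and the adjoint relation $\bigl((T^{*})^{1/2}\bigr)^{*}=T^{1/2}$ from \eqref{1.12}.

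For case (A), set $\mathcal{D}:=\dom(T^{1/2})=\dom((T^{*})^{1/2})$. Since each half-power is closed, both graph norms make $\mathcal{D}$ a Hilbert space, and the closed graph theorem renders them equivalent; moreover $\dom(T)$ is a core for both. Given $u\in\mathcal{D}$, choose $(u_{n})\subset\dom(T)$ converging to $u$ in this common graph norm; applying the key identity to $u_{n}-u_{m}$ and taking real parts gives $\mathfrak{h}(u_{n}-u_{m})\to 0$, where $\mathfrak{h}:=\Re\mathfrak{q}$, so $(u_{n})$ is Cauchy in the form norm and closedness of $\mathfrak{q}$ yields $u\in\dom(\mathfrak{q})$ together with the representation \eqref{1.19}. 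This shows $\mathcal{D}\subseteq\dom(\mathfrak{q})$, and the same limiting argument delivers the estimate $\mathfrak{h}(v)\leq\|T^{1/2}v\|\,\|(T^{*})^{1/2}v\|\leq C\,\|T^{1/2}v\|^{2}$ for $v\in\mathcal{D}$. For the reverse inclusion $\dom(\mathfrak{q})\subseteq\mathcal{D}$, fix $g\in\dom(\mathfrak{q})$; the sector bound $|\mathfrak{q}(g,v)|\leq(1+\tan\theta)\mathfrak{h}(g)^{1/2}\mathfrak{h}(v)^{1/2}$ combined with the just-mentioned estimate shows the linear functional $T^{1/2}v\mapsto\mathfrak{q}(g,v)$ on the dense subspace $T^{1/2}[\dom(T)]\subset\cH$ extends boundedly to all of $\cH$; invoking Riesz together with the defining property of $(T^{1/2})^{*}=(T^{*})^{1/2}$ identifies $g\in\dom((T^{*})^{1/2})=\mathcal{D}$.

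Case (B) reduces to case (A): given $\dom(\mathfrak{q})=\dom(T^{1/2})$, the $*$-symmetric closure argument applied to the conjugate pair $(T^{*},\mathfrak{q}^{*})$ produces $\dom((T^{*})^{1/2})\subseteq\dom(\mathfrak{q}^{*})=\dom(\mathfrak{q})=\dom(T^{1/2})$, and swapping the roles of $T$ and $T^{*}$ supplies the reverse inclusion, so we are back in case (A). The main obstacle is the reverse inclusion in case (A); it is precisely there that m-\emph{sectoriality} (rather than mere accretivity) enters decisively through the sector bound, and it is precisely there that Lions' counterexample shows the conclusion can fail if the sector condition is dropped.
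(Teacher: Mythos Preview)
The paper does not supply its own proof of Theorem~\ref{t1.2}; the result is stated with attribution to Kato~\cite{Ka62} and Lions~\cite{Li62} and invoked as established background (it is later applied via \cite[Corollary to Theorem~1]{Ka62} in the proof of Lemma~\ref{l3.3}). There is therefore no in-paper argument against which to compare your attempt.

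On its own merits, your treatment of Case~(A) is essentially the classical argument and is sound. Your reduction of Case~(B) to Case~(A), however, is under-specified. The assertion that ``the $*$-symmetric closure argument applied to the conjugate pair $(T^{*},\mathfrak{q}^{*})$ produces $\dom((T^{*})^{1/2})\subseteq\dom(\mathfrak{q}^{*})$'' does not follow from what you established in Case~(A), since that inclusion relied on controlling \emph{both} half-powers simultaneously via the hypothesis $\dom(T^{1/2})=\dom((T^{*})^{1/2})$, which is unavailable here; and ``swapping the roles of $T$ and $T^{*}$'' for the reverse inclusion is unjustified because the hypothesis $\dom(\mathfrak{q})=\dom(T^{1/2})$ is asymmetric in $T$ and $T^{*}$. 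A clean route through Case~(B): the hypothesis together with the closed graph theorem gives the two-sided equivalence $\mathfrak{h}(v)\simeq\|T^{1/2}v\|^{2}$ on $\dom(\mathfrak{q})$. Feeding $\mathfrak{h}(v)\lesssim\|T^{1/2}v\|^{2}$ into your Riesz/duality step from the reverse inclusion of Case~(A) yields $\dom(\mathfrak{q})\subseteq\dom((T^{*})^{1/2})$. For the remaining inclusion, approximate $u\in\dom((T^{*})^{1/2})$ by $u_{n}\in\dom(T^{*})\subseteq\dom(\mathfrak{q})$ in the $(T^{*})^{1/2}$-graph norm; the key identity and the bound $\|T^{1/2}w\|\lesssim\mathfrak{h}(w)^{1/2}$ then give $\mathfrak{h}(u_{n}-u_{m})^{1/2}\lesssim\|(T^{*})^{1/2}(u_{n}-u_{m})\|\to 0$, so $u\in\dom(\mathfrak{q})$ by closedness.
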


We should mention that an extension of Theorem \ref{t1.2} is discussed in 
\cite[Theorem~8.2]{Ou05}.

One notes that when $\mathfrak{q}$ is symmetric and nonnegative, the associated operator $T_{\mathfrak{q}}$ is actually self-adjoint; hence, \eqref{1.18} holds since $T_{\mathfrak{q}}=T_{\mathfrak{q}}^*$.  In this case, \eqref{1.19} is called the {\it 2nd Representation Theorem} \cite[Theorem VI.2.23]{Ka80} for densely defined, closed, nonnegative, symmetric sesquilinear forms.

The question of whether or not \eqref{1.18} holds remained open for some time.  It was not until 1972 that McIntosh \cite{Mc72} was able to construct an m-sectorial operator  for which \eqref{1.18} fails. McIntosh's example showed that a representation of the form \eqref{1.19} is not possible for sectorial sesquilinear forms in general.  Therefore, the best one can do is identify certain examples for which \eqref{1.18} holds.  

Since the example presented by McIntosh was not a differential operator, the question arose as to whether or not \eqref{1.19} holds in the case of second-order elliptic partial differential operators and became known as the Kato square root problem for elliptic partial differential operators.  In this 
context we refer to \cite{CMM82}, \cite{Fu67}, \cite{KM85}, \cite{Mc82}, \cite{Mc85}, 
\cite{Mc90}, \cite{Mi91}, where second-order elliptic partial differential operators of the type 
\begin{equation}
-\sum_{1\leq j,k\leq n} \partial_j a_{j,k} (x) \partial_k + \sum_{1 \leq j \leq n} b_j (x) \partial_j  
+ \sum_{1 \leq j \leq n} \partial_j (c_j (x) \, \cdot \,) + d(x),      \lb{1.20} 
\end{equation}
are discussed in $L^2(\Omega)$, $\Omega \subseteq \bbR^n$ open, under mild 
regularity assumptions on $\partial \Omega$ and the coefficients $a_{j,k}$, $b_j$, $c_j$, and $d$ 
(or certain symmetry hypotheses on these coefficients). The problem continued its great attraction 
throughout the 1990s as is evidenced by the efforts in \cite{Al91}, \cite{AO12}, 
\cite{AMN97}, \cite{AT91}, \cite{AT92}, \cite{AT96},  
\cite{ER97}, \cite{Gu92}, \cite{Jo91}, \cite{Ya84}, \cite{Ya87}, culminating in the treatise 
by Auscher and Tchamitchian \cite{AT98} (see also \cite{AT01}). The final breakthrough and 
the complete solution of Kato's square root problem for elliptic operators of the type 
$- {\rm div}(a\nabla \, \cdot \,)$ on $\bbR^n$ with $L^\infty$-coefficients 
$a_{j,k}$, that is, without any smoothness hypotheses on $a_{j,k}$, occurred in 2001 in papers 
by Auscher, Hofmann, Lacey, Lewis, McIntosh, and Tchamitchian \cite{AHLLMT01}, 
\cite{AHLMT02}, \cite{AHLT01}, \cite{AHMT01}, \cite{HLM02}, \cite{HM02} (see also \cite{Au02}, \cite{Ho01}, 
\cite{Ho01a}). For subsequent developments, including higher-order operators, operators on 
Lipschitz domains $\Omega \subset \bbR^n$, $L^p$-estimates, 
and mixed boundary conditions, leading up to recent work in this area, we also refer to 
\cite{Au04}, \cite{Au07}, \cite{AAM10}, \cite{ABHDR12}, \cite{AT03}, \cite{AKM06}, \cite{BtEM12}, 
\cite{CUR12}, \cite{HM03}, \cite{HMP08}, \cite{Mo12}, \cite[Ch.\ 8]{Ou05}, 
\cite[Sect.\ 2.8, Ch.\ 16]{Ya10}.  

The case $- {\rm div}(a\nabla \, \cdot \,) + V$ in $L^2(\Omega; d^n x)$, $n \in \bbN$, $n \geq 2$, 
with Dirichlet, Neumann, and mixed boundary 
conditions on $\partial \Omega$, under varying assumptions on $\Omega$ (from just an open 
subset of $\bbR^n$ in the Dirichlet case to $\Omega \subseteq \bbR^n$ a strongly Lipschitz 
domain in the case of Neumann or mixed boundary conditions) recently appeared in \cite{GHN13}. 
The conditions on $V$ are of the type $V \in L^p(\Omega) + L^\infty(\Omega)$ with $p > n/2$.  The 
case of critical (i.e., optimal) $L^p$-conditions resulting in a proof of the fact that the square root 
domain associated with 
\begin{equation}\label{1.21}
- {\rm div}(a\nabla \, \cdot \,)  + \big(\B_1 \cdot \nabla \cdot \big) + \dv \big(\B_2 \cdot \big) + V,
\end{equation}
in $L^2(\bbR^n)$ under the conditions 
\begin{equation} 
a \in L^\infty(\bbR^n)^{n \times n}, \quad 
\B_1,\B_2 \in L^n(\bbR^n)^n + L^\infty(\bbR^n)^n, \quad V\in L^{n/2}(\bbR^n) + L^\infty(\bbR^n),    
\lb{1.22} 
\end{equation} 
is given by the standard Sobolev space $W^{1,2}(\bbR^n)$ for $n \geq 3$ is also given in \cite{GHN13}. 
This reference also contains a very detailed bibliography on this circle of ideas in dimensions $n \geq 2$. 
In addition, \cite{GHN13} contains abstract results implying 
\begin{equation}
\dom\big((T_0 + W)^{1/2}\big) = \dom\big(T_0^{1/2}\big),    \lb{1.23}
\end{equation}
as well as 
\begin{equation}
\dom\big((T_0 + W)^{1/2}\big) = \dom\big(T_0^{1/2}\big) = \dom\big((T_0^*)^{1/2}\big) 
= \dom\big(((T_0 + W)^*)^{1/2}\big).     \lb{1.24}
\end{equation}
under the assumption that 
\begin{equation}
\dom\big(T_0^{1/2}\big) = \dom\big((T_0^*)^{1/2}\big)    \lb{1.25} 
\end{equation}
for $T_0$ an m-accretive operator and appropriate perturbations $W$. While \cite{GHN13} primarily treated 
the case $n \geq 2$, we now exclusively focus on the one-dimensional setting, $n=1$ (cf.\ 
\eqref{1.1}--\eqref{1.8}.

We conclude these introductory remarks with one more remark illustrating the importance for 
control of square root domains: 

\begin{remark} \lb{r1.3}
The stability of square root domains has important consequences for trace formulas 
involving resolvent differences and symmetrized Fredholm (perturbation) determinants as 
discussed in detail in \cite{GZ12}: 
Let $A$ and $A_0$ be densely defined, closed, linear operators in $\cH$. 
Suppose there exists $t_0 \in \bbR$ such that $A + t_0 I_{\cH}$ and
$A_0 + t_0 I_{\cH}$ are of positive-type and $(A  + t_0 I_{\cH}) \in \Sect(\omega_0)$,
$(A_0  + t_0 I_{\cH}) \in \Sect(\omega_0)$ for some $\omega_0 \in [0,\pi)$. 
In addition, assume that for some $t_1 \geq t_0$,
\begin{align}
& \dom\big((A_0 + t_1 I_{\cH})^{1/2}\big) \subseteq
\dom\big((A + t_1 I_{\cH})^{1/2}\big),     \lb{7.18} \\
& \dom\big((A_0^* + t_1 I_{\cH})^{1/2}\big) \subseteq
\dom\big((A^* + t_1 I_{\cH})^{1/2}\big),     \lb{7.19} \\
& \ol{(A + t_1 I_{\cH})^{1/2} \big[(A + t_1 I_{\cH})^{-1} - (A_0 + t_1 I_{\cH})^{-1}\big]
	(A + t_1 I_{\cH})^{1/2}} \in \cB_1(\cH).    \lb{7.20}
\end{align}
Then 
\begin{align}
\begin{split}
& - \f{d}{dz} \ln\Big({\det}_{\cH}\Big(\ol{(A - z I_{\cH})^{1/2}(A_0 - z I_{\cH})^{-1}
	(A - z I_{\cH})^{1/2}}\Big)\Big)   \\
&\quad = {\tr}_{\cH}\big((A - z I_{\cH})^{-1} - (A_0 - z I_{\cH})^{-1}\big),     \lb{7.21}
\end{split}
\end{align}
for all $z \in \bbC \backslash \big(\ol{- t_0 + S_{\omega_0}}\big)$ such that 
$\ol{(A - z I_{\cH})^{1/2}(A_0 - z I_{\cH})^{-1}(A - z I_{\cH})^{1/2}}$ is boundedly invertible. 
(Analogous trace formulas can be derived for modified Fredholm determinants replacing 
$\cB_1(\cH)$ by $\cB_k(\cH)$, $k \in \bbN$, $k \geq 2$, including additional terms under 
the trace.) 

Here $A$ is said to be of {\it positive-type} if
\begin{align}
(-\infty, 0] \subset \rho(A) \, \text{ and } \, 
M_A = \sup_{t \geq 0} \big\|(1 + t) (A + t I_{\cH})^{-1}\big\|_{\cB(\cH)} < \infty,   \lb{7.7} 
\end{align}
and $A$ is called {\it sectorial of angle $\omega \in [0,\pi)$}, denoted by
$A \in \Sect(\omega)$, if
\begin{align}
\sigma(A) \subseteq \ol{S_{\omega}}  \, 
\text{ and for all $\omega' \in (\omega,\pi)$, }  \,
M(A,\omega') = \sup_{z\in \bbC\backslash \ol{S_{\omega'}}}
\big\| z (A - z I_{\cH})^{-1}\big\|_{\cB(\cH)} < \infty,       \lb{7.8} 
\end{align}
where $S_{\omega} \subset \bbC$, $\omega \in [0,\pi)$, denotes the open sector
\begin{equation}
S_{\omega} = \begin{cases} \{z\in\bbC \,|\, z \neq 0, \, |\arg(z)|<\omega\},
& \omega \in (0,\pi), \\
(0,\infty), & \omega = 0,
\end{cases}
\end{equation}
with vertex at $z=0$ along the positive real axis and opening angle $2 \omega$. Assumptions 
such as \eqref{7.18}, \eqref{7.19} make it plain that control over square root domains is at the core 
for the validity of the trace formula \eqref{7.21}.  

In the special case where in addition $A$ and $A_0$ are self-adjoint in $\cH$, trace relations of 
the type \eqref{7.21} are intimately related to the notion of the spectral shift function associated 
with the pair $(A,A_0)$, and hence underscore its direct relevance to spectral and scattering 
theory for this pair (cf.\ \cite{GZ12} for details). 
\end{remark}

Next, we briefly turn to a description of the content of each section: Section \ref{s2} contains 
our results in the real line case. Section \ref{s4} is devoted to the half-line $(a,\infty)$, with 
Dirichlet or Neumann boundary conditions at $a \in \bbR$. Our final Section \ref{s3} treats the 
finite interval case $(a,b)$ with Dirichlet and/or Neumann boundary conditions at $a,b \in \bbR$, 
and in the particular case $p=1$ a.e.\ on $(a,b)$, all possible separated boundary conditions 
at $a, b$ are treated. Appendices \ref{sA}--\ref{sC} provide auxiliary results on operator, Trudinger, 
and form bounds used in the bulk of this paper.   

Finally, we denote by $AC([a,b])$ the set of absolutely continuous functions on the interval 
$[a,b] \subset \bbR$, by $W^{1,2}((c,d))$ the usual Sobolev space, and by 
\begin{equation}
L^p_{\locunif}(\bbR; dx) = \bigg\{f \in L^p_{\loc}(\bbR; dx) \,\bigg|\,
\sup_{a\in\bbR} \bigg(\int_a^{a+1} dx \, |f(x)|^p\bigg) < \infty\bigg\}, \quad
p \in [1,\infty),       \lb{2.1a}
\end{equation}
the spaces of locally uniformly $L^p$-integrable functions on $\bbR$.

\section{Sturm--Liouville Operators in $L^2(\bbR;dx)$}  \lb{s2}

\begin{hypothesis}\lb{h2.1}
Suppose that $p$, $q$, $r$, and $s$ satisfy the following conditions: \\
$(i)$ $p:\bbR\rightarrow \bbC$ and there exist constants $0<\lambda\leq \Lambda$, such that 
\begin{equation}\lb{2.1}
\lambda<\Re(p(x)) \, \text{ and } \, |p(x)|\leq \Lambda \, \text{ for a.e.\ $x\in \bbR$.} 
\end{equation}
$(ii)$  $q\in L^1_{\locunif}(\bbR;dx)$ and $r,\, s \in L^2_{\locunif}(\bbR;dx)$.
\end{hypothesis}

Note that $p$, $q$, $r$, and $s$ are not assumed to be real-valued.  Moreover, $q$, $r$, and $s$ need {\it not} be bounded.

Assuming Hypothesis \ref{h2.1}, define sesquilinear forms $\mathfrak{q}_j$, $j\in \{0,1,2,3\}$, as follows:
\begin{align}
& \mathfrak{q}_0(f,g) = \int_{\bbR}dx\, \overline{f'(x)}p(x)g'(x),\lb{2.2}\\
& \mathfrak{q}_1(f,g) = \int_{\bbR}dx\, \overline{f(x)}r(x)g'(x),\lb{2.3}\\
& \mathfrak{q}_2(f,g) = \int_{\bbR}dx\, \overline{f'(x)}s(x)g(x),\lb{2.4}\\
& \quad f,g\in \dom(\mathfrak{q}_0)=\dom(\mathfrak{q}_1)=\dom(\mathfrak{q}_2)=W^{1,2}(\bbR),\no\\
& \mathfrak{q}_3(f,g) = \int_{\bbR}dx\, \overline{f(x)}q(x)g(x),\lb{2.5}\\
& \quad f,g\in \dom(\mathfrak{q}_3)=\big\{h\in L^2(\bbR;dx)\, \big|\, |q|^{1/2}h\in L^2(\bbR;dx) \big\}.\no
\end{align}

Throughout this manuscript, we follow the terminology and conventions for sesquilinear forms set forth in \cite[Ch.~6]{Ka80}.  One notes that the sesquilinear form $\mathfrak{q}_0$ is actually a closed sectorial sesquilinear form.

\begin{proposition}\lb{p2.2}
Assume Hypothesis \ref{h2.1} and let $\mathfrak{q}_j$, $j\in \{0,1,2,3\}$, defined as in \eqref{2.2}--\eqref{2.5}.  The following items hold:\\
$(i)$  $\mathfrak{q}_0$ is a densely defined, sectorial, and closed sesquilinear form.\\
$(ii)$  Each of the sesquilinear forms $\mathfrak{q}_j$, $j\in \{1,2,3\}$, is infinitesimally bounded with respect to $\mathfrak{q}_0$.  In particular, there exist constants $\e_0 > 0$ and $M > 0$ such that
\begin{equation}\lb{2.6}
\begin{split}
|\mathfrak{q}_j(f,f)|\leq \e \Re(\mathfrak{q}_0(f,f)) + M\e^{-3}\|f\|_{L^2(\bbR;dx)}^2,&\\
f\in W^{1,2}(\bbR), \; 0<\e<\e_0, \, j\in \{1,2,3\}.&
\end{split}
\end{equation}
\end{proposition}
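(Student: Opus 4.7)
The plan is to handle (i) and (ii) separately, with essentially all the work in the infinitesimal bound (ii). For (i), density of $W^{1,2}(\bbR)$ in $L^2(\bbR;dx)$ is immediate. Sectoriality is a direct consequence of Hypothesis \ref{h2.1}(i): one has $\Re(\mathfrak{q}_0(f,f)) \geq \lambda\|f'\|_{L^2}^2$ and $|\Im(\mathfrak{q}_0(f,f))| \leq \Lambda\|f'\|_{L^2}^2 \leq (\Lambda/\lambda)\Re(\mathfrak{q}_0(f,f))$, so the numerical range lies in the sector of opening $2\arctan(\Lambda/\lambda)$ about the positive real axis. Closedness follows because the form norm $[\Re(\mathfrak{q}_0(f,f)) + \|f\|_{L^2}^2]^{1/2}$ is equivalent, with constants depending only on $\lambda$ and $\Lambda$, to the usual $W^{1,2}(\bbR)$-norm, and $W^{1,2}(\bbR)$ is a Banach space.

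For (ii) the engine is the scaled one-dimensional Sobolev embedding: for each $\delta \in (0,1]$ and each interval $I$ of length $\delta$ there is an absolute constant $C_0$ such that
\begin{equation*}
\|f\|_{L^\infty(I)}^2 \leq C_0\bigl(\delta\|f'\|_{L^2(I)}^2 + \delta^{-1}\|f\|_{L^2(I)}^2\bigr),\quad f \in W^{1,2}(I),
\end{equation*}
which is proved by writing $f(x) = f(y) + \int_y^x f'(t)\,dt$, averaging in $y \in I$, and applying Cauchy--Schwarz. One then partitions $\bbR$ into a disjoint union of intervals $\{I_n^{\delta}\}_{n\in\bbZ}$ of length $\delta$, applies this estimate pointwise, and uses $\|q\|_{L^1(I_n^{\delta})} \leq \|q\|_{L^1_{\locunif}(\bbR;dx)}$ and $\|r\|_{L^2(I_n^{\delta})}^2 \leq \|r\|_{L^2_{\locunif}(\bbR;dx)}^2$, which are valid for $\delta \leq 1$. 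Summing over $n\in\bbZ$ yields
\begin{equation*}
\int_{\bbR} |q| \, |f|^2\,dx + \int_{\bbR} |r|^2 |f|^2\,dx \leq C_1\bigl(\delta\|f'\|_{L^2}^2 + \delta^{-1}\|f\|_{L^2}^2\bigr),
\end{equation*}
with $C_1$ depending on the relevant locally uniform norms of $q$, $r$, $s$. This directly controls $\mathfrak{q}_3$. For $\mathfrak{q}_1$ (and symmetrically $\mathfrak{q}_2$), Cauchy--Schwarz gives $|\mathfrak{q}_1(f,f)| \leq (\int|r|^2|f|^2)^{1/2}\|f'\|_{L^2}$, and a further Young's inequality applied to the resulting cross term $\|f\|_{L^2}\|f'\|_{L^2}$ converts the above into
\begin{equation*}
|\mathfrak{q}_1(f,f)| \leq C_2\bigl(\delta^{1/2}\|f'\|_{L^2}^2 + \delta^{-3/2}\|f\|_{L^2}^2\bigr).
\end{equation*}

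To close, invoke $\|f'\|_{L^2}^2 \leq \lambda^{-1}\Re(\mathfrak{q}_0(f,f))$ and choose $\delta$ as a function of $\e$: for $\mathfrak{q}_1, \mathfrak{q}_2$ take $\delta \sim \e^2$, so the coefficient of $\Re(\mathfrak{q}_0)$ equals $\e$ while the remaining term scales precisely as $\e^{-3}\|f\|_{L^2}^2$, delivering \eqref{2.6}. For $\mathfrak{q}_3$ the natural choice $\delta \sim \e$ yields the stronger estimate $\e\Re(\mathfrak{q}_0(f,f)) + M\e^{-1}\|f\|_{L^2}^2$, which trivially implies the claimed $\e^{-3}$ form for $\e \leq \e_0$. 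The threshold $\e_0$ corresponds to the constraint $\delta = 1$. The only nontrivial step in the whole argument is the scaling device: without letting the partition scale shrink with $\e$ one would only obtain relative (non-infinitesimal) form boundedness, which would be insufficient for the eventual identification of square root domains. This $L^\infty$-bound on $W^{1,2}(I)$ is in fact one of the auxiliary results recorded in Appendix \ref{sC}.
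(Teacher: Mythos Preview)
Your proof is correct and follows essentially the same route as the paper's. Part (i) is identical. For (ii), the paper quotes the relative boundedness estimates $\|wf\|_{L^2}^2 \leq \kappa C_w\|f'\|_{L^2}^2 + 2\kappa^{-1}C_w\|f\|_{L^2}^2$ for $w\in\{|q|^{1/2},r,s\}$ from Schechter/Simon (which are themselves proved via the Trudinger inequality of Appendix~\ref{sC} on unit intervals, with the free parameter $\kappa$ already built in), then performs exactly the same Cauchy--Schwarz and Young's inequality manipulations you describe to reach the $\e^{-3}$ bound. The only cosmetic difference is how the small parameter enters: you shrink the partition length $\delta$, whereas the paper keeps unit intervals and uses the parameter in the Trudinger estimate; these are equivalent. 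Your remark that ``without letting the partition scale shrink\dots one would only obtain relative (non-infinitesimal) form boundedness'' is therefore slightly overstated---the Trudinger inequality on a \emph{fixed} interval already carries a free $\e$---but this does not affect the validity of your argument.
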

\begin{proof}
Evidently, $\mathfrak{q}_0$ is densely defined as $W^{1,2}(\bbR)$ is dense in $L^2(\bbR;dx)$.  To show that $\mathfrak{q}_0$ is sectorial, one notes that the boundedness assumption on $p$ in \eqref{2.1} implies
\begin{equation}\lb{2.7}
|\Im(\mathfrak{q}_0(f,f))|\leq \int_{\bbR}dx\, |\Im(p(x))||f'(x)|^2\leq \Lambda \int_{\bbR}dx\, |f'(x)|^2, \quad f\in W^{1,2}(\bbR),
\end{equation}
while the first inequality in \eqref{2.1} implies
\begin{equation}\lb{2.8}
\int_{\bbR}dx\, |f'(x)|^2\leq \lambda^{-1}\Re(\mathfrak{q}_0(f,f)), \quad f\in W^{1,2}(\bbR).
\end{equation}
The inequalities in \eqref{2.7} and \eqref{2.8} combine to yield
\begin{equation}\lb{2.9}
|\Im(\mathfrak{q}_0(f,f))|\leq\frac{\Lambda}{\lambda}\, \Re(\mathfrak{q}_0(f,f)),\quad f\in W^{1,2}(\bbR),
\end{equation}
and it follows that $\mathfrak{q}_0$ is sectorial.

To show that $\mathfrak{q}_0$ is closed requires one to prove that $\dom(\mathfrak{q}_0)=W^{1,2}(\bbR)$, equipped with the norm $\|\cdot\|_{\mathfrak{q}_0}$ defined by
\begin{equation}\lb{2.10}
\|f\|_{\mathfrak{q}_0}^2=\Re(\mathfrak{q}_0(f,f))+\|f\|_{L^2(\bbR;dx)}^2,\quad f\in \dom(\mathfrak{q}_0)=W^{1,2}(\bbR),
\end{equation}
is closed.  Under the assumption of Hypothesis \ref{h2.1}$(i)$, $\|\cdot\|_{\mathfrak{q}_0}$ and $\|\cdot\|_{W^{1,2}(\bbR)}$ are actually equivalent norms.  In fact, using \eqref{2.1}, one establishes the following elementary estimate:
\begin{equation}\lb{2.11}
\begin{split}
\min\{\lambda,1\}\|f\|_{W^{1,2}(\bbR)}^2\leq \Re(\mathfrak{q}_0(f,f))+\|f\|_{L^2(\bbR;dx)}^2\leq \max\{\Lambda,1\} \|f\|_{W^{1,2}(\bbR)}^2,&\\
 f\in \dom(\mathfrak{q}_0)=W^{1,2}(\bbR).&
\end{split}
\end{equation}
Consequently, $W^{1,2}(\bbR)$ is closed with respect to $\|\cdot\|_{\mathfrak{q}_0}$ (since it is closed with respect to $\|\cdot\|_{W^{1,2}(\bbR)}$, and these two norms are equivalent).  As a result, $\mathfrak{q}_0$ is closed, and the proof of item $(i)$ is complete.

In order to prove item $(ii)$, one recalls the fundamental result that any complex-valued function belonging to $L^2_{\locunif}(\bbR;dx)$ is infinitesimally bounded with respect to $d/dx$ defined on $W^{1,2}(\bbR)$ 
in $L^2(\bbR;dx)$ (cf., e.g., \cite[Theorem\ 2.7.1]{Sc81}, \cite[p.\ 35]{Si71}). In fact, we note in passing, this condition is not only sufficient, but also necessary for relative (as well as, relative infinitesimal) boundedness  (see also the survey in \cite{GW14}).  In particular, since 
$|q|^{1/2},\, r,\, s\in L^2_{\locunif}(\bbR;dx)$, \cite[Theorem 2.7.1]{Sc81} and \cite[p.\ 35]{Si71} imply the following form bounds:
\begin{align}
\|rf\|_{L^2(\bbR;dx)}^2&\leq \kappa C_r\|f'\|_{L^2(\bbR;dx)}^2+2\kappa^{-1}C_r\|f\|_{L^2(\bbR;dx)}^2,\lb{2.12}\\
\|sf\|_{L^2(\bbR;dx)}^2&\leq \kappa C_s\|f'\|_{L^2(\bbR;dx)}^2+2\kappa^{-1}C_s\|f\|_{L^2(\bbR;dx)}^2,\lb{2.13}\\
\big\||q|^{1/2}f\big\|_{L^2(\bbR;dx)}^2&\leq \kappa C_q\|f'\|_{L^2(\bbR;dx)}^2+2\kappa^{-1}C_q\|f\|_{L^2(\bbR;dx)}^2,\lb{2.14}\\
&\hspace*{2.5cm} f\in W^{1,2}(\bbR), \; 0<\kappa<1,\no
\end{align}
where
\begin{equation}\lb{2.15}
C_q=\sup_{a\in \bbR}\int_a^{a+1}dx\, |q(x)|,\quad C_r=\sup_{a\in \bbR}\int_a^{a+1}dx\, |r(x)|^2,\quad C_s=\sup_{a\in \bbR}\int_a^{a+1}dx\, |s(x)|^2.
\end{equation}
We assume without loss that $C_rC_sC_q\neq 0$; otherwise, the problem simplifies.  Subsequently, \eqref{2.12}--\eqref{2.14} imply
\begin{align}
\|rf\|_{L^2(\bbR;dx)}&\leq \e \|f'\|_{L^2(\bbR;dx)}+C_0\e^{-1}\|f\|_{L^2(\bbR;dx)},\lb{2.16}\\
\|sf\|_{L^2(\bbR;dx)}&\leq \e \|f'\|_{L^2(\bbR;dx)}+C_0\e^{-1}\|f\|_{L^2(\bbR;dx)},\lb{2.17}\\
\big\||q|^{1/2}f\big\|_{L^2(\bbR;dx)}^2&\leq \e\|f'\|_{L^2(\bbR;dx)}^2+C_0\e^{-1}\|f\|_{L^2(\bbR;dx)}^2,\lb{2.18}\\
&\hspace*{1.3cm}f\in W^{1,2}(\bbR),\; 0<\e<\e_{q,r,s},\no
\end{align}
where
\begin{equation}\lb{2.19}
C_0=\sqrt{2}\cdot \max\big\{1,C_r,C_s,\sqrt{2}C_q^2\big\} \, \text{ and } 
\, \e_{q,r,s}=\min\big\{\sqrt{C_r},\sqrt{C_s},C_q \big\}.
\end{equation}
The estimate in \eqref{2.16} (resp., \eqref{2.17}) follows by taking square roots throughout \eqref{2.12} (resp., \eqref{2.13}) and choosing $\e=\sqrt{\kappa C_r}$ (resp., $\e=\sqrt{\kappa C_s}$).  On the other hand, \eqref{2.18} follows from \eqref{2.14} by choosing $\e=\kappa C_q$.  Note the constant $C_0$ is introduced to provide uniformity throughout \eqref{2.16}--\eqref{2.18}, allowing one to dispense with keeping track of the three separate constants $C_q$, $C_r$, and $C_s$.  Consequently, one estimates
\begin{align}
&|\mathfrak{q}_j(f,f)|\no\\
&\quad \leq \big\||q|^{1/2}f\big\|_{L^2(\bbR;dx)}^2+\big|(rf,f')_{L^2(\bbR;dx)}\big|+\big|(f',sf)_{L^2(\bbR;dx)}\big|\no\\
&\quad \leq \big\||q|^{1/2}f\big\|_{L^2(\bbR;dx)}^2+\|rf\|_{L^2(\bbR;dx)}\|f'\|_{L^2(\bbR;dx)}+\|f'\|_{L^2(\bbR;dx)}\|sf\|_{L^2(\bbR;dx)}\no\\
&\quad \leq 3\e\|f'\|_{L^2(\bbR;dx)}^2+C_0\e^{-1}\|f\|_{L^2(\bbR;dx)}^2+2C_0\e^{-1}\|f\|_{L^2(\bbR;dx)}\|f'\|_{L^2(\bbR;dx)},\lb{2.20}\\
&\hspace*{4.85cm}f\in W^{1,2}(\bbR),\; 0<\e<\e_{q,r,s},\, j\in \{1,2,3\}.\no
\end{align}
Using the elementary inequality
\begin{equation}\lb{2.21}
\begin{split}
2C_0\e^{-1}\|f\|_{L^2(\bbR;dx)}\|f'\|_{L^2(\bbR;dx)}\leq \e\|f'\|_{L^2(\bbR;dx)}^2+C_0^2\e^{-3}\|f\|_{L^2(\bbR;dx)}^2,&\\
 f\in W^{1,2}(\bbR), \; \e>0,&
 \end{split}
\end{equation}
which follows from
\begin{equation}\lb{2.22}
\big(\e^{1/2}\|f'\|_{L^2(\bbR;dx)}-C_0\e^{-3/2}\|f\|_{L^2(\bbR;dx)} \big)^2\geq 0, \quad f\in W^{1,2}(\bbR),\; \e>0,
\end{equation}
one continues the estimate in \eqref{2.20}, using \eqref{2.8} to obtain
\begin{align}
|\mathfrak{q}_j(f,f)|&\leq 4\e \|f'\|_{L^2(\bbR;dx)}^2+\big(C_0\e^{-1}+C_0^2\e^{-3}\big)\|f\|_{L^2(\bbR;dx)}^2\no\\
&\leq 4\lambda^{-1}\e\Re(\mathfrak{q}_0(f,f))+\big(C_0\e^{-1}+C_0^2\e^{-3}\big)\|f\|_{L^2(\bbR;dx)}^2,\lb{2.23}\\
&\hspace*{1.85cm}f\in W^{1,2}(\bbR),\; 0<\e<\e_{q,r,s},\, j\in\{1,2,3\}.\no
\end{align}
Subsequently, rescaling $\e$ throughout \eqref{2.23} yields
\begin{align}
|\mathfrak{q}_j(f,f)|& \leq \e\Re(\mathfrak{q}_0(f,f))+\big[4C_0\lambda^{-1}\e^{-1}+64C_0^2\lambda^{-3}\e^{-3} \big]\|f\|_{L^2(\bbR;dx)}^2\no\\
&\leq \e\Re(\mathfrak{q}_0(f,f))+128C_0^2\big(\lambda^{-1}+\lambda^{-3}\big)\e^{-3}\|f\|_{L^2(\bbR;dx)}^2,\lb{2.24}\\
&\hspace*{-.175cm}f\in W^{1,2}(\bbR),\; 0<\e<\min\{1,4\lambda^{-1}\e_{q,r,s}\},\, j\in \{1,2,3\},\no
\end{align}
upon using $C_0<C_0^2$ and $\e^{-1}<\e^{-3}$ if $0<\e<1$.  This completes the proof of item $(ii)$ and establishes the form bound in \eqref{2.6} with, say,
\begin{equation}\lb{2.25}
M=128C_0^2\big(\lambda^{-1}+\lambda^{-3} \big) \, \text{ and } \, \e_0=\min\{1, 4\lambda^{-1}\e_{q,r,s}\}.
\end{equation}
\end{proof}

By Proposition \ref{p2.2} and \cite[Theorem VI.1.33]{Ka80}, the sesquilinear form
\begin{equation}\lb{2.26}
\mathfrak{q}(f,g):=\sum_{j=0}^3\mathfrak{q}_j(f,g), \quad \dom(\mathfrak{q}) = W^{1,2}(\bbR),
\end{equation}
is a densely defined, closed, sectorial sesquilinear form in $L^2(\bbR;dx)$.  Therefore, $\mathfrak{q}$ is uniquely associated to an m-sectorial operator by the 1st representation theorem \cite[Theorem VI.2.1]{Ka80}, and we denote this operator by $L_{p,q,r,s}$.  Formally speaking, $L_{p,q,r,s}$ takes the form
\begin{equation}\lb{2.27}
L_{p,q,r,s} = -\frac{d}{dx}p\frac{d}{dx}+r\frac{d}{dx}-\frac{d}{dx}s+q.
\end{equation}
We recall the following fundamental results:

\begin{theorem}\lb{t2.3}$($\cite[Thm.~3.1]{AMN97}, \cite{KM85}$)$.
If $p\in L^{\infty}(\bbR;dx)$ and $\Re(p(x))>\lambda>0$ for a.e.\ $x\in \bbR$ and some $\lambda>0$, then
\begin{equation}\lb{2.28}
\dom\big(L_{p,0,0,0}^{1/2} \big) = \dom\big(\big(L_{p,0,0,0}^*\big)^{1/2} \big) = W^{1,2}(\bbR).
\end{equation} 
\end{theorem}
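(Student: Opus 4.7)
The plan is to use the Kato--Lions theorem (Theorem \ref{t1.2}) to collapse the three-way equality in \eqref{2.28} to the single identity $\dom(L_{p,0,0,0}^{1/2}) = W^{1,2}(\bbR)$. By Proposition \ref{p2.2}$(i)$, $\mathfrak{q}_0$ is a densely defined, closed, sectorial form with $\dom(\mathfrak{q}_0) = W^{1,2}(\bbR)$, and $L_{p,0,0,0}$ is the associated m-sectorial operator; hence once this single identity is in hand, Theorem \ref{t1.2} automatically delivers $\dom\big((L_{p,0,0,0}^*)^{1/2}\big) = W^{1,2}(\bbR)$ as well. Via the closed graph theorem, the desired identity is in turn equivalent to the two-sided Kato estimate
\begin{equation*}
c\, \|f'\|_{L^2(\bbR)} \leq \big\|L_{p,0,0,0}^{1/2} f\big\|_{L^2(\bbR)} \leq C\, \|f'\|_{L^2(\bbR)}, \quad f \in W^{1,2}(\bbR),
\end{equation*}
with $c,C > 0$ depending only on $\lambda$ and $\Lambda$.

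The upper bound is the easy direction and I would dispose of it first. Heinz--Kato--Lions-type estimates for m-sectorial operators (available since $\mathfrak{q}_0$ has sector angle at most $\arctan(\Lambda/\lambda) < \pi/2$ by Proposition \ref{p2.2}$(i)$) yield $\|L_{p,0,0,0}^{1/2} f\|_{L^2(\bbR)}^2 \leq M\, |\mathfrak{q}_0(f,f)|$ for $f \in \dom(L_{p,0,0,0})$. Combined with the trivial bound $|\mathfrak{q}_0(f,f)| \leq \Lambda \|f'\|_{L^2(\bbR)}^2$ and the fact that $\dom(L_{p,0,0,0})$ is a form core for $\mathfrak{q}_0$, a standard density/closedness argument extends the estimate to all $f \in W^{1,2}(\bbR)$, giving $W^{1,2}(\bbR) \subseteq \dom(L_{p,0,0,0}^{1/2})$ together with the upper half of the Kato estimate.

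The reverse inequality $\|f'\|_{L^2(\bbR)} \leq C\, \|L_{p,0,0,0}^{1/2} f\|_{L^2(\bbR)}$ is the main obstacle and is equivalent to $L^2$-boundedness of the Riesz transform $\tfrac{d}{dx}\, L_{p,0,0,0}^{-1/2}$. The plan is to represent $L_{p,0,0,0}^{-1/2}$ via the Balakrishnan integral \eqref{1.11} (applied to $L_{p,0,0,0}^{-1}$ with $\alpha = 1/2$), and then to reduce the Riesz transform bound, through the McIntosh--Yagi $H^{\infty}$-functional-calculus machinery, to the quadratic (square-function) estimate
\begin{equation*}
\int_0^{\infty} \Big\| t\, \tfrac{d}{dx}\, \big(I + t^2 L_{p,0,0,0}\big)^{-1} g \Big\|_{L^2(\bbR)}^2 \, \frac{dt}{t} \leq C\, \|g\|_{L^2(\bbR)}^2, \quad g \in L^2(\bbR).
\end{equation*}
Verifying this square-function estimate is the deep one-dimensional Kato square root problem: one combines Davies--Gaffney off-diagonal bounds on $(I + t^2 L_{p,0,0,0})^{-1}$ (which depend only on $\lambda$ and $\Lambda$ and require no smoothness of $p$) with a $T(b)$-type stopping-time argument using the paraaccretive test function $b = 1/p$. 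Equivalently, one may simply invoke the Coifman--McIntosh--Meyer theorem on the $L^2$-boundedness of the Cauchy integral on a Lipschitz graph, which is well-known to be equivalent, via a conformal change of variable, to the Kato estimate for $-\tfrac{d}{dx}\, p\, \tfrac{d}{dx}$ with bounded complex accretive $p$. Once the lower estimate is secured, the two-sided Kato inequality gives $\dom(L_{p,0,0,0}^{1/2}) = W^{1,2}(\bbR)$, and Theorem \ref{t1.2} completes the proof.
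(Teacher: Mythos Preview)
The paper does not prove Theorem~\ref{t2.3}; it is quoted from the literature (\cite[Thm.~3.1]{AMN97}, \cite{KM85}) and used as a black-box input for the paper's own results (Theorem~\ref{t2.5}). Your proposal therefore goes beyond what the paper itself does by sketching the actual proof from those references, and your outline is essentially the correct one: the inclusion $W^{1,2}(\bbR)\subseteq\dom(L_{p,0,0,0}^{1/2})$ with the bound $\|L_{p,0,0,0}^{1/2}f\|\leq C\|f'\|$ follows from Kato's sectorial estimates, while the hard direction is the $L^2$-boundedness of the Riesz transform $\tfrac{d}{dx}\,L_{p,0,0,0}^{-1/2}$, which one settles either via square-function/$T(b)$ methods (the Auscher--McIntosh--Nahmod route in \cite{AMN97}) or by the known equivalence with the Coifman--McIntosh--Meyer theorem on the Cauchy integral (the Kenig--Meyer route \cite{KM85}, \cite{CMM82}). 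The reduction to a single inclusion via Theorem~\ref{t1.2} is also correct.

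One small technical slip: the Balakrishnan formula \eqref{1.11} represents $T^{\alpha}$ for m-accretive $T$, not $T^{-\alpha}$, and $L_{p,0,0,0}$ is not boundedly invertible on $L^2(\bbR)$ (zero lies in its spectrum), so ``applied to $L_{p,0,0,0}^{-1}$ with $\alpha=1/2$'' does not parse. What you want is the standard representation $L^{-1/2}=\pi^{-1}\int_0^{\infty}t^{-1/2}(L+tI)^{-1}\,dt$, interpreted on $\ran(L^{1/2})$, or else work with $L+\varepsilon I$ throughout and let $\varepsilon\downarrow 0$. This is cosmetic and does not affect the overall strategy.
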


Theorem \ref{t2.3} is used as a basic input in the principal result of this section, Theorem \ref{t2.5} below, which states that the square root domain of $L_{p,q,r,s}$ is actually independent of the coefficients $p$, $q$, $r$, and $s$, provided these coefficients satisfy the assumptions in Hypothesis \ref{h2.1}.

\begin{theorem}\lb{t2.5}
Assume Hypothesis \ref{h2.1}, then
\begin{equation}\lb{2.33}
\dom\big(L_{p,q,r,s}^{1/2}\big) = \dom\big(\big(L_{p,q,r,s}^*\big)^{1/2} \big) = W^{1,2}(\bbR).
\end{equation} 
\end{theorem}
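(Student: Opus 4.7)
The plan is to apply the abstract perturbation machinery from \cite{GHN13} summarized in \eqref{1.23}--\eqref{1.25}, using Theorem \ref{t2.3} as the base case and Proposition \ref{p2.2} to verify the perturbation hypotheses. The decomposition $\mathfrak{q} = \mathfrak{q}_0 + (\mathfrak{q}_1 + \mathfrak{q}_2 + \mathfrak{q}_3)$ is ideally suited to this framework: $\mathfrak{q}_0$ defines the ``unperturbed'' operator $T_0 := L_{p,0,0,0}$, whose square root domains are known via Theorem \ref{t2.3}, while the remaining three forms supply the ``appropriate perturbation'' $W$.

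First, by Proposition \ref{p2.2}(i), $T_0 = L_{p,0,0,0}$ is the m-sectorial operator associated to the closed sectorial form $\mathfrak{q}_0$ via the 1st Representation Theorem. Since $p$ satisfies the boundedness and uniform-accretivity condition \eqref{2.1} required by Theorem \ref{t2.3} (which originates in \cite{AMN97}, \cite{KM85}), one obtains immediately
\begin{equation*}
\dom\big(T_0^{1/2}\big) \,=\, \dom\big((T_0^*)^{1/2}\big) \,=\, W^{1,2}(\bbR),
\end{equation*}
which is precisely the abstract hypothesis \eqref{1.25}.

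Second, Proposition \ref{p2.2}(ii) supplies the quantitative estimate \eqref{2.6} showing that each $\mathfrak{q}_j$, $j \in \{1,2,3\}$, is infinitesimally $\mathfrak{q}_0$-form-bounded. This is exactly the ``appropriate perturbation'' hypothesis required by the abstract results of \cite{GHN13}, and simultaneously it recovers, via \cite[Thm.~VI.1.33]{Ka80}, that the total form $\mathfrak{q}$ is itself densely defined, closed, and sectorial (as already recorded in \eqref{2.26}), with associated m-sectorial operator $L_{p,q,r,s}$. Invoking the abstract conclusion \eqref{1.24} of \cite{GHN13} with $T_0$ and with $W$ corresponding to the form-perturbation $\mathfrak{q}_1 + \mathfrak{q}_2 + \mathfrak{q}_3$ then yields
\begin{equation*}
\dom\big(L_{p,q,r,s}^{1/2}\big) \,=\, \dom\big(T_0^{1/2}\big) \,=\, \dom\big((T_0^*)^{1/2}\big) \,=\, \dom\big(\big(L_{p,q,r,s}^*\big)^{1/2}\big),
\end{equation*}
which together with the base-case identity $\dom(T_0^{1/2}) = W^{1,2}(\bbR)$ is exactly \eqref{2.33}.

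In this plan the real substance is concentrated in Proposition \ref{p2.2}(ii), whose content is the promotion of the $L^2_{\locunif}$-integrability of $|q|^{1/2}, r, s$ to genuine infinitesimal form bounds via the Stummel-type estimates cited from \cite[Thm.~2.7.1]{Sc81} and \cite[p.~35]{Si71}. The only subtlety in invoking the abstract result of \cite{GHN13} is that $W$ need not be interpreted as an operator but as a form perturbation; one therefore appeals to the form version of the abstract theorem, which is the version developed in \cite{GHN13} for precisely applications of this type. Once that interpretation is in place no further computation is required: the proof collapses to the verification-and-invocation sketched above.
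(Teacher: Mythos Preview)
Your proposal has a genuine gap: infinitesimal form-boundedness of $\mathfrak{q}_1+\mathfrak{q}_2+\mathfrak{q}_3$ relative to $\mathfrak{q}_0$ is \emph{not} what the abstract theorem from \cite{GHN13} actually requires. The relevant hypotheses (Hypothesis~\ref{hd.3} and Corollary~\ref{cd.5a}) demand a factorization $W=B^*A$ together with the decay condition
\[
\lim_{E\to\infty}\big\|A(T_0+EI_{\cH})^{-1/2}\big\|_{\cB(\cH,\cK)}\,\big\|\overline{(T_0+EI_{\cH})^{-1/2}B^*}\big\|_{\cB(\cK,\cH)}=0,
\]
not merely relative form boundedness. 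If you try to treat $\mathfrak{q}_1+\mathfrak{q}_2+\mathfrak{q}_3$ in one shot, the natural factorization over $L^2(\bbR;dx)^3$ forces \emph{both} $A$ and $B$ to contain a component $f\mapsto f'$ (coming from $\mathfrak{q}_1$ and $\mathfrak{q}_2$ respectively). But $\big\|(d/dx)(L_{p,0,0,0}+EI)^{-1/2}\big\|$ is only uniformly bounded in $E$; it does not decay. Hence neither factor in the product above tends to zero, and the hypothesis of Corollary~\ref{cd.5a} fails.

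The paper's proof circumvents this by a two-step bootstrap: first pass from $L_{p,0,0,0}$ to $L_{p,q,r,0}$ (so that $A_1$ carries the derivative while $B_1$ involves only multiplication by $\ol{r}$ and $|q|^{1/2}$, giving $\|B_1(\cdots)^{-1/2}\|\lesssim E^{-1/4}$), then from $L_{p,q,r,0}$ to $L_{p,q,r,s}$ (now $A_2=-s$ is multiplication and $B_2=d/dx$, so the decay comes from the $A_2$ factor). Each step therefore has exactly one ``bad'' derivative factor paired with one ``good'' decaying multiplication factor, and the required product decay holds. This splitting is essential, not cosmetic; the Remark immediately following Theorem~\ref{t2.5} in the paper explains precisely this obstruction to the one-step approach you propose.
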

\begin{proof}
It suffices to establish $\dom\big(L_{p,q,r,s}^{1/2}\big) = W^{1,2}(\bbR)$ for all $p$, $q$, $r$, and $s$ satisfying the assumptions in Hypothesis \ref{h2.1}.  Then by taking complex conjugates of $p$, $q$, $r$, and $s$, we obtain $\dom\big(\big(L_{p,q,r,s}^*\big)^{1/2} \big)= W^{1,2}(\bbR)$, implying \eqref{2.33}.  We carry out the proof in a two-step process and begin by proving the special case when $s=0$ a.e.\ on $\bbR$.  That is, we will first show that
\begin{equation}\lb{2.35}
\dom\big(L_{p,q,r,0}^{1/2} \big)=W^{1,2}(\bbR),
\end{equation}
and then prove the statement for general $s$ satisfying the assumptions in Hypothesis \ref{h2.1}. 

One recalls that $L_{p,q,r,0}$ was defined via sesquilinear forms.  Alternatively, one can define $L_{p,q,r,0}$ indirectly in terms of its resolvent by 
\begin{align}
&\big(L_{p,q,r,0}-zI_{L^2(\bbR;dx)}\big)^{-1}=\big(L_{p,0,0,0}-zI_{L^2(\bbR;dx)}\big)^{-1}   \no \\
&\quad-\ol{\big(L_{p,0,0,0}-zI_{L^2(\bbR;dx)}\big)^{-1}B_1^*}\bigg[I_{L^2(\bbR;dx)}+\ol{A_1\big(L_{p,0,0,0}-zI_{L^2(\bbR;dx)}\big)^{-1}B_1^*} \bigg]^{-1}\no\\
&\qquad \times A_1\big(L_{p,0,0,0}-zI_{L^2(\bbR;dx)}\big)^{-1},    \lb{2.36} \\
&\qquad \; z\in \Big\{\zeta\in \rho\big(L_{p,0,0,0}\big)\, \Big| \, 1\in \rho\Big(\ol{A_1\big(L_{p,0,0,0}-\zeta I_{L^2(\bbR;dx)}\big)^{-1}B_1^*} \Big)\Big\},\no
\end{align}
where the operators $A_1$ and $B_1$ are defined by 
\begin{align}
\begin{split}\lb{2.37}
&A_1:L^2(\bbR;dx)\rightarrow L^2(\bbR;dx)^2,\quad A_1f=\begin{pmatrix}
f' \\
e^{i\text{Arg}(q)}|q|^{1/2}f
\end{pmatrix},\\
&B_1:L^2(\bbR;dx)\rightarrow L^2(\bbR;dx)^2,\quad B_1f=\begin{pmatrix}
\ol{r}f \\
|q|^{1/2}f
\end{pmatrix},
\end{split}\\
& \quad f\in \dom(A_1)=\dom(B_1)=W^{1,2}(\bbR).\no
\end{align}
Here, 
\begin{equation}
\begin{split}
L^2(\bbR;dx)^2&=\big\{ f = (f_1\quad f_2)^{\top}\, \big|\, f_j\in L^2(\bbR;dx),\, j=1,2\big\},\\
\|f\|_{L^2(\bbR;dx)^2}^2 &= \|f_1\|_{L^2(\bbR;dx)}^2 + \|f_2\|_{L^2(\bbR;dx)}^2,\quad f = (f_1\quad f_2)^{\top}\in L^2(\bbR;dx )^2.
\end{split}
\end{equation}
The identity in \eqref{2.36}, which we call {\it Kato's resolvent identity for $L_{p,q,r,0}$ and $L_{p,0,0,0}$}, is obtained by viewing $L_{p,q,r,0}$ as an additive perturbation of $L_{p,0,0,0}$ by a term which can be factored as $B_1^*A_1$ over the auxiliary Hilbert space $L^2(\bbR;dx)^2$.  Indeed, one notes that the operators $A_1$ and $B_1$ defined in \eqref{2.37} are closed, densely defined, linear operators from $L^2(\bbR;dx)$ into $L^2(\bbR;dx)^2$, with
\begin{equation}\lb{2.38}
\dom\big(L_{p,0,0,0} \big)\subset \dom(A_1) \, \text{ and } \, \dom\big(L_{p,0,0,0}^*\big)\subset \dom(B_1).
\end{equation}
Moreover, an application of \eqref{2.31} with $S=A_1$ and $T=L_{p,0,0,0}$, yields the estimate
\begin{equation}\lb{2.39}
\big\|A_1(L_{p,0,0,0}+EI_{L^2(\bbR;dx)})^{-1/2}\big\|_{\cB(L^2(\bbR;dx),L^2(\bbR;dx)^2)}\leq C_1, \quad E>E_0,
\end{equation}
for an $E$-independent constant, $C_1>0$ and a constant $E_0>0$.  Subsequently, another application of Lemma \ref{l2.4}, along with Theorems \ref{t2.3} and \ref{ta.1}, provides the estimate
\begin{align}
&\big\|B_1(L_{p,0,0,0}^*+EI_{L^2(\bbR;dx)})^{-1/2}f\big\|_{L^2(\bbR;dx)^2}\no\\
& \quad \leq \Big\{\big\||r|(L_{1,0,0,0}+EI_{L^2(\bbR;dx)})^{-1/2} \big\|_{\cB(L^2(\bbR;dx))} \no\\
&\qquad+\big\||q|^{1/2}(L_{1,0,0,0}+EI_{L^2(\bbR;dx)})^{-1/2} \big\|_{\cB(L^2(\bbR;dx))}\Big\}\no\\
&\quad \quad  \times M_1\|f\|_{L^2(\bbR;dx)}\no\\
& \quad \leq C_2E^{-1/4}\|f\|_{L^2(\bbR;dx)},\quad f\in L^2(\bbR;dx),\; E>E_0',\lb{2.40}
\end{align}
where $C_2>0$ is an $E$-independent constant, $M_1>0$ is a constant for which
\begin{equation}\lb{2.41}
\begin{split}
\big\|(L_{1,0,0,0}+EI_{L^2(\bbR;dx)})^{1/2}(L_{p,0,0,0}^*+EI_{L^2(\bbR;dx)})^{-1/2} \big\|_{\cB(L^2(\bbR;dx))}\leq M_1,&\\
E\geq 1,&
\end{split}
\end{equation}
and $E_0'$ is the constant guaranteed to exist by Theorem \ref{ta.1}.
Consequently, \eqref{2.40} implies the norm bound
\begin{equation}\lb{2.42}
\big\|B_1(L_{p,0,0,0}^*+EI_{L^2(\bbR;dx)})^{-1/2}\big\|_{\cB(L^2(\bbR;dx),\cB(L^2(\bbR;dx)^2)}\leq C_2E^{-1/4},\quad E>E_0',
\end{equation}
and it follows that 
\begin{align}
&\Big\|\ol{A_1(L_{p,0,0,0}+EI_{L^2(\bbR;dx)})^{-1}B_1^*}\Big\|_{\cB(L^2(\bbR;dx)^2)}\no\\
&\quad \leq \big\|A_1(L_{p,0,0,0}+EI_{L^2(\bbR;dx)})^{-1/2}\big\|_{\cB(L^2(\bbR;dx),L^2(\bbR;dx)^2)}\no\\
&\quad \quad \times \Big\|\ol{(L_{p,0,0,0}+EI_{L^2(\bbR;dx)})^{-1/2}B_1^*}
\Big\|_{\cB(L^2(\bbR;dx)^2,L^2(\bbR;dx))}\no\\
&\quad \leq C_1C_2E^{-1/4}, \quad E>\max\{E_0,E_0'\}.\lb{2.43}
\end{align}
As a result, one infers that
\begin{equation}\lb{2.44}
1\in \rho\big(\ol{A_1(L_{p,0,0,0}+EI_{L^2(\bbR;dx)})^{-1}B_1^*} \big),\quad E>\max\{E_0,E_0'\},
\end{equation}
so that the set appearing at the end of \eqref{2.36} is nonempty. By \cite[Theorem 2.3]{GLMZ05}, the expression on the right-hand 
side of the equality in \eqref{2.36} is the resolvent of a densely defined, closed, linear operator in $L^2(\bbR;dx)$. 
This operator coincides with $L_{p,q,r,0}$ by Theorem \ref{te.3}. In view of \eqref{2.43} and 
the fact that
\begin{equation}\lb{2.45}
\dom\big(L_{p,0,0,0}^{1/2} \big)=\dom(A_1)=\dom\big(\big(L_{p,0,0,0}^* \big)^{1/2}\big)=\dom(B_1),
\end{equation}
the statement
\begin{equation}\lb{2.46}
\dom\big(L_{p,q,r,0}^{1/2} \big)=\dom\big(L_{p,0,0,0}^{1/2} \big),
\end{equation}
follows from an application of Corollary \ref{cd.5a}, and \eqref{2.35} follows from Theorem \ref{t2.3}.  This completes the proof of the result in the special case $s=0$ a.e.\ on $\bbR$.

In order to establish the general case, we next prove
\begin{equation}\lb{2.47}
\dom\big(L_{p,q,r,s}^{1/2} \big) = \dom\big(L_{p,q,r,0}^{1/2} \big).
\end{equation}
The claim $\dom\big(L_{p,q,r,s}^{1/2}\big) = W^{1,2}(\bbR)$ will then follow from \eqref{2.35} and \eqref{2.46}.  In order 
to show \eqref{2.47}, we note that a resolvent identity similar to \eqref{2.36} holds between $L_{p,q,r,s}$ 
and $L_{p,q,r,0}$.  More specifically, 
\begin{align}
&\big(L_{p,q,r,s}-zI_{L^2(\bbR;dx)}\big)^{-1}=\big(L_{p,q,r,0}-zI_{L^2(\bbR;dx)}\big)^{-1}    \no \\
&\quad-\ol{\big(L_{p,q,r,0}-zI_{L^2(\bbR;dx)}\big)^{-1}B_2^*}\bigg[I_{L^2(\bbR;dx)}+\ol{A_2\big(L_{p,q,r,0}-zI_{L^2(\bbR;dx)}\big)^{-1}B_2^*} \bigg]^{-1}\no\\
&\qquad \times A_2\big(L_{p,q,r,0}-zI_{L^2(\bbR;dx)}\big)^{-1},    \lb{2.48} \\
&\hspace*{1.2cm}z\in \big\{\zeta\in \rho\big(L_{p,q,r,0}\big)\, \big| \, 1\in \rho\big(\ol{A_2(L_{p,q,r,0}-\zeta I_{L^2(\bbR;dx)})^{-1}B_2^*} \big)\big\},\no
\end{align}
where the operators $A_2,B_2:L^2(\bbR;dx)\rightarrow L^2(\bbR;dx)$ are defined by 
\begin{equation}\lb{2.49}
A_2f=-sf, \quad B_2f=f',\quad
\dom(A_2)=\dom(B_2)=W^{1,2}(\bbR).
\end{equation}
Indeed, one can again prove a decay estimate for $A_2$, $B_2$, and $L_{p,q,r,0}$ of the type \eqref{2.43}, namely, 
\begin{align}
&\Big\|\ol{A_2(L_{p,q,r,0}+EI_{L^2(\bbR;dx)})^{-1}B_2^*}\Big\|_{\cB(L^2(\bbR;dx))}\no\\
&\quad \leq \big\|A_2(L_{p,q,r,0}+EI_{L^2(\bbR;dx)})^{-1/2}\big\|_{\cB(L^2(\bbR;dx))}\no\\
&\quad \quad \times \Big\|\ol{(L_{p,q,r,0}+EI_{L^2(\bbR;dx)})^{-1/2}B_2^*}
\Big\|_{\cB(L^2(\bbR;dx))}\no\\
&\quad \leq CE^{-\alpha}, \quad E>E_0,\lb{2.46.0}
\end{align}
for appropriate constants $C>0$, $\alpha>0$, and $E_0>0$ to show
\begin{equation}\lb{2.50}
1\in \rho\big(\ol{A_2(L_{p,q,r,0}+EI_{L^2(\bbR;dx)})^{-1}B_2^*} \big), 
\end{equation}
for $E > 0$ sufficiently large. The calculations involved in establishing a bound of the type \eqref{2.46.0} are similar to those carried out in \eqref{2.39}--\eqref{2.43}, but simplify slightly since the factorization $B_2^*A_2$ is carried out over $L^2(\bbR;dx)$ (instead of $L^2(\bbR;dx)^2$, as in the case of the factorization $B_1^*A_1$), so we omit them 
here. As a result, by \cite[Theorem 2.3]{GLMZ05} the right-hand side of \eqref{2.48} defines the resolvent of a densely defined, closed, linear operator.  The operator so defined coincides with $L_{p,q,r,s}$ by Theorem \ref{te.3}.  Since
\begin{equation}\lb{2.51}
\dom\big(L_{p,q,r,0}^{1/2} \big)=\dom(A_2)=\dom\big(\big(L_{p,q,r,0}^* \big)^{1/2}\big)=\dom(B_2),
\end{equation}
another application of Corollary \ref{cd.5a} yields \eqref{2.47}, completing the proof.
\end{proof}

\begin{remark}
In order to apply the abstract results of Corollary \ref{cd.5a}, the two-step process in the proof of 
Theorem \ref{t2.5} is critical. Instead of the two-step process, one might instead try to replace $L_{p,q,r,0}$ 
in \eqref{2.36} by $L_{p,q,r,s}$ and prove $\dom\big(\big(L_{p,q,r,s}^*\big)^{1/2} \big) 
= \dom\big( L_{p,q,r,s}^{1/2}\big)$ directly. The problem with this approach is that $L_{p,q,r,s}$ is a 
perturbation of $L_{p,0,0,0}$ by a term which formally factors as $B_3^*A_3$ over the auxiliary Hilbert 
space $L^2(\bbR;dx)^3$ in the following way:
\begin{equation}\lb{2.52}
\begin{split}
&A_3,B_3:L^2(\bbR;dx)\rightarrow L^2(\bbR;dx)^3,\quad \dom\big(A_3 \big)=\dom\big(B_3 \big)=W^{1,2}(\bbR)\\
& A_3f=\begin{pmatrix}
f' \\
-sf\\
e^{i\text{Arg}(q)}|q|^{1/2}f
\end{pmatrix},\quad B_3f=\begin{pmatrix}
\ol{r}f \\
f'\\
|q|^{1/2}f
\end{pmatrix},\quad f\in W^{1,2}(\bbR).
\end{split}
\end{equation}
The difficulty with this approach is that simple techniques for estimating norms (like the first inequality 
in \eqref{2.40}, for example) are not sharp enough to yield decay of
\begin{align}
&\big\|B_3(L_{p,0,0,0}^*+EI_{L^2(\bbR;dx)})^{-1/2}\big\|_{\cB(L^2(\bbR;dx),L^2(\bbR;dx)^3)},\lb{2.53}\\
& \big\|A_3(L_{p,0,0,0}+EI_{L^2(\bbR;dx)})^{-1/2}\big\|_{\cB(L^2(\bbR;dx),L^2(\bbR;dx)^3)},\lb{2.54}
\end{align}
in terms of an inverse power of $E$.  Indeed, following such a simple approach to estimate the operator 
norm in \eqref{2.53}, one is faced with the norm,
\begin{equation}\lb{2.55}
\big\|(d/dx)(L_{p,0,0,0}+EI_{L^2(\bbR;dx)})^{-1/2} \big\|_{\cB(L^2(\bbR;dx))}, \quad E>0.
\end{equation}
It is clear that the norm in \eqref{2.55} is uniformly bounded for all $E\geq 1$ (cf. Lemma \ref{l2.4} and 
\eqref{2.31}), but one cannot expect decay as $E\rightarrow \infty$, and certainly not as an inverse 
power of $E$. The end result is that, with crude estimates such as these, one only shows that the 
norms in \eqref{2.53} and \eqref{2.54} are {\it uniformly bounded} in $E\geq 1$.  Boundedness without 
sufficient decay in $E$ is not sufficient to apply Corollary \ref{cd.5a}.
\end{remark}

\section{Sturm--Liouville Operators in $L^2((a,\infty);dx)$}  \lb{s4}

\begin{hypothesis}\lb{h4.1}
Let $a\in \bbR$ be fixed.  Suppose that $p$, $q$, $r$, and $s$ satisfy the following conditions: \\
$(i)$ $p:(a,\infty)\rightarrow \bbC$ and there exist constants $0<\lambda\leq \Lambda$, such that 
\begin{equation}\lb{4.1}
\lambda<\Re(p(x)) \, \text{ and } \, |p(x)|\leq \Lambda \, \text{ for a.e.\ $x\in (a,\infty)$.} 
\end{equation}
$(ii)$  $q\in L^1_{\locunif}([a,\infty);dx)$ and $r,\, s \in L^2_{\locunif}([a,\infty);dx)$ in the sense that
\begin{align}
&N_q:=\sup_{c\in [a,\infty)}\bigg( \int_c^{c+1} dx\, |q(x)|\bigg)<\infty,\quad N_r:=\sup_{c\in [a,\infty)}\bigg(\int_c^{c+1}dx\, |r(x)|^2\bigg)<\infty,\no\\
&\hspace*{2.7cm} N_s:=\sup_{c\in [a,\infty)}\bigg(\int_c^{c+1}dx\, |s(x)|^2\bigg)<\infty.\lb{4.2}
\end{align}
In addition, fix $\cW((a,\infty))$ with either $\cW((a,\infty))=W^{1,2}((a,\infty))$ or $\cW((a,\infty))=W^{1,2}_0((a,\infty))$.
\end{hypothesis}

Again, note that $p$, $q$, $r$, and $s$ are not assumed to be real-valued and that $q$, $r$, and $s$ need {\it not} be bounded.

Assuming Hypothesis \ref{h4.1}, define sesquilinear forms $\mathfrak{q}_j$, $j\in \{0,1,2,3\}$, as follows:
\begin{align}
& \mathfrak{q}_0(f,g) = \int_a^{\infty}dx\, \overline{f'(x)}p(x)g'(x),\lb{4.3}\\
& \mathfrak{q}_1(f,g) = \int_a^{\infty}dx\, \overline{f(x)}r(x)g'(x),\lb{4.4}\\
& \mathfrak{q}_2(f,g) = \int_a^{\infty}dx\, \overline{f'(x)}s(x)g(x),\lb{4.5}\\
& \quad f,g\in \dom(\mathfrak{q}_0)=\dom(\mathfrak{q}_1)=\dom(\mathfrak{q}_2)=\cW((a,\infty)),\no\\
& \mathfrak{q}_3(f,g) = \int_a^{\infty}dx\, \overline{f(x)}q(x)g(x),\lb{4.6}\\
& \quad f,g\in \dom(\mathfrak{q}_3)=\big\{h\in L^2((a,\infty);dx)\, \big|\, |q|^{1/2}h\in L^2((a,\infty);dx) \big\}.\no
\end{align}

The following proposition is a necessary first step in order to use the sesquilinear forms $\mathfrak{q}_j$, $j\in \{0,1,2,3\}$, to define an m-sectorial operator.
\begin{proposition}\lb{p4.2}
Assume Hypothesis \ref{h2.1} and let $\mathfrak{q}_j$, $j\in \{0,1,2,3\}$, be defined as in \eqref{4.2}--\eqref{4.5}. Then the following items hold:\\
$(i)$  $\mathfrak{q}_0$ is a densely defined, sectorial, and closed sesquilinear form.\\
$(ii)$  Each of the sesquilinear forms $\mathfrak{q}_j$, $j\in \{1,2,3\}$, is infinitesimally bounded with respect to $\mathfrak{q}_0$.  In particular, there exist constants $\e_0 > 0$ and $M > 0$ such that
\begin{equation}\lb{4.7}
\begin{split}
|\mathfrak{q}_j(f,f)|\leq \e \Re(\mathfrak{q}_0(f,f)) + M\e^{-3}\|f\|_{L^2((a,\infty);dx)}^2,&\\
f\in \cW((a,b)), \; 0<\e<\e_0, \, j\in \{1,2,3\}.&
\end{split}
\end{equation}
\end{proposition}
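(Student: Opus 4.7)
The plan is to mirror the proof of Proposition \ref{p2.2}, handling the two choices $\cW((a,\infty)) = W^{1,2}((a,\infty))$ and $\cW((a,\infty)) = W^{1,2}_0((a,\infty))$ in parallel, and reducing the essential analytic content to the real-line form bounds already invoked in Proposition \ref{p2.2} via a suitable extension step.

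For item $(i)$, density of $\cW((a,\infty))$ in $L^2((a,\infty);dx)$ is standard for both choices. Sectoriality follows verbatim from the computation in (2.7)--(2.9): the pointwise bounds on $\Re(p)$ and $|p|$ in Hypothesis \ref{h4.1}$(i)$ yield $|\Im(\mathfrak{q}_0(f,f))| \le (\Lambda/\lambda)\Re(\mathfrak{q}_0(f,f))$ for $f \in \cW((a,\infty))$. For closedness, I would establish the half-line analog of (2.11), i.e., that the graph norm $\|\cdot\|_{\mathfrak{q}_0}$ is equivalent on $\cW((a,\infty))$ to the ambient Sobolev norm $\|\cdot\|_{W^{1,2}((a,\infty))}$. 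Since $W^{1,2}((a,\infty))$ is complete, and $W^{1,2}_0((a,\infty))$ is a closed subspace of it, closedness of $\mathfrak{q}_0$ follows.

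For item $(ii)$, the technical heart is a half-line version of the Schechter--Simon form bounds (2.12)--(2.14) for multiplication by $|r|$, $|s|$, and $|q|^{1/2}$. I would obtain these by extending $f \in \cW((a,\infty))$ to a function $\widetilde f \in W^{1,2}(\bbR)$ in a way that inflates neither $\|f\|_{L^2}$ nor $\|f'\|_{L^2}$ beyond a universal constant: extension by zero in the $W^{1,2}_0$ case, and extension by even reflection across $a$ in the $W^{1,2}$ case (noting that a one-dimensional $W^{1,2}$ function is continuous at $a$, so the even reflection is absolutely continuous on $\bbR$ with weak derivative in $L^2$). After extending $q,r,s$ by zero to $\bbR$, the constants $C_q,C_r,C_s$ in (2.15) are dominated by $N_q,N_r,N_s$ from \eqref{4.2}, and (2.12)--(2.14) transfer to $\cW((a,\infty))$ with appropriately adjusted constants.

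Once the half-line form bounds are in hand, the remainder of the argument transfers word-for-word: rescaling $\kappa$ as in (2.16)--(2.19) to produce estimates of the form $\|rf\|_{L^2} \le \varepsilon \|f'\|_{L^2} + C_0 \varepsilon^{-1}\|f\|_{L^2}$ (and analogously for $s$, $|q|^{1/2}$); inserting these into $|\mathfrak{q}_j(f,f)|$ as in (2.20); absorbing the cross term $\|f\|\|f'\|$ by the elementary inequality (2.21); and finally using \eqref{2.8} to trade $\|f'\|_{L^2}^2$ for $\Re(\mathfrak{q}_0(f,f))$, arriving at \eqref{4.7} with $\varepsilon_0$ and $M$ expressible in terms of $\lambda$, $N_q$, $N_r$, $N_s$. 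The main obstacle, such as it is, is the bookkeeping required to treat both form domains simultaneously through the extension step; once that is set up cleanly, the proof is essentially a transcription of the argument for Proposition \ref{p2.2}.
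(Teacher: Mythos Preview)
Your proposal is correct. For item $(i)$ you proceed exactly as the paper does. For item $(ii)$, however, you take a genuinely different route.

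The paper does not extend to the full line. Instead it partitions $(a,\infty)$ into unit intervals $\Omega_j=(a+j,a+j+1)$, applies the finite-interval Trudinger-type bound (Theorem~\ref{tb.1}) to the restriction $f|_{\Omega_j}$ on each piece, and sums over $j$ using $\|f\|_{L^2((a,\infty))}^2=\sum_j\|f|_{\Omega_j}\|_{L^2(\Omega_j)}^2$. This yields the half-line form bounds \eqref{4.13} directly, uniformly in the choice of $\cW((a,\infty))$, with no case distinction. Your approach instead extends $f$ to $W^{1,2}(\bbR)$ (by zero or by even reflection, depending on the form domain), extends $q,r,s$ by zero, checks that the locally uniform norms are preserved, and then quotes the full-line Schechter--Simon bounds from the proof of Proposition~\ref{p2.2}. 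Both are clean; yours has the virtue of literally reducing to an already-proved statement, at the cost of a small case split in the extension step, while the paper's argument is uniform in $\cW$ and reuses the finite-interval bound that is in any case needed later for Section~\ref{s3}.
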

\begin{proof}
The proof of item $(i)$ is essentially identical to the proof of item $(i)$ in Proposition \ref{p2.2}, one simply replaces $W^{1,2}(\bbR)$ by $\cW((a,\infty))$ and the integration over $\bbR$ by integration over $(a,\infty)$ in \eqref{2.7}--\eqref{2.8}, so we omit further details.

In order to prove item $(ii)$, we define
\begin{align}
\Omega_j=(a+j,a+j+1), \quad j\in \bbN\cup\{0\}.\lb{4.8}
\end{align}

If $\phi_j$ denotes the restriction of any function $\phi: (a,\infty)\rightarrow \bbC$ to $\Omega_j$,
\begin{align}
\phi_j = \phi|_{\Omega_j}, \quad j\in \bbN\cup \{0\},\lb{4.9}
\end{align}
then an application of Theorem \ref{tb.1} implies
\begin{align}
\begin{split} 
\|w_j f_j\|_{L^2(\Omega_j;dx)}^2 \leq \e N \| (f_j)'\|_{L^2(\Omega_j;dx)}^2+\big(1+\e^{-1}\big)N\|f_j\|_{L^2(\Omega_j;dx)}^2,&\lb{4.10}\\
f\in \cW((a,\infty)), w\in \big\{|q|^{1/2},r,s\big\},\, j\in \bbN\cup\{0\},\, \e>0,& 
\end{split} 
\end{align}
where we have set
\begin{align}
N = \max \big\{N_q, N_r, N_s, 1 \big\}.\lb{4.11}
\end{align}
In light of the trivial fact,
\begin{align}
\|f\|_{L^2((a,\infty);dx)}^2 = \sum_{j=0}^{\infty} \|f_j\|_{L^2(\Omega_j;dx)}^2,\quad f\in L^2((a,\infty);dx),\lb{4.12}
\end{align}
\eqref{4.10} immediately implies
\begin{align}
\begin{split} 
\|wf\|_{L^2((a,\infty);dx)}^2\leq \e N \|f'\|_{L^2((a,\infty);dx)}^2+2\e^{-1}N\|f\|_{L^2((a,\infty);dx)}^2,&\lb{4.13}\\
f\in \cW((a,\infty)),\, w\in \big\{|q|^{1/2},r,s \big\},\, 0<\e<1,&   
\end{split}
\end{align}
by summing over all $\Omega_j$ and making use of
\begin{align}
(f_j)'(x)=(f')_j(x)\, \text{ a.e. $x\in \Omega_j$},\, j\in \bbN\cup\{0\},\, f\in \cW((a,\infty)).\lb{4.14}
\end{align}
Now one can show that \eqref{4.7} follows from \eqref{4.13} by mimicking, with only minor modifications, the same strategy used to deduce \eqref{2.6} from \eqref{2.12}--\eqref{2.14}.  We omit further details at this point.
\end{proof}

With the sesquilinear forms $\mathfrak{q}_j$, $j\in \{0,1,2,3\}$, in hand, we are now ready to define an m-sectorial operator.  By Proposition \ref{p4.2} and \cite[Theorem VI.1.33]{Ka80}, the sesquilinear form
\begin{equation}\lb{4.15}
\mathfrak{q}(f,g):=\sum_{j=0}^3\mathfrak{q}_j(f,g), \quad \dom(\mathfrak{q}) = \cW((a,\infty)),
\end{equation}
is a densely defined, closed, sectorial sesquilinear form in $L^2((a,\infty);dx)$.  Therefore, $\mathfrak{q}$ is uniquely associated to an m-sectorial operator by the 1st representation theorem \cite[Theorem VI.2.1]{Ka80}, and we denote this operator by $L_{p,q,r,s}^{a,N}$ when $\cW((a,\infty))=W^{1,2}((a,\infty))$ and by $L_{p,q,r,s}^{a,D}$ when $\cW((a,\infty))=W^{1,2}_0((a,\infty))$.  Formally speaking, $L_{p,q,r,s}^{a,N}$ and $L_{p,q,r,s}^{a,D}$ take the form
\begin{equation}
L_{p,q,r,s}^{a,Y} = -\frac{d}{dx}p\frac{d}{dx}+r\frac{d}{dx}-\frac{d}{dx}s+q, \quad Y\in \{D,N\}.\lb{4.16}
\end{equation}
Here, the superscript {\it D} denotes {\it Dirichlet} and {\it N} denotes {\it Neumann} since functions in the domain of $L_{p,q,r,s}^{a,D}$ (resp., $L_{p,q,r,s}^{a,N}$) satisfy a Dirichlet (resp., Neumann) boundary condition at $x=a$ of the form $f(a)=0$ (resp., $f^{[1]}(a)=0$, where $y^{[1]}=py' \in AC([a,R])$ denotes the first 
quasi-derivative of $y \in AC([a,R])$ for all $R>a$).

In the simple case when $q=r=s=0$ a.e. in $(a,\infty)$, one obtains coincidence of square root domains as a special case of the results of \cite{AMN97}, \cite{AT92}.

\begin{theorem}\lb{t4.3}$($\cite[Thm.~6.1 and Case I\,$(ii)$ on p.~685]{AMN97}$;$ \cite[(2.2b) and (2.2c)]{AT92}$)$.
Let $a\in \bbR$.  If $p\in L^{\infty}((a,\infty);dx)$ and $\Re(p(x))>\lambda>0$ for a.e.\ $x\in (a,\infty)$ and some $\lambda>0$, then
\begin{align}
\dom\big((L_{p,0,0,0}^{a,D})^{1/2} \big) = \dom\big(\big((L_{p,0,0,0}^{a,D})^*\big)^{1/2} \big) = W^{1,2}_0((a,\infty)),&\lb{4.17}\\
\dom\big((L_{p,0,0,0}^{a,N})^{1/2} \big) = \dom\big(\big((L_{p,0,0,0}^{a,N})^*\big)^{1/2} \big) = W^{1,2}((a,\infty)).\lb{4.18}&
\end{align} 
\end{theorem}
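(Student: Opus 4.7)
The plan is to reduce Theorem~\ref{t4.3} to the full-line Theorem~\ref{t2.3} via reflection across the endpoint $a$. Extend $p$ to all of $\bbR$ by setting $P(x) = p(x)$ for $x > a$ and $P(x) = p(2a-x)$ for $x < a$; since this symmetric extension still satisfies $\Re(P) > \lambda$ and $|P| \leq \Lambda$ a.e., Theorem~\ref{t2.3} applies to the full-line operator $\widehat{L} := L_{P,0,0,0}$ in $L^2(\bbR;dx)$ and gives $\dom\big(\widehat{L}^{\,1/2}\big) = \dom\big(\big(\widehat{L}^{\,*}\big)^{1/2}\big) = W^{1,2}(\bbR)$.

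Next, introduce the involutive unitary $U$ defined by $(Uf)(x) = f(2a-x)$ on $L^2(\bbR;dx)$, with $\pm 1$ eigenspaces $\cH_e$ (even about $a$) and $\cH_o$ (odd about $a$). Because $P$ is symmetric about $a$, $U$ commutes with the sesquilinear form associated with $\widehat{L}$, hence with $\widehat{L}$ and all resolvents $(\widehat{L} - zI)^{-1}$, $z\in \rho(\widehat{L})$. Since $U = U^{*}$, the same commutation holds for $\widehat{L}^{\,*}$. Via Kato's integral representation \eqref{1.11}, the commutation is inherited by the fractional powers, so both $\cH_e$ and $\cH_o$ are invariant under $\widehat{L}^{\,1/2}$ and under $\big(\widehat{L}^{\,*}\big)^{1/2}$. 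Restriction to $(a,\infty)$ furnishes isometries (up to a factor $\sqrt{2}$) $R_e:\cH_e \to L^2((a,\infty);dx)$ and $R_o:\cH_o \to L^2((a,\infty);dx)$, and a direct computation at the form level verifies that $R_e$ intertwines $\widehat{L}|_{\cH_e}$ with $L_{p,0,0,0}^{a,N}$ while $R_o$ intertwines $\widehat{L}|_{\cH_o}$ with $L_{p,0,0,0}^{a,D}$.

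Combining these ingredients, for $Y \in \{N,D\}$ with $\cH_Y$ the corresponding parity subspace,
\[
\dom\big((L_{p,0,0,0}^{a,Y})^{1/2}\big) \;=\; R_Y\Big( \dom\big(\widehat{L}^{\,1/2}\big) \cap \cH_Y\Big) \;=\; R_Y\big(W^{1,2}(\bbR) \cap \cH_Y\big),
\]
and the analogous identity holds with $\widehat{L}^{\,*}$ replacing $\widehat{L}$. In one dimension, an even $W^{1,2}(\bbR)$-function restricts to an element of $W^{1,2}((a,\infty))$ with no boundary constraint, while an odd $W^{1,2}(\bbR)$-function necessarily vanishes at $a$ and hence restricts to an element of $W^{1,2}_0((a,\infty))$; this yields \eqref{4.17} and \eqref{4.18} simultaneously. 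The main technical point I anticipate is checking carefully that the intertwining identifies not merely the operators but the square-root domains correctly, which reduces to verifying that the maps $R_Y$ genuinely preserve Sobolev regularity across $a$---this is precisely the place where the Dirichlet versus Neumann dichotomy, encoded by the parity of the chosen extension, enters the argument.
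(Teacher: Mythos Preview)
Your reflection argument is correct and complete in outline: the even extension of $p$ yields a full-line coefficient to which Theorem~\ref{t2.3} applies, the parity involution $U$ commutes with the form and hence with $\widehat{L}$, its resolvents, and (via the Balakrishnan representation) its fractional powers, so that $\cH_e$ and $\cH_o$ reduce $\widehat{L}^{1/2}$; the unitary restrictions $R_e$, $R_o$ then transport the half-line Neumann and Dirichlet problems onto the even and odd parts, respectively, and the identification $R_Y(W^{1,2}(\bbR)\cap\cH_Y)$ with $W^{1,2}((a,\infty))$ or $W^{1,2}_0((a,\infty))$ is exactly the standard fact about even/odd Sobolev extensions in one dimension.

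This is, however, a genuinely different route from the paper's: the paper does not prove Theorem~\ref{t4.3} at all but simply quotes it from \cite{AMN97} and \cite{AT92} as an independent input, parallel to the full-line input Theorem~\ref{t2.3}. Your argument instead \emph{derives} Theorem~\ref{t4.3} from Theorem~\ref{t2.3}, showing that the half-line Dirichlet and Neumann cases are not logically independent of the full-line result but follow from it by symmetry. This buys a more self-contained presentation (one black-box input instead of two) and makes transparent why Dirichlet pairs with $W^{1,2}_0$ and Neumann with $W^{1,2}$. The cost is that the reflection trick does not obviously extend to more general boundary conditions or to the finite-interval setting with mixed Dirichlet/Neumann endpoints, whereas the cited results in \cite{AMN97}, \cite{AT92} are proved by methods that do.
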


Using Theorem \ref{t4.3} as a basic input, we can now state and prove the main result of this section.

\begin{theorem}\lb{t4.4}
Assume Hypothesis \ref{h4.1}, then
\begin{align}
\dom\big((L_{p,q,r,s}^{a,D})^{1/2} \big) = \dom\big(\big((L_{p,q,r,s}^{a,D})^*\big)^{1/2} \big) = W^{1,2}_0((a,\infty)),&\lb{4.19}\\
\dom\big((L_{p,q,r,s}^{a,N})^{1/2} \big) = \dom\big(\big((L_{p,q,r,s}^{a,N})^*\big)^{1/2} \big) = W^{1,2}((a,\infty)).\lb{4.20}&
\end{align}
\end{theorem}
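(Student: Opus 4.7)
The plan is to repeat the two-step argument of Theorem \ref{t2.5} mutatis mutandis, with the base case now supplied by the half-line Theorem \ref{t4.3} rather than by its real-line counterpart. I handle both $Y\in\{D,N\}$ in a single pass: writing $\cW((a,\infty))=W^{1,2}_0((a,\infty))$ in the Dirichlet case and $\cW((a,\infty))=W^{1,2}((a,\infty))$ in the Neumann case, it suffices to prove $\dom\big((L^{a,Y}_{p,q,r,s})^{1/2}\big)=\cW((a,\infty))$ for every admissible $p,q,r,s$; the analogous identity for the adjoint then follows upon complex conjugation of the coefficients and interchanging the roles of $r$ and $s$.

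For the first step I treat the case $s=0$ a.e.\ on $(a,\infty)$. Define
\[
A_1 f = \begin{pmatrix} f' \\ e^{i\,\text{Arg}(q)}|q|^{1/2} f\end{pmatrix},\qquad
B_1 f = \begin{pmatrix} \overline{r} f \\ |q|^{1/2} f\end{pmatrix},
\]
with common domain $\cW((a,\infty))$, so that $L^{a,Y}_{p,q,r,0}$ formally differs from $L^{a,Y}_{p,0,0,0}$ by $B_1^*A_1$; both $A_1$ and $B_1$ map $\cW((a,\infty))$ into $L^2((a,\infty);dx)^2$ while preserving the prescribed boundary condition at $a$. Applying Theorem \ref{ta.1} on the half-line together with Theorem \ref{t4.3} and Lemma \ref{l2.4}, and using the uniform bound \eqref{4.13}, I would obtain the half-line analogues of the estimates \eqref{2.39}, \eqref{2.42}, and \eqref{2.43}, namely $\|A_1(L^{a,Y}_{p,0,0,0}+EI)^{-1/2}\|\leq C_1$ and $\|B_1((L^{a,Y}_{p,0,0,0})^*+EI)^{-1/2}\|\leq C_2E^{-1/4}$ for all sufficiently large $E>0$. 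The half-line Kato resolvent identity (mirroring \eqref{2.36}) together with \cite[Theorem~2.3]{GLMZ05} and Theorem \ref{te.3} identifies its right-hand side with $(L^{a,Y}_{p,q,r,0}-zI)^{-1}$, and then Corollary \ref{cd.5a} combined with Theorem \ref{t4.3} yields $\dom\big((L^{a,Y}_{p,q,r,0})^{1/2}\big)=\cW((a,\infty))$.

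For the second step I handle general $s$ by setting $A_2 f=-sf$ and $B_2 f=f'$ on $\cW((a,\infty))\subset L^2((a,\infty);dx)$, so that $L^{a,Y}_{p,q,r,s}$ arises as a $B_2^*A_2$-perturbation of $L^{a,Y}_{p,q,r,0}$. Using the conclusion of Step~1, the local-uniform bound \eqref{4.13}, and Theorem \ref{ta.1} applied to $L^{a,Y}_{p,q,r,0}$, I would obtain the decay estimate $\|\overline{A_2(L^{a,Y}_{p,q,r,0}+EI)^{-1}B_2^*}\|\leq C E^{-\alpha}$ for some $\alpha>0$, in the spirit of \eqref{2.46.0}. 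Kato's resolvent identity (the half-line analogue of \eqref{2.48}) and a second application of Corollary \ref{cd.5a} and Theorem \ref{te.3} complete the identification $\dom\big((L^{a,Y}_{p,q,r,s})^{1/2}\big)=\dom\big((L^{a,Y}_{p,q,r,0})^{1/2}\big)=\cW((a,\infty))$.

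The main obstacle I anticipate is not in the functional-analytic mechanism, which is essentially a transcription of the real-line argument, but in verifying that Theorem \ref{ta.1} and its companion resolvent-decay estimates extend verbatim to $(a,\infty)$ with the boundary condition at $a$ inherited from $\cW((a,\infty))$. Once these estimates and the Trudinger-type bounds underlying \eqref{4.13} are confirmed on the half-line, the rest of the argument is largely bookkeeping; the crucial structural point is that both $A_j$ and $B_j$ act by multiplication or differentiation and hence preserve the Dirichlet or Neumann boundary condition at $a$, so that the iteration never leaves the form domain $\cW((a,\infty))$.
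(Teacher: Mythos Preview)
Your proposal is correct and follows essentially the same two-step argument as the paper's own proof, with the same factorizations $B_1^*A_1$ and $B_2^*A_2$, the same invocations of Kato's resolvent identity, Theorem~\ref{te.3}, and Corollary~\ref{cd.5a}, and the same base case Theorem~\ref{t4.3}. The half-line analogue of Theorem~\ref{ta.1} that you correctly flag as the only nontrivial ingredient is precisely Theorem~\ref{ta.2}, and your closing remark about boundary conditions being ``preserved'' is unnecessary---what matters is only that $\dom(A_j)=\dom(B_j)=\cW((a,\infty))$ contains the relevant square root domains, which holds by construction.
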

\begin{proof}
Fix $Y\in \{D,N\}$ and denote the domain of the sesquilinear form associated to $L_{p,q,r,s}^{a,Y}$ by $\cW((a,\infty))$.  Specifically, $\cW((a,\infty))=W^{1,2}((a,\infty)$ if $Y=N$ and $\cW((a,\infty)) = W^{1,2}_0((a,\infty))$ if $Y=D$.  In order to obtain \eqref{4.19} and \eqref{4.20}, it suffices to establish $\dom\big((L_{p,q,r,s}^{a,Y})^{1/2}\big) = \cW((a,\infty))$ for all $p$, $q$, $r$, and $s$ satisfying the assumptions in Hypothesis \ref{h4.1}.  Then by taking complex conjugates of $p$, $q$, $r$, and $s$, we obtain $\dom\big(\big((L_{p,q,r,s}^{a,Y})^*\big)^{1/2} \big)= \cW((a,\infty))$, and \eqref{4.19}, \eqref{4.20} then follow.  We carry out the proof in a two-step process and begin by proving the result in the special case when $s=0$ a.e.\ on $(a,\infty)$.  That is, we will first show that
\begin{equation}\lb{4.21}
\dom\big((L_{p,q,r,0}^{a,Y})^{1/2} \big)=\cW((a,\infty)),
\end{equation}
and then prove the statement for general $s$ satisfying the assumptions of Hypothesis \ref{h4.1}.

Although we defined $L_{p,q,r,0}^{a,Y}$ via sesquilinear forms, we may alternatively define it indirectly in terms of its resolvent by applying Kato's resolvent identity (cf.\ Theorem \ref{te.3}),
\begin{align}
&\big(L_{p,q,r,0}^{a,Y}-zI_{L^2((a,\infty);dx)}\big)^{-1}=\big(L_{p,0,0,0}^{a,Y}-zI_{L^2((a,\infty);dx)}\big)^{-1} 
\no \\
&\quad-\ol{\big(L_{p,0,0,0}^{a,Y}-zI_{L^2((a,\infty);dx)}\big)^{-1}B_1^*}\no\\
&\quad \times \bigg[I_{L^2((a,\infty);dx)}+\ol{A_1\big(L_{p,0,0,0}^{a,Y}-zI_{L^2((a,\infty);dx)}\big)^{-1}B_1^*} \bigg]^{-1}\no\\
&\qquad \times A_1\big(L_{p,0,0,0}^{a,Y}-zI_{L^2((a,\infty);dx)}\big)^{-1},    \lb{4.22} \\
&\qquad \; z\in \Big\{\zeta\in \rho\big(L_{p,0,0,0}^{a,Y}\big)\, \Big| \, 1\in \rho\Big(\ol{A_1\big(L_{p,0,0,0}^{a,Y}-\zeta I_{L^2((a,\infty);dx)}\big)^{-1}B_1^*} \Big)\Big\},\no
\end{align}
where the operators $A_1$ and $B_1$ are defined by 
\begin{align}
\begin{split}\lb{4.23}
&A_1:L^2((a,\infty);dx)\rightarrow L^2((a,\infty);dx)^2,\quad A_1f=\begin{pmatrix}
f' \\
e^{i\text{Arg}(q)}|q|^{1/2}f
\end{pmatrix},\\
&B_1:L^2((a,\infty);dx)\rightarrow L^2((a,\infty);dx)^2,\quad B_1f=\begin{pmatrix}
\ol{r}f \\
|q|^{1/2}f
\end{pmatrix},
\end{split}\\
& \quad f\in \dom(A_1)=\dom(B_1)=\cW((a,\infty)).\no
\end{align}
The identity in \eqref{4.22} is obtained by viewing $L_{p,q,r,0}^{a,Y}$ as an additive perturbation of $L_{p,0,0,0}^{a,Y}$ by a term which can be factored as $B_1^*A_1$ over the auxiliary Hilbert space $L^2((a,\infty);dx)^2$.  Indeed, one notes that the operators $A_1$ and $B_1$ defined in \eqref{4.23} are closed, densely defined, linear operators from $L^2((a,\infty);dx)$ into $L^2((a,\infty);dx)^2$, with
\begin{equation}\lb{4.24}
\dom\big(L_{p,0,0,0}^{a,Y} \big)\subset \dom(A_1) \, \text{ and } \, \dom\big((L_{p,0,0,0}^{a,Y})^*\big)\subset \dom(B_1),
\end{equation}
and an application of \eqref{2.31} with $S=A_1$ and $T=L_{p,0,0,0}^{a,Y}$, yields the uniform estimate
\begin{equation}\lb{4.25}
\big\|A_1\big(L_{p,0,0,0}^{a,Y}+EI_{L^2((a,\infty);dx)}\big)^{-1/2}\big\|_{\cB(L^2((a,\infty);dx),L^2((a,\infty);dx)^2)}\leq C_1, \quad E>E_0,
\end{equation}
for an $E$-independent constant $C_1>0$ and a constant $E_0>0$.  In turn, Lemma \ref{l2.4} and Theorems \ref{t4.3} and \ref{ta.2} imply
\begin{align}
\begin{split} 
&\big\|B_1\big((L_{p,0,0,0}^{a,Y})^*+EI_{L^2((a,\infty);dx)}\big)^{-1/2}f\big\|_{L^2((a,\infty);dx)^2}    \\
& \quad \leq C_2E^{-1/4}\|f\|_{L^2((a,\infty);dx)},\quad f\in L^2((a,\infty);dx),\; E>E_0',\lb{4.26}
\end{split} 
\end{align}
where $C_2$ is an $E$-independent constant, and $E_0'$ is the constant guaranteed to exist by Theorem \ref{a.2}.  One arrives at \eqref{4.26} by employing a similar strategy to the one used to obtain \eqref{2.40}.  As a result, one infers the norm bound
\begin{align}
& \big\|B_1\big((L_{p,0,0,0}^{a,Y})^*+EI_{L^2((a,\infty);dx)}\big)^{-1/2}\big\|_{\cB(L^2((a,\infty);dx),L^2((a,\infty);dx)^2)}\leq C_2E^{-1/4},    \no \\
& \hspace*{9.8cm} E>E_0'.    \lb{4.27} 
\end{align}
Subsequently, \eqref{4.25} and \eqref{4.27} imply
\begin{align}
&\Big\|\ol{A_1\big(L_{p,0,0,0}^{a,Y}+EI_{L^2((a,\infty);dx)}\big)^{-1}B_1^*}\Big\|_{\cB(L^2((a,\infty);dx)^2)}\no\\
&\quad \leq \big\|A_1\big(L_{p,0,0,0}^{a,Y}+EI_{L^2((a,\infty);dx)}\big)^{-1/2}\big\|_{\cB(L^2((a,\infty);dx),L^2((a,\infty);dx)^2)}\no\\
&\quad \quad \times \Big\|\ol{\big(L_{p,0,0,0}^{a,Y}+EI_{L^2((a,\infty);dx)}\big)^{-1/2}B_1^*}
\Big\|_{\cB(L^2((a,\infty);dx)^2,L^2((a,\infty);dx))}\no\\
&\quad \leq C_1C_2E^{-1/4}, \quad E>\max\{E_0,E_0'\},\lb{4.28}
\end{align}
so that
\begin{equation}\lb{4.29}
1\in \rho\Big(\ol{A_1\big(L_{p,0,0,0}^{a,Y}+EI_{L^2((a,\infty);dx)}\big)^{-1}B_1^*} \Big),\quad E>\max\{E_0,E_0'\}.
\end{equation}
As a result, the set appearing in the right-hand side of \eqref{4.22} is nonempty.  By \cite[Theorem 2.3]{GLMZ05}, the right-hand side of \eqref{4.22} is the resolvent of a densely defined, closed, linear operator in $L^2((a,\infty);dx)$.  This operator coincides with $L_{p,q,r,0}^{a,Y}$ by Theorem \ref{te.3}.  Next, the estimate in \eqref{4.28} along with the fact that
\begin{equation}\lb{4.30}
\dom\Big(\big(L_{p,0,0,0}^{a,Y}\big)^{1/2} \Big)=\dom(A_1)=\dom\Big(\big(\big(L_{p,0,0,0}^{a,Y}\big)^* \big)^{1/2}\Big)=\dom(B_1),
\end{equation}
together imply 
\begin{equation}\lb{4.31}
\dom\big((L_{p,q,r,0}^{a,Y})^{1/2} \big)=\dom\big((L_{p,0,0,0}^{a,Y})^{1/2} \big),
\end{equation}
applying Corollary \ref{cd.5a}.  Finally, the statement in \eqref{4.21} follows from \eqref{4.31} and Theorem \ref{4.3}.

The next step in the proof is to establish \eqref{4.19} and \eqref{4.20} for arbitrary $s$ satisfying the assumptions of Hypothesis \ref{h4.1}.  This is carried out by showing that 
\begin{equation}\lb{4.32}
\dom\big((L_{p,q,r,s}^{a,Y})^{1/2} \big) = \dom\big((L_{p,q,r,0}^{a,Y})^{1/2} \big).
\end{equation}
Subsequently, the claim that $\dom\big((L_{p,q,r,s}^{a,Y})^{1/2}\big) = \cW((a,\infty))$ will follow from \eqref{4.21}.

In order to show \eqref{4.32}, we use a resolvent identity similar to \eqref{4.22}, but this time for the resolvents of $L_{p,q,r,s}^{a,Y}$ and $L_{p,q,r,0}^{a,Y}$:
\begin{align}
&\big(L_{p,q,r,s}^{a,Y}-zI_{L^2((a,\infty);dx)}\big)^{-1}=\big(L_{p,q,r,0}^{a,Y}-zI_{L^2((a,\infty);dx)}\big)^{-1}   
\no \\
&\quad-\ol{\big(L_{p,q,r,0}^{a,Y}-zI_{L^2((a,\infty);dx)}\big)^{-1}B_2^*}\no\\
&\qquad \times \bigg[I_{L^2((a,\infty);dx)}+\ol{A_2\big(L_{p,q,r,0}^{a,Y}-zI_{L^2((a,\infty);dx)}\big)^{-1}B_2^*} \bigg]^{-1}\no\\
&\qquad \times A_2\big(L_{p,q,r,0}^{a,Y}-zI_{L^2((a,\infty);dx)}\big)^{-1},    \lb{4.33} \\
&\qquad \; z\in \Big\{\zeta\in \rho\big(L_{p,q,r,0}^{a,Y}\big)\, \Big| \, 1\in \rho\Big(\ol{A_2\big(L_{p,q,r,0}^{a,Y}-\zeta I_{L^2((a,\infty);dx)}\big)^{-1}B_2^*} \Big)\Big\},\no
\end{align}
where, in analogy to \eqref{2.49}, the operators $A_2,B_2:L^2((a,\infty);dx)\rightarrow L^2((a,\infty);dx)$ are defined by 
\begin{equation}\lb{4.34}
A_2f=-sf, \quad B_2f=f',\quad
\dom(A_2)=\dom(B_2)=\cW((a,\infty)).
\end{equation}
To verify that \eqref{4.33} is correct, one uses a strategy analogous to the one used to establish the similar identities \eqref{2.36}, \eqref{2.48}, and \eqref{4.22}.  The main step is to prove a decay estimate for $A_2$, $B_2$, and $L_{p,q,r,0}^{a,Y}$ of the type
\begin{align}
&\Big\|\ol{A_2(L_{p,q,r,0}^{a,Y}+EI_{L^2((a,\infty);dx)})^{-1}B_2^*}\Big\|_{\cB(L^2((a,\infty);dx))}\no\\
&\quad \leq \big\|A_2(L_{p,q,r,0}^{a,Y}+EI_{L^2((a,\infty);dx)})^{-1/2}\big\|_{\cB(L^2((a,\infty);dx))}\no\\
&\quad \quad \times \Big\|\ol{(L_{p,q,r,0}^{a,Y}+EI_{L^2((a,\infty);dx)})^{-1/2}B_2^*}
\Big\|_{\cB(L^2((a,\infty);dx))}\no\\
&\quad \leq CE^{-\alpha}, \quad E>E_0,\lb{3.35.0}
\end{align}
for appropriate constants $C>0$, $\alpha>0$, and $E_0>0$ in order to show that 
\begin{equation}\lb{4.35}
1\in \rho\Big(\ol{A_2\big((L_{p,q,r,0}^{a,Y})+EI_{L^2((a,\infty);dx)}\big)^{-1}B_2^*} \Big), 
\end{equation}
for $E > 0$ sufficiently large. The calculations involved in proving an estimate of the form \eqref{3.35.0} are similar to those in 
\eqref{4.25}--\eqref{4.28}, but simplify slightly since the factorization $B_2^*A_2$ is carried out over 
$L^2((a,\infty);dx)$ (instead of $L^2((a,\infty);dx)^2$, as in the case of the factorization $B_1^*A_1$), 
so we omit them here. Hence, \cite[Theorem 2.3]{GLMZ05} implies the right-hand side of \eqref{4.33} 
is the resolvent of a densely defined, closed, linear operator in $L^2((a,\infty);dx)$.  The operator so 
defined coincides with $L_{p,q,r,s}^{a,Y}$ by Theorem \ref{te.3}. Since
\begin{equation}\lb{4.36}
\dom\Big((L_{p,q,r,0}^{a,Y})^{1/2} \Big)=\dom(A_2)
=\dom\Big(\big((L_{p,q,r,0}^{a,Y})^* \big)^{1/2}\Big)=\dom(B_2),
\end{equation}
yet another application of Corollary \ref{cd.5a} yields \eqref{4.32}.
\end{proof}

We note that in the special case where $p=1$ a.e.\ on $(a,\infty)$, one can use an approach based 
on Krein's resolvent formula to prove an extension of Theorem \ref{t4.4} to include all non-self-adjoint 
boundary conditions at the endpoint $a$,
\begin{equation}
\cos(\theta_a)g(a)+\sin(\theta_a)g'(a) = 0, \quad \theta_a \in \bbS_{\pi}.
\end{equation}
Since we will demonstrate this approach in detail in the final interval context in Lemma \ref{l3.3} 
and Theorem \ref{t3.6}, we omit the analogous discussion in the present half-line case.

\section{Sturm--Liouville Operators in $L^2((a,b);dx)$}  \lb{s3}

\begin{hypothesis}\lb{h3.1}
Suppose $-\infty<a<b<\infty$ and that $p$, $q$, $r$, and $s$ satisfy the following conditions: \\
$(i)$  $p:(a,b)\rightarrow \bbC$ and there exist constants $0<\lambda\leq \Lambda$ such that 
\begin{equation}\lb{3.1}
\lambda<\Re(p(x)) \, \text{ and } \, |p(x)|\leq \Lambda \, \text{ for a.e.\ $x\in (a,b)$.}
\end{equation}
$(ii)$  $q\in L^1((a,b);dx)$ and $r,\, s \in L^2((a,b);dx)$.
\end{hypothesis}

We are interested in defining realizations of the formal expression on the right-hand side in \eqref{2.27} in $L^2((a,b);dx)$ with certain boundary conditions at $a$ and $b$.  Denoting by $\bbS_{\pi}$ the strip 
\begin{equation}\lb{3.2}
\bbS_{\pi}=\{z\in \bbC\, |\, 0\leq \Re(z)<\pi \},
\end{equation}
we consider the following sesquilinear forms defined by
\begin{align}
&\mathfrak{q}_p^{(\theta_a,\theta_b)}(f,g)=\int_a^b dx \, \ol{f'(x)}p(x)g'(x) -\cot(\theta_a) \ol{f(a)}g(a)-\cot(\theta_b) \ol{f(b)}g(b),     \no \\
&f,g\in \dom\big(\mathfrak{q}_p^{(\theta_a,\theta_b)} \big)=W^{1,2}((a,b))    \lb{3.3} \\
&\quad=\big\{h\in L^2((a,b);dx)\, \big| \, h\in AC([a,b]),\, h'\in L^2((a,b);dx)\big\},\, \theta_a,\theta_b\in \bbS_{\pi}\backslash\{0\},\no\\[1mm]
&\mathfrak{q}_p^{(0,\theta_b)}(f,g)=\int_a^b dx \, \ol{f'(x)}p(x)g'(x) -\cot(\theta_b) \ol{f(b)}g(b),    \no \\
&f,g\in \dom\big(\mathfrak{q}_p^{(0,\theta_b)} \big)     \lb{3.4} \\
&\quad=\big\{h\in L^2((a,b);dx)\, \big| \, h\in AC([a,b]),\, h'\in L^2((a,b);dx),\, h(a)=0\big\},\no\\
&\hspace*{8.8cm} \theta_b\in \bbS_{\pi}\backslash\{0\},\no\\[1mm]
&\mathfrak{q}_p^{(\theta_a,0)}(f,g)=\int_a^b dx \, \ol{f'(x)}p(x)g'(x) -\cot(\theta_a) \ol{f(a)}g(a),    \no \\
&f,g\in \dom\big(\mathfrak{q}_p^{(\theta_a,0)} \big)    \lb{3.5} \\
&\quad=\{h\in L^2((a,b);dx)\, \big| \, h\in AC([a,b]),\, h'\in L^2((a,b);dx),\, h(b)=0\},\no\\
&\hspace*{8.7cm}\theta_a\in \bbS_{\pi}\backslash\{0\},\no\\[1mm]
&\mathfrak{q}_p^{(0,0)}(f,g)=\int_a^b dx \, \ol{f'(x)}p(x)g'(x),    \no \\
&f,g\in \dom\big(\mathfrak{q}_p^{(0,0)} \big)     \lb{3.6} \\
&\quad =\{h\in L^2((a,b);dx)\, \big| \, h\in AC([a,b]),\, h'\in L^2((a,b);dx),\, h(a)=h(b)=0\}.\no
\end{align}
It is a well-known fact that the sesquilinear forms in \eqref{3.3}--\eqref{3.6} are closed, sectorial, and densely defined in $L^2((a,b);dx)$.  Additionally, assuming Hypothesis \ref{h3.1}, we define
\begin{align}
&\mathfrak{q}_r(f,g)=\int_a^b dx\, \overline{f(x)}r(x)g'(x),\lb{3.7}\\
&\mathfrak{q}_s(f,g)=\int_a^b dx\, \overline{f'(x)}s(x)g(x),\lb{3.8}\\
& \quad f,g\in \dom(\mathfrak{q}_r)=\dom(\mathfrak{q}_s)=W^{1,2}((a,b)),\no\\
&\mathfrak{q}_q(f,g)=\int_a^b dx\, \overline{f(x)}q(x)g(x),\lb{3.9}\\
& \quad f,g\in \dom(\mathfrak{q}_q)=\big\{h\in L^2((a,b);dx)\, \big|\, |q|^{1/2}h\in L^2((a,b);dx) \big\}.\no
\end{align}

\begin{proposition}\lb{p3.2}
Assume Hypothesis \ref{h3.1}.  Then there exist constants $C_0>0$ and $\e_0>0$ such that
\begin{equation}\lb{3.10}
\begin{split}
& \big|\mathfrak{q}_{\phi}(f,f)\big|\leq \e\Re\big(\mathfrak{q}_p^{(\theta_a,\theta_b)}(f,f) \big)+C_0\e^{-3}\|f\|_{L^2((a,b);dx)}^2,   \\
& f\in \dom\big(\mathfrak{q}_p^{(\theta_a,\theta_b)} \big),\; 0<\e<\e_0,\; 
\theta_a,\theta_b\in\big\{0,\tfrac{\pi}{2}\big\},\; \phi\in \{q,r,s\}.  
\end{split}
\end{equation}
Further,
\begin{equation}\lb{3.11}
\begin{split}
\mathfrak{q}_{p,q,r,s}^{(\theta_a,\theta_b)}:=\mathfrak{q}_p^{(\theta_a,\theta_b)}+\mathfrak{q}_r+\mathfrak{q}_s+\mathfrak{q}_q, \quad \dom\big(\mathfrak{q}_{p,q,r,s}^{(\theta_a,\theta_b)}\big)=\dom\big(\mathfrak{q}_p^{(\theta_a,\theta_b)}\big),&\\
\theta_a,\theta_b\in \bbS_{\pi},&
\end{split}
\end{equation}
is a closed, sectorial, and densely defined sesquilinear form.
\end{proposition}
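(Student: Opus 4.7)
The argument proceeds in two stages, mirroring the structure used in Propositions~\ref{p2.2} and~\ref{p4.2}. To establish the form bound \eqref{3.10} I would exploit the fact that for $\theta_a,\theta_b\in\{0,\pi/2\}$ the form $\mathfrak{q}_p^{(\theta_a,\theta_b)}$ has no boundary contribution---at $\pi/2$ because $\cot(\pi/2)=0$, and at $0$ because the Dirichlet constraint built into \eqref{3.4}--\eqref{3.6} forces $f$ to vanish at the corresponding endpoint. Hypothesis~\ref{h3.1}$(i)$ then yields
\begin{equation*}
\Re\big(\mathfrak{q}_p^{(\theta_a,\theta_b)}(f,f)\big)\;\geq\;\lambda\,\|f'\|_{L^2((a,b);dx)}^2.
\end{equation*}
Since $(a,b)$ is bounded, the assumptions $q\in L^1((a,b);dx)$, $r,s\in L^2((a,b);dx)$ trivially give $|q|^{1/2},r,s\in L^2_{\locunif}([a,b);dx)$, so Theorem~\ref{tb.1} delivers
\begin{equation*}
\|wf\|_{L^2((a,b);dx)}^2\;\leq\;\e N\,\|f'\|_{L^2((a,b);dx)}^2+(1+\e^{-1})N\,\|f\|_{L^2((a,b);dx)}^2,
\end{equation*}
for $w\in\{|q|^{1/2},r,s\}$ and every $f\in\dom\big(\mathfrak{q}_p^{(\theta_a,\theta_b)}\big)$, with $N$ depending only on $\|q\|_{L^1},\|r\|_{L^2}^2,\|s\|_{L^2}^2$. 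Combining these two estimates and replaying the chain \eqref{2.16}--\eqref{2.25} from the proof of Proposition~\ref{p2.2} verbatim produces \eqref{3.10}.

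For the closedness/sectoriality claim regarding $\mathfrak{q}_{p,q,r,s}^{(\theta_a,\theta_b)}$ in \eqref{3.11} I would extend \eqref{3.10} to arbitrary $\theta_a,\theta_b\in\bbS_\pi$ and then invoke \cite[Theorem VI.1.33]{Ka80}, using that $\mathfrak{q}_p^{(\theta_a,\theta_b)}$ is itself closed, sectorial, and densely defined (as noted right after \eqref{3.6}) and that density of $\dom\big(\mathfrak{q}_{p,q,r,s}^{(\theta_a,\theta_b)}\big)=\dom\big(\mathfrak{q}_p^{(\theta_a,\theta_b)}\big)$ in $L^2((a,b);dx)$ is inherited from the density of $W^{1,2}((a,b))$. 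The obstruction, compared with the previous step, is that $\Re\big(\mathfrak{q}_p^{(\theta_a,\theta_b)}(f,f)\big)$ now contains the boundary contributions $-\Re(\cot\theta_a)|f(a)|^2-\Re(\cot\theta_b)|f(b)|^2$, whose signs are generally indefinite. I would absorb them via the standard trace inequality
\begin{equation*}
|f(a)|^2+|f(b)|^2\;\leq\;\delta\,\|f'\|_{L^2((a,b);dx)}^2+C_\delta\,\|f\|_{L^2((a,b);dx)}^2,\quad f\in W^{1,2}((a,b)),\;\delta>0,
\end{equation*}
with $C_\delta$ depending only on $\delta$ and $b-a$. Choosing $\delta$ sufficiently small relative to $\lambda$ and $|\Re(\cot\theta_a)|+|\Re(\cot\theta_b)|$ produces a constant $K=K(\theta_a,\theta_b,\lambda,b-a)$ for which
\begin{equation*}
\|f'\|_{L^2((a,b);dx)}^2\;\leq\;K\,\Re\big(\mathfrak{q}_p^{(\theta_a,\theta_b)}(f,f)\big)+K\,\|f\|_{L^2((a,b);dx)}^2,
\end{equation*}
after which substitution into \eqref{3.10} (proved with the Neumann form on the right-hand side) upgrades it, modulo a shift in the constant $C_0$, to an infinitesimal form bound of $\mathfrak{q}_q,\mathfrak{q}_r,\mathfrak{q}_s$ with respect to $\mathfrak{q}_p^{(\theta_a,\theta_b)}$ for every $\theta_a,\theta_b\in\bbS_\pi$.

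The main obstacle is precisely this extension from $\{0,\pi/2\}^2$ to $\bbS_\pi^2$: one must control $\|f'\|_{L^2}^2$ by $\Re\big(\mathfrak{q}_p^{(\theta_a,\theta_b)}(f,f)\big)$ up to a lower-order term despite the possibly indefinite and $\theta$-dependent boundary terms, and this is where the trace inequality together with a careful tracking of constants is essential. Once this uniform gradient control is in hand, the verification that the form sum is closed and sectorial reduces to \cite[Theorem VI.1.33]{Ka80}, exactly as in the proofs of Propositions~\ref{p2.2} and~\ref{p4.2}.
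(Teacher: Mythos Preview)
Your argument is correct. The first part, establishing \eqref{3.10} for $\theta_a,\theta_b\in\{0,\pi/2\}$, matches the paper exactly: both invoke Theorem~\ref{tb.1} and replay the chain \eqref{2.16}--\eqref{2.25} from Proposition~\ref{p2.2}.

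For the second part the two proofs differ in organization, though they rely on the same pointwise bound \eqref{b.1} from Theorem~\ref{tb.1}. You take $\mathfrak{q}_p^{(\theta_a,\theta_b)}$ itself as the base form and use the trace inequality to recover gradient control $\|f'\|_{L^2}^2\leq K\,\Re\big(\mathfrak{q}_p^{(\theta_a,\theta_b)}(f,f)\big)+K\|f\|_{L^2}^2$, thereby extending \eqref{3.10} to all $\theta_a,\theta_b\in\bbS_\pi$ before invoking \cite[Theorem~VI.1.33]{Ka80}. The paper instead keeps the Neumann form $\mathfrak{q}_p^{(\pi/2,\pi/2)}$ as the base and treats the boundary terms $\mathfrak{q}_{\theta_a}(f,g)=-\cot(\theta_a)\ol{f(a)}g(a)$, $\mathfrak{q}_{\theta_b}(f,g)=-\cot(\theta_b)\ol{f(b)}g(b)$ as two further infinitesimally small perturbations alongside $\mathfrak{q}_q,\mathfrak{q}_r,\mathfrak{q}_s$, again via \eqref{b.1}. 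Your route has the mild bonus of producing an explicit analogue of \eqref{3.10} for general $\theta_a,\theta_b$; the paper's route is marginally cleaner because it never needs to solve for $\|f'\|_{L^2}^2$ in terms of $\Re\big(\mathfrak{q}_p^{(\theta_a,\theta_b)}\big)$. Either way the conclusion follows from \cite[Theorem~VI.1.33]{Ka80}.
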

\begin{proof}
Fixing $\phi\in\{q,r,s\}$, applying Theorem \ref{tb.1} (cf.~\eqref{b.2}) and repeating the arguments of \eqref{2.7}--\eqref{2.24}, one infers that in the cases $\theta_a,\theta_b\in \{0,\pi/2\}$, $\mathfrak{q}_{\phi}$ is infinitesimally form bounded with respect to $\mathfrak{q}_p^{(\theta_a,\theta_b)}$ and that a form bound of the type in \eqref{3.10} holds.

For the claim that the form defined in \eqref{3.11} is closed, sectorial, and densely defined, we consider only the case $\theta_a\neq 0$ and $\theta_b\neq 0$; the degenerate cases $\theta_a=0$ or $\theta_b=0$ can be handled similarly.  Then $\mathfrak{q}_{p,q,r,s}^{(\theta_a,\theta_b)}$ may be viewed as a form sum perturbation of $\mathfrak{q}_p^{(\pi/2,\pi/2)}$ by the sum of $\mathfrak{q}_{q}+\mathfrak{q}_{r}+\mathfrak{q}_{s}$ with the following densely defined and closed forms
\begin{equation}\lb{3.12}
\begin{split}
\mathfrak{q}_{\theta_a}(f,g)=-\cot(\theta_a)\ol{f(a)}g(a), \quad \mathfrak{q}_{\theta_b}(f,g)=-\cot(\theta_b)\ol{f(b)}g(b),&\\
\dom(\mathfrak{q}_{\theta_a}) = \dom(\mathfrak{q}_{\theta_b})=AC([a,b]).&
\end{split}
\end{equation}
By Theorem \ref{tb.1}, each of the forms in \eqref{3.12} is infinitesimally bounded with respect to $\mathfrak{q}_p^{(\pi/2,\pi/2)}$.  Consequently, 
\begin{equation}\lb{3.13}
\mathfrak{q}_{q}+\mathfrak{q}_{r}+\mathfrak{q}_{s}+\mathfrak{q}_{\theta_a}+\mathfrak{q}_{\theta_b} 
\, \text{ with domain $AC([a,b])$}
\end{equation}
is infinitesimally bounded with respect to $\mathfrak{q}_p^{(\pi/2,\pi/2)}$ and subsequently 
$\mathfrak{q}_{q,r,s}^{(\theta_a,\theta_b)}$  is closed, sectorial, and densely defined applying 
\cite[Theorem VI.1.33]{Ka80}.  One handles the special cases $\theta_a=0$ or $\theta_b=0$ in a similar manner.
\end{proof}

By the 1st representation theorem \cite[Theorem VI.2.1]{Ka80}, one can uniquely associate to the 
sesquilinear form $\mathfrak{q}_{p,q,r,s}^{(\theta_a,\theta_b)}$ an m-sectorial operator.  We denote this operator by $L_{p,q,r,s}^{(\theta_a,\theta_b)}$.  One notes that (cf.~\eqref{3.3}--\eqref{3.6} and \eqref{3.11})
\begin{equation}\lb{3.14}
\mathfrak{q}_p^{(\theta_a,\theta_b)}=\mathfrak{q}_{p,0,0,0}^{(\theta_a,\theta_b)}, \quad \theta_a,\theta_b\in \bbS_{\pi}.
\end{equation}

In our first result, we isolate the two special cases corresponding to the choices $\theta_a=\theta_b=0$ (i.e., Dirichlet boundary conditions) or $\theta_a=\theta_b=\pi/2$ (i.e., Neumann boundary conditions). 
Before stating our result, we recall the following fundamental result which is a special case of results 
obtained in \cite{AMN97} and \cite{AT92}.

\begin{theorem}\lb{t3.3a}$($\cite[Thm.~6.1, Cases I\,$(iii)$,~I\,$(iv)$ on p.~685]{AMN97}$;$ \cite[(2.2d), (2.2e), and (2.2f)]{AT92}$)$
If $p\in L^{\infty}((a,b);dx)$ satisfies \eqref{3.1} for some constants $0<\lambda\leq \Lambda$, then
\begin{equation}
\dom \Big(\big(L_{p,0,0,0}^{(\theta_a,\theta_b)}\big)^{1/2}\Big)=\dom\big(\mathfrak{q}_{p,0,0,0}^{(\theta_a,\theta_b)} \big),\quad \theta_a,\theta_b\in \{0,\pi/2\}.
\end{equation}
\end{theorem}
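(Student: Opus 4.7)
My plan is to organize the proof around Theorem~\ref{t1.2} (Kato--Lions): it suffices to prove the single equality $\dom\big((L_{p,0,0,0}^{(\theta_a,\theta_b)})^{1/2}\big)=\dom\big(((L_{p,0,0,0}^{(\theta_a,\theta_b)})^{*})^{1/2}\big)$, whereupon the common value automatically coincides with $\dom(\mathfrak{q}_{p,0,0,0}^{(\theta_a,\theta_b)})$. Since the boundary parameters $\theta_a,\theta_b\in\{0,\pi/2\}$ correspond to self-conjugate boundary conditions, a direct form computation gives $(L_{p,0,0,0}^{(\theta_a,\theta_b)})^{*}=L_{\overline{p},0,0,0}^{(\theta_a,\theta_b)}$, and $\overline{p}$ satisfies the same hypotheses with the same constants $\lambda,\Lambda$. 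Thus it is enough to exhibit one inclusion, uniformly over all admissible $p$.

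\textbf{Reduction to the whole-line result.} Next, I would exploit the already-established real-line analogue. Extend $p$ to $\widetilde{p}:\bbR\to\bbC$ by setting $\widetilde{p}\equiv 1$ outside $(a,b)$, so $\widetilde{p}$ satisfies Hypothesis~\ref{h2.1}(i) with constants $\min(\lambda,1)$ and $\max(\Lambda,1)$. By Theorem~\ref{t2.3}, $\dom(L_{\widetilde{p},0,0,0}^{1/2})=\dom((L_{\widetilde{p},0,0,0}^{*})^{1/2})=W^{1,2}(\bbR)$. The remaining task is to transport this identification down to $(a,b)$ while respecting the boundary conditions. In the Dirichlet case ($\theta_a=\theta_b=0$), extension by zero $E:W^{1,2}_0((a,b))\hookrightarrow W^{1,2}(\bbR)$ is a continuous isometric embedding into the form domain, and a short computation shows $\mathfrak{q}_{\widetilde{p},0,0,0}(Ef,Eg)=\mathfrak{q}_{p,0,0,0}^{(0,0)}(f,g)$, with the analogous identity for the adjoint form. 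I would use this form-compatibility to realize $L_{p,0,0,0}^{(0,0)}$ as the compression of $L_{\widetilde{p},0,0,0}$ to the subspace of $L^2((a,b);dx)$ identified with functions supported in $[a,b]$, and thereby descend the square-root identity from $\bbR$ to $(a,b)$. In the Neumann case ($\theta_a=\theta_b=\pi/2$), the corresponding device is even reflection about each endpoint: reflecting $p$ across $a$ yields a coefficient on $(2a-b,b)$ still obeying \eqref{3.1}, and even reflection of $W^{1,2}((a,b))$-functions produces $W^{1,2}((2a-b,b))$-functions, converting the Neumann condition at $a$ into no condition; a further reflection across $b$ (or a partition-of-unity argument combined with the half-line result of Theorem~\ref{t4.3}) completes the reduction to the real-line case. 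The mixed cases $(0,\pi/2)$ and $(\pi/2,0)$ combine both recipes endpoint-by-endpoint.

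\textbf{Main obstacle.} The principal difficulty lies in the Neumann case. Extension by zero cleanly compresses forms in the Dirichlet setting, but reflection requires verifying both that the reflected coefficient continues to satisfy the uniform ellipticity bound in \eqref{3.1} and, more seriously, that the square-root domain on the enlarged interval \emph{restricts} to exactly $W^{1,2}((a,b))$ on $(a,b)$. The latter point is subtle because restriction interacts with boundary regularity in a way that is not automatic from the compression picture alone; its clean resolution would likely invoke Theorem~\ref{t1.2} once more at the level of the restricted operator, together with the fact that the restriction map is a bounded surjection $W^{1,2}(\bbR)\to W^{1,2}((a,b))$ intertwining the relevant forms. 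A fully rigorous proof in one dimension ultimately leans on the quadratic-estimate/$H^\infty$-functional calculus machinery (substantially simpler on the line than in higher dimensions but still nontrivial), which is the route taken in the references cited in the statement; the strategic reduction above captures the conceptual flow, while this transfer step is where the genuine analytic work lies.
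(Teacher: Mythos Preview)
The paper does not supply its own proof of this statement. Theorem~\ref{t3.3a} is quoted verbatim from the literature (\cite{AMN97}, \cite{AT92}) and used as a black-box input for the paper's new result, Theorem~\ref{t3.5}. There is therefore no proof in the paper to compare your proposal against.

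As for the proposal itself: your reduction-to-$\bbR$ strategy via extension by zero (Dirichlet) and even reflection (Neumann) is a natural idea, but it does not by itself produce a proof, and you correctly flag this at the end. The gap is genuine: square roots do not commute with compression or restriction. Knowing that $\dom\big(L_{\widetilde p,0,0,0}^{1/2}\big)=W^{1,2}(\bbR)$ tells you that the quadratic estimate $\|L_{\widetilde p,0,0,0}^{1/2}f\|\approx\|f'\|$ holds on $W^{1,2}(\bbR)$, but it does not automatically descend to the estimate $\|(L_{p,0,0,0}^{(0,0)})^{1/2}f\|\approx\|f'\|$ on $W^{1,2}_0((a,b))$, because $(L_{p,0,0,0}^{(0,0)})^{1/2}$ is \emph{not} the compression of $L_{\widetilde p,0,0,0}^{1/2}$ to $L^2((a,b);dx)$. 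Making such a transference rigorous requires either pushing the quadratic estimates (equivalently, the bounded $H^\infty$ functional calculus) through the extension/reflection maps, or reproving them directly on the interval; either route is essentially the content of the cited references. Your sketch therefore locates the analytic core of the result correctly but does not bypass it.
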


Our principal new result in the case of Dirichlet or Neumann boundary conditions then reads as follows.

\begin{theorem}\lb{t3.5}
Assume Hypothesis \ref{h3.1}.  Then
\begin{align}\lb{3.56}
\begin{split} 
& \dom \Big(\big(L_{p,q,r,s}^{(\theta_a,\theta_b)}\big)^{1/2}\Big) 
= \dom \Big(\big(\big(L_{p,q,r,s}^{(\theta_a,\theta_b)}\big)^*\big)^{1/2}\Big)    \\ 
& \quad = \dom\big(\mathfrak{q}_{p,q,r,s}^{(\theta_a,\theta_b)} \big) = \dom\big(\mathfrak{q}_{1,0,0,0}^{(\theta_a,\theta_b)} \big), \quad \theta_a,\theta_b\in \{0,\pi/2\}. 
\end{split} 
\end{align} 
\end{theorem}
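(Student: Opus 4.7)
\textbf{Plan for the proof of Theorem \ref{t3.5}.}

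The last equality in \eqref{3.56} is immediate: inspecting \eqref{3.3}--\eqref{3.6} and \eqref{3.11} shows that for $\theta_a,\theta_b\in\{0,\pi/2\}$ the form domain $\dom(\mathfrak{q}_{p,q,r,s}^{(\theta_a,\theta_b)}) = \dom(\mathfrak{q}_p^{(\theta_a,\theta_b)})$ depends only on the boundary parameters (it is $W^{1,2}((a,b))$, $W^{1,2}_0((a,b))$, or one of the two one-sided Sobolev spaces $\{h\in W^{1,2}((a,b))\,|\,h(a)=0\}$, $\{h\in W^{1,2}((a,b))\,|\,h(b)=0\}$), and thus coincides with $\dom(\mathfrak{q}_{1,0,0,0}^{(\theta_a,\theta_b)})$. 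Moreover, the form identity $\overline{\mathfrak{q}_{p,q,r,s}^{(\theta_a,\theta_b)}(g,f)} = \mathfrak{q}_{\overline{p},\overline{q},\overline{r},\overline{s}}^{(\theta_a,\theta_b)}(f,g)$ (using that $\cot(\theta_a),\cot(\theta_b)$ are real in these four cases) gives $(L_{p,q,r,s}^{(\theta_a,\theta_b)})^* = L_{\overline{p},\overline{q},\overline{r},\overline{s}}^{(\theta_a,\theta_b)}$, so once we have established
\[
\dom\Big(\big(L_{p,q,r,s}^{(\theta_a,\theta_b)}\big)^{1/2}\Big)=\dom\big(\mathfrak{q}_p^{(\theta_a,\theta_b)}\big)
\]
for all admissible coefficients, the corresponding adjoint statement follows by passing to $\overline{p},\overline{q},\overline{r},\overline{s}$.

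To obtain this last identity I would run the same two-step reduction used to prove Theorems \ref{t2.5} and \ref{t4.4}, with Theorem \ref{t3.3a} replacing Theorems \ref{t2.3} and \ref{t4.3} as the base input. In the first step I would treat $s=0$. Following \eqref{2.36} and \eqref{4.22}, express $L_{p,q,r,0}^{(\theta_a,\theta_b)}$ via the Kato resolvent identity as an additive perturbation of $L_{p,0,0,0}^{(\theta_a,\theta_b)}$ through the factorization $B_1^*A_1$ on the auxiliary Hilbert space $L^2((a,b);dx)^2$, where
\[
A_1 f = \begin{pmatrix} f' \\ e^{i\text{Arg}(q)}|q|^{1/2} f \end{pmatrix}, \qquad
B_1 f = \begin{pmatrix} \overline{r}\, f \\ |q|^{1/2} f \end{pmatrix},
\]
both defined on $\dom(\mathfrak{q}_p^{(\theta_a,\theta_b)})$. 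Using Theorem \ref{t3.3a} to identify $\dom((L_{p,0,0,0}^{(\theta_a,\theta_b)})^{1/2})$ with the form domain, I would next combine it with the Trudinger-type estimates of Theorem \ref{tb.1} (which already produced \eqref{3.10} in Proposition \ref{p3.2}) to prove the uniform bound
$\|A_1(L_{p,0,0,0}^{(\theta_a,\theta_b)}+EI)^{-1/2}\|\leq C_1$ and the decay $\|B_1((L_{p,0,0,0}^{(\theta_a,\theta_b)})^*+EI)^{-1/2}\|\leq C_2 E^{-1/4}$ for $E$ large, mimicking \eqref{2.38}--\eqref{2.44}. This gives $1\in\rho(\overline{A_1(L_{p,0,0,0}^{(\theta_a,\theta_b)}+EI)^{-1}B_1^*})$ for $E$ large, so Corollary \ref{cd.5a} applies and produces $\dom((L_{p,q,r,0}^{(\theta_a,\theta_b)})^{1/2}) = \dom(\mathfrak{q}_p^{(\theta_a,\theta_b)})$.

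In the second step I would bring in general $s$ by writing a second Kato-type resolvent identity viewing $L_{p,q,r,s}^{(\theta_a,\theta_b)}$ as a perturbation of $L_{p,q,r,0}^{(\theta_a,\theta_b)}$ via the one-dimensional factorization $B_2^*A_2$ on $L^2((a,b);dx)$ with $A_2 f = -sf$, $B_2 f = f'$, both defined on $\dom(\mathfrak{q}_p^{(\theta_a,\theta_b)})$. Establishing the analogue of \eqref{3.35.0} is strictly easier than in Step~1 because the auxiliary space is one-dimensional rather than two-dimensional, and the first factor $\|A_2(\cdot)^{-1/2}\|$ decays like a negative power of $E$ thanks to the $L^2$-integrability of $s$ and Theorem \ref{tb.1}. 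A closing application of Corollary \ref{cd.5a}, exactly as at the end of the proof of Theorem \ref{t4.4}, yields $\dom((L_{p,q,r,s}^{(\theta_a,\theta_b)})^{1/2}) = \dom((L_{p,q,r,0}^{(\theta_a,\theta_b)})^{1/2}) = \dom(\mathfrak{q}_p^{(\theta_a,\theta_b)})$.

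The principal obstacle I expect is delivering the $E^{-1/4}$ decay in Step~1 uniformly in all four boundary-condition choices. Concretely, this requires that the Trudinger-type bound of Theorem \ref{tb.1} be coupled with a norm-equivalence between $\mathfrak{q}_p^{(\theta_a,\theta_b)}$ and $\mathfrak{q}_1^{(\theta_a,\theta_b)}$ (analogous to \eqref{2.41}), and one must check case-by-case that the factorizations $B_j^*A_j$ ($j=1,2$) preserve the boundary conditions built into $\dom(\mathfrak{q}_p^{(\theta_a,\theta_b)})$, so that the domain-inclusion hypotheses underlying Corollary \ref{cd.5a} hold verbatim for each $(\theta_a,\theta_b)\in\{0,\pi/2\}^2$. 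Once these compatibility checks are in place, the remainder is a direct adaptation of the computations \eqref{2.36}--\eqref{2.46} and \eqref{2.48}--\eqref{2.51} (equivalently, \eqref{4.22}--\eqref{4.31} and \eqref{4.33}--\eqref{4.36}) to the bounded interval.
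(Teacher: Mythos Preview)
Your proposal is correct and follows essentially the same approach as the paper: the two-step reduction via the factorizations $B_1^*A_1$ and $B_2^*A_2$, with Theorem~\ref{t3.3a} as base input and two applications of Corollary~\ref{cd.5a}, matches the paper's argument line by line. The only cosmetic difference is that the paper takes $\dom(A_j)=\dom(B_j)=W^{1,2}((a,b))$ uniformly (rather than the form domain $\dom(\mathfrak{q}_p^{(\theta_a,\theta_b)})$), which slightly simplifies the domain-inclusion checks you flag as an obstacle, since \eqref{3.68} and \eqref{3.75} then hold automatically for all four boundary-condition choices.
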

\begin{proof}
The last equality in \eqref{3.56} follows by definition and the elementary observation in \eqref{3.14}, 
since the domain of $q_p^{(\theta_a,\theta_b)}$ is independent of $p$. The proof of 
$\dom \Big(\big(L_{p,q,r,s}^{(\theta_a,\theta_b)}\big)^{1/2}\Big) 
= \dom\big(\mathfrak{q}_{p,q,r,s}^{(\theta_a,\theta_b)} \big)$ in \eqref{3.56} is essentially identical to the proof of Theorem \ref{t2.5} so we omit technical details, and only provide a sketch of the proof. 
Fixing $\theta_a,\theta_b\in \{0,\pi/2\}$, and mimicking the two-step process used in the proof 
of Theorem \ref{t2.5}, one first proves 
\begin{equation}\lb{3.58}
\dom \Big(\big(L_{p,q,r,0}^{(\theta_a,\theta_b)}\big)^{1/2}\Big)=\dom\big(\mathfrak{q}_{p,q,r,0}^{(\theta_a,\theta_b)} \big).
\end{equation}
In order to do so, note that a resolvent equation analogous to \eqref{2.36} holds for $L_{p,q,r,0}^{(\theta_a,\theta_b)}$ and $L_{p,0,0,0}^{(\theta_a,\theta_b)}$, and it reads
\begin{align}
&\big(L_{p,q,r,0}^{(\theta_a,\theta_b)}-zI_{L^2((a,b);dx)}\big)^{-1}=\big(L_{p,0,0,0}^{(\theta_a,\theta_b)}-zI_{L^2((a,b);dx)}\big)^{-1}    \no \\
&\quad-\ol{\big(L_{p,0,0,0}^{(\theta_a,\theta_b)}-zI_{L^2((a,b);dx)}\big)^{-1}B_1^*}\no\\
&\qquad \times\bigg[I_{L^2((a,b);dx)}+\ol{A_1\big(L_{p,0,0,0}^{(\theta_a,\theta_b)}-zI_{L^2((a,b);dx)}\big)^{-1}B_1^*} \bigg]^{-1}\no\\
&\qquad \times A_1\big(L_{p,0,0,0}^{(\theta_a,\theta_b)}-zI_{L^2((a,b);dx)}\big)^{-1},    \lb{3.59} \\
&\hspace*{1.2cm}z\in \bigg\{\zeta\in \rho\big(L_{p,0,0,0}^{(\theta_a,\theta_b)}\big)\, \bigg| \, 1\in \rho\bigg(\ol{A_1\big(L_{p,0,0,0}^{(\theta_a,\theta_b)}-\zeta I_{L^2(\bbR;dx)}\big)^{-1}B_1^*} \bigg)\bigg\},\no
\end{align}
where the operators $A_1$ and $B_1$ are defined by 
\begin{align}
\begin{split}
&A_1:L^2((a,b);dx)\rightarrow L^2((a,b);dx)^2,\quad A_1f=\begin{pmatrix}
f' \\
e^{i\text{Arg}(q)}|q|^{1/2}f
\end{pmatrix},\\
&B_1:L^2((a,b);dx)\rightarrow L^2((a,b);dx)^2,\quad B_1f=\begin{pmatrix}
\ol{r}f \\
|q|^{1/2}f
\end{pmatrix},
\end{split}\lb{3.60}\\
&\hspace*{2.45cm}f\in \dom(A_1)=\dom(B_1)=W^{1,2}((a,b)).\no
\end{align}
The operators $A_1$ and $B_1$ are closed, densely defined, linear operators from the Hilbert space $L^2((a,b);dx)$ to $L^2((a,b);dx)^2$, with
\begin{equation}\lb{3.61}
\dom\big(L_{p,0,0,0}^{(\theta_a,\theta_b)} \big)\subset \dom(A_1) \, \text{ and } 
\, \dom\big(\big(L_{p,0,0,0}^{(\theta_a,\theta_b)} \big)^*\big)\subset \dom(B_1).
\end{equation}
Analogous to \eqref{2.39}, one estimates
\begin{equation}\lb{3.62}
\Big\|A_1\big(L_{p,0,0,0}^{(\theta_a,\theta_b)}+EI_{L^2((a,b);dx)} \big)^{-1/2} \Big\|_{\cB(L^2((a,b);dx),L^2((a,b);dx)^2)}\leq C_0, \quad E>E_0,
\end{equation}
for constants $E_0>0$ and $C_0>0$, by applying Lemma \ref{2.4} and Theorem \ref{t3.3a}.  In addition, utilizing \eqref{3.10}, one can mimic the proof of Theorem \ref{ta.1} to prove a statement analogous to \eqref{a.1} which states
\begin{equation}\lb{3.63}
\begin{split}
\Big\| \phi\big(L_{1,0,0,0}^{(\theta_a,\theta_b)}+EI_{L^2((a,b);dx)}\big)^{-1/2} \Big\|_{\cB(L^2(\bbR;dx))}\leq C_1E^{-1/4},&\\
E>E_1,\, \phi\in\big\{|r|,|s|,|q|^{1/2}\big\},&
\end{split}
\end{equation}
for constants $C_1>0$, and $E_1\geq 1$.  Consequently, \eqref{3.63} implies
\begin{align}
&\Big\|B_1\big(\big(L_{p,0,0,0}^{(\theta_a,\theta_b)}\big)^*+EI_{L^2((a,b);dx)}\big)^{-1/2}f\Big\|_{L^2((a,b);dx)^2}\no\\
& \quad \leq \Big\{\Big\||r|\big(L_{1,0,0,0}^{(\theta_a,\theta_b)}+EI_{L^2((a,b);dx)}\big)^{-1/2} \Big\|_{\cB(L^2((a,b);dx))} \no\\
&\qquad+\Big\||q|^{1/2}\big(L_{1,0,0,0}^{(\theta_a,\theta_b)}+EI_{L^2((a,b);dx)}\big)^{-1/2} \Big\|_{\cB(L^2((a,b);dx))}\Big\}\no\\
&\qquad \quad  \times M_1\|f\|_{L^2((a,b);dx)}\no\\
& \quad \leq C_2E^{-1/4}\|f\|_{L^2((a,b);dx)},\quad f\in L^2((a,b);dx),\, E>E_1,\lb{3.64}
\end{align}
for $E$-independent constants $M_1>0$ and $C_2>0$.  In \eqref{3.64}, we have used Lemma \ref{l2.4} and Theorem \ref{t3.3a}, noting that 
\begin{equation}\lb{3.65}
\dom \Big(\big(\big(L_{p,0,0,0}^{(\theta_a,\theta_b)}\big)^*\big)^{1/2}\Big) = \dom \Big(\big(L_{1,0,0,0}^{(\theta_a,\theta_b)}\big)^{1/2}\Big).
\end{equation}
Consequently, applying \eqref{3.62} and \eqref{3.64}, one obtains the estimate,
\begin{align}
&\Big\|\ol{A_1\big(L_{p,0,0,0}^{(\theta_a,\theta_b)}+EI_{L^2((a,b);dx)}\big)^{-1}B_1^*}\Big\|_{\cB(L^2((a,b);dx)^2)}\no\\
&\quad \leq \Big\|A_1\big(L_{p,0,0,0}^{(\theta_a,\theta_b)}+EI_{L^2((a,b);dx)}\big)^{-1/2}\Big\|_{\cB(L^2((a,b);dx),L^2((a,b);dx)^2)}\no\\
&\quad \quad \times \Big\|\ol{\big(L_{p,0,0,0}^{(\theta_a,\theta_b)}+EI_{L^2((a,b);dx)}\big)^{-1/2}B_1^*}\Big\|_{\cB(L^2((a,b);dx)^2,L^2((a,b);dx))}\no\\
&\quad \leq C_0C_2E^{-1/4}, \quad E>\max\{E_0,E_1\},\lb{3.66}
\end{align}
which proves
\begin{equation}\lb{3.67}
1\in \rho\Big(\ol{A_1\big(L_{p,0,0,0}^{(\theta_a,\theta_b)}+EI_{L^2((a,b);dx)}\big)^{-1}B_1^*} \Big), 
\end{equation}
for $E > 0$ sufficiently large. Subsequently, since (cf. Theorem \ref{t3.3a} and \eqref{3.60})
\begin{equation}\lb{3.68}
\dom \Big(\big(L_{p,0,0,0}^{(\theta_a,\theta_b)} \big)^{1/2}\Big) \subseteq \dom(A_1), \quad \dom \Big(\big(\big(L_{p,0,0,0}^{(\theta_a,\theta_b)} \big)^*\big)^{1/2}\Big) \subseteq \dom(B_1),
\end{equation}
we may apply Corollary \ref{cd.5a}, yielding
\begin{equation}\lb{3.69}
\dom \Big(\big(L_{p,q,r,0}^{(\theta_a,\theta_b)}\big)^{1/2}\Big)=\dom \Big(\big(L_{p,0,0,0}^{(\theta_a,\theta_b)}\big)^{1/2}\Big)=\dom\big(\mathfrak{q}_{p,0,0,0}^{(\theta_a,\theta_b)} \big),
\end{equation}
and subsequently the equality in \eqref{3.58}.

In the final stage of the proof, we proceed to prove the general statement, that is, 
\begin{equation}
\dom \Big(\big(L_{p,q,r,s}^{(\theta_a,\theta_b)}\big)^{1/2}\Big) 
= \dom\big(\mathfrak{q}_{p,q,r,s}^{(\theta_a,\theta_b)} \big),  \quad 
\theta_a,\theta_b \in \{0, \pi/2\}.     \lb{3.69a} 
\end{equation} 
Having shown \eqref{3.58}, we again apply Corollary \ref{cd.5a} to show 
\begin{equation}\lb{3.70}
\dom \Big(\big(L_{p,q,r,s}^{(\theta_a,\theta_b)}\big)^{1/2}\Big) = \dom \Big(\big(L_{p,q,r,0}^{(\theta_a,\theta_b)}\big)^{1/2}\Big).
\end{equation}
Then, \eqref{3.69a} follows from \eqref{3.69} and the fact that 
\begin{equation}\lb{3.71}
\dom\big(\mathfrak{q}_{p,0,0,0}^{(\theta_a,\theta_b)} \big) = \dom\big(\mathfrak{q}_{p,q,r,s}^{(\theta_a,\theta_b)} \big).
\end{equation}
One notes that
\begin{align}
&\big(L_{p,q,r,s}^{(\theta_a,\theta_b)}-zI_{L^2((a,b);dx)}\big)^{-1} 
= \big(L_{p,q,r,0}^{(\theta_a,\theta_b)}-zI_{L^2((a,b);dx)}\big)^{-1}    \no \\
&\quad-\ol{\big(L_{p,q,r,0}^{(\theta_a,\theta_b)}-zI_{L^2((a,b);dx)}\big)^{-1}B_2^*}\no\\
&\qquad \times\bigg[I_{L^2((a,b);dx)}+\ol{A_2\big(L_{p,q,r,0}^{(\theta_a,\theta_b)}-zI_{L^2((a,b);dx)}\big)^{-1}B_2^*} \bigg]^{-1}\no\\
&\qquad \times A_2\big(L_{p,q,r,0}^{(\theta_a,\theta_b)}-zI_{L^2((a,b);dx)}\big)^{-1},    \lb{3.72} \\
&\hspace*{1.2cm}z\in \Big\{\zeta\in \rho\big(L_{p,q,r,0}^{(\theta_a,\theta_b)}\big)\, \Big| \, 1\in \rho\Big(\ol{A_2\big(L_{p,q,r,0}^{(\theta_a,\theta_b)}-\zeta I_{L^2((a,b);dx)}\big)^{-1}B_2^*} \Big)\Big\},\no
\end{align}
where the operators $A_2,B_2:L^2((a,b);dx)\rightarrow L^2((a,b);dx)$ are defined by 
\begin{equation}\lb{3.73}
A_2f=-sf, \quad B_2f=f',\quad
\dom(A_2)=\dom(B_2)=W^{1,2}((a,b)).
\end{equation}
Indeed, one can again prove a decay estimate for $A_2$, $B_2$, and $L_{p,q,r,0}^{(\theta_a,\theta_b)}$ of the type
\begin{align}
&\Big\|\ol{A_2\big(L_{p,q,r,0}^{(\theta_a,\theta_b)}+EI_{L^2((a,b);dx)}\big)^{-1}B_2^*}\Big\|_{\cB(L^2((a,b);dx))}\no\\
&\quad \leq \Big\|A_2\big(L_{p,q,r,0}^{(\theta_a,\theta_b)}+EI_{L^2((a,b);dx)}\big)^{-1/2}\Big\|_{\cB(L^2((a,b);dx))}\no\\
&\quad \quad \times \Big\|\ol{\big(L_{p,q,r,0}^{(\theta_a,\theta_b)}+EI_{L^2((a,b);dx)}\big)^{-1/2}B_2^*}\Big\|_{\cB(,L^2((a,b);dx))}\no\\
&\quad \leq CE^{-\alpha}, \quad E>E_0,
\end{align}
for appropriate constants $C>0$, $\alpha>0$, and $E_0>0$ to show that
\begin{equation}\lb{3.74}
1\in \rho\Big(\ol{A_2\big(L_{p,q,r,0}^{(\theta_a,\theta_b)}+EI_{L^2((a,b);dx)}\big)^{-1}B_2^*} \Big), 
\end{equation}
for $E > 0$ sufficiently large. As a result, the right-hand side of \eqref{3.72} defines the resolvent of a densely defined, closed, linear operator by \cite[Theorem 2.3]{GLMZ05}.  The operator so defined coincides with $L_{p,q,r,s}^{(\theta_a,\theta_b)}$ by Theorem \ref{te.3}.  Since
\begin{equation}\lb{3.75}
\dom \Big(\big(L_{p,q,r,0}^{(\theta_a,\theta_b)} \big)^{1/2}\Big) \subseteq \dom(A_2), \quad \dom \Big(\big(\big(L_{p,q,r,0}^{(\theta_a,\theta_b)} \big)^*\big)^{1/2}\Big) \subseteq \dom(B_2),
\end{equation}
another application of Corollary \ref{cd.5a} yields the desired result, completing the proof.
\end{proof}

Our next result complements Theorem \ref{t3.5}. In it, we study the particular case $p=1$ a.e. in 
$(a,b)$, but loosen the restrictions on the boundary condition parameters and now allow for arbitrary 
separated boundary conditions, that is, $\theta_a,\theta_b\in \bbS_{\pi}$.  The key component of the 
proof is a Krein-type resolvent identity which reduces the resolvent of $L_{1,0,0,0}^{(\theta_a,\theta_b)}$ 
to a finite rank perturbation of the resolvent of the self-adjoint operator $L_{1,0,0,0}^{(0,0)}$ for which 
equality of square root domains holds trivially. 

\begin{lemma} \lb{l3.3}
The identity 
\begin{equation}\lb{3.15}
\dom \Big(\big(L_{1,0,0,0}^{(\theta_a,\theta_b)}\big)^{1/2}\Big)=\dom \Big(\big(\big(L_{1,0,0,0}^{(\theta_a,\theta_b)}\big)^*\big)^{1/2}\Big)=\dom\big(\mathfrak{q}_{1,0,0,0}^{(\theta_a,\theta_b)} \big), 
\quad \theta_a,\theta_b\in \bbS_{\pi},
\end{equation}
holds.
\end{lemma}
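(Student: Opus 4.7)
The plan is to use a Krein-type resolvent identity expressing $\big(L_{1,0,0,0}^{(\theta_a,\theta_b)} - zI\big)^{-1}$ as a rank-at-most-two perturbation of the resolvent of the self-adjoint Dirichlet operator $L_{1,0,0,0}^{(0,0)}$, and then to combine this with the abstract stability machinery for square root domains from Corollary~\ref{cd.5a}, in parallel to the approach used for Theorems~\ref{t2.5}, \ref{t4.4}, and \ref{t3.5}. The case $\theta_a = \theta_b = 0$ is immediate: $L_{1,0,0,0}^{(0,0)}$ is self-adjoint in $L^2((a,b);dx)$, so \eqref{3.15} follows from the 2nd Representation Theorem applied to the closed, nonnegative, symmetric form $\mathfrak{q}_{1,0,0,0}^{(0,0)}$.

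For general $\theta_a, \theta_b \in \bbS_\pi$, I would construct two linearly independent solutions $u_a(\cdot,z)$ and $u_b(\cdot,z)$ of $-u'' = zu$ on $(a,b)$, with $u_j$ satisfying the boundary condition imposed by $\theta_j$ at the endpoint $j \in \{a,b\}$, together with their Dirichlet companion solutions. The explicit Green's function representations then produce a Krein-type identity
\begin{equation*}
\big(L_{1,0,0,0}^{(\theta_a,\theta_b)} - zI\big)^{-1} = \big(L_{1,0,0,0}^{(0,0)} - zI\big)^{-1} + K(z),
\end{equation*}
for $z$ in both resolvent sets, with $K(z)$ of rank at most two. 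I would then factor $K(z)$ in the form expected by Corollary~\ref{cd.5a} with auxiliary closed operators $A, B : L^2((a,b);dx) \to \bbC^2$ built from the boundary trace maps $f \mapsto (f(a), f(b))^{\top}$ and their first-quasi-derivative analogs, post-composed with bounded $2 \times 2$ matrix multiplications encoding $\cot(\theta_a)$ and $\cot(\theta_b)$.

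The technical input needed by Corollary~\ref{cd.5a} is a decay estimate of the form $\big\|\ol{A\big(L_{1,0,0,0}^{(0,0)} + EI_{L^2((a,b);dx)}\big)^{-1} B^*}\big\|_{\cB(\bbC^2)} \leq CE^{-\alpha}$ for $E > E_0$ and some $\alpha > 0$. Because $L_{1,0,0,0}^{(0,0)}$ is the Dirichlet Laplacian on a bounded interval, its resolvent kernel is available in closed form in terms of $\sinh$ and $\cosh$, so the required decay can be read off by direct computation. Applying Corollary~\ref{cd.5a} yields $\dom\big((L_{1,0,0,0}^{(\theta_a,\theta_b)})^{1/2}\big) = \dom\big(((L_{1,0,0,0}^{(\theta_a,\theta_b)})^*)^{1/2}\big)$; since $L_{1,0,0,0}^{(\theta_a,\theta_b)}$ is the m-sectorial operator associated with the densely defined, closed, sectorial form $\mathfrak{q}_{1,0,0,0}^{(\theta_a,\theta_b)}$, Theorem~\ref{t1.2} (Kato--Lions) then forces this common square root domain to equal $\dom(\mathfrak{q}_{1,0,0,0}^{(\theta_a,\theta_b)})$.

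The main obstacle I anticipate lies in the mismatch between $\dom(\mathfrak{q}_{1,0,0,0}^{(0,0)}) = W^{1,2}_0((a,b))$ and $\dom(\mathfrak{q}_{1,0,0,0}^{(\theta_a,\theta_b)}) = W^{1,2}((a,b))$ (for $\theta_a,\theta_b \in \bbS_\pi \setminus \{0\}$): the rank-at-most-two Krein perturbation must also effect a finite-dimensional \emph{enlargement} of the square root domain, and arranging the $A, B$ factorization so that Corollary~\ref{cd.5a} delivers this enlargement, rather than a literal coincidence of the two square root domains of $L_{1,0,0,0}^{(\theta_a,\theta_b)}$ and $L_{1,0,0,0}^{(0,0)}$, is the central delicate step. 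The mixed cases in which one of $\theta_a, \theta_b$ equals $0$ would be handled by first fixing the degenerate endpoint using Theorem~\ref{t3.5} as the base case and then moving the nonzero angle by a rank-one Krein perturbation.
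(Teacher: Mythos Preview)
Your proposal contains a genuine gap that you yourself flag but do not resolve: Corollary~\ref{cd.5a} always yields the \emph{equality} $\dom(T^{1/2}) = \dom(T_0^{1/2})$, never an enlargement. If you take $T_0 = L_{1,0,0,0}^{(0,0)}$ (Dirichlet) and $T = L_{1,0,0,0}^{(\theta_a,\theta_b)}$ with $\theta_a,\theta_b \neq 0$, the desired conclusion is $\dom(T^{1/2}) = W^{1,2}((a,b))$, whereas $\dom(T_0^{1/2}) = W_0^{1,2}((a,b))$. No choice of factorization $B^*A$ can make Corollary~\ref{cd.5a} output anything other than $W_0^{1,2}((a,b))$; the abstract result is simply not designed to change the form domain. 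So the ``central delicate step'' you anticipate is in fact an obstruction, not a technicality.

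The paper avoids this obstruction by \emph{not} invoking the abstract stability machinery at all for this lemma. It still uses the Krein-type resolvent identity you describe (reducing $(L_{1,0,0,0}^{(\theta_a,0)} - zI)^{-1}$ to a rank-one perturbation of the Dirichlet resolvent), but then it proceeds directly: it writes down the integral kernel of $(L_{1,0,0,0}^{(\theta_a,0)} + EI)^{-1/2}$ via the formula $R^{1/2}(x,x') = \pi^{-1}\int_0^\infty t^{-1/2} G(-(t+E),x,x')\,dt$, bounds the rank-one correction term explicitly (by modified Bessel functions $K_0$), and then checks by hand that any $g = (L_{1,0,0,0}^{(\theta_a,0)} + EI)^{-1/2}f$ is absolutely continuous, has $g' \in L^2$, and satisfies $g(b)=0$, i.e., lies in $\dom(\mathfrak{q}_{1,0,0,0}^{(\theta_a,0)})$. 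Together with $(L_{1,0,0,0}^{(\theta_a,\theta_b)})^* = L_{1,0,0,0}^{(\ol{\theta_a},\ol{\theta_b})}$ and Kato's corollary (Theorem~\ref{t1.2}), the one-sided inclusion $\dom((L^{(\theta_a,\theta_b)})^{1/2}) \subseteq \dom(\mathfrak{q}^{(\theta_a,\theta_b)})$ suffices. The explicit kernel computation is exactly what lets the argument track the two-dimensional change in the form domain that the abstract corollary cannot see. If you want to salvage an abstract approach, a more natural base point would be the Neumann operator $L_{1,0,0,0}^{(\pi/2,\pi/2)}$, whose form domain already equals $W^{1,2}((a,b))$; then the Robin boundary terms are a genuine form perturbation on the \emph{same} domain and Corollary~\ref{cd.5a} has a chance, though you would still need to verify the decay conditions for boundary-trace factorizations.
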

\begin{proof}
In order to establish \eqref{3.15}, it suffices to prove that
\begin{equation}\lb{3.16}
\dom \Big(\big(L_{1,0,0,0}^{(\theta_a,\theta_b)}\big)^{1/2}\Big)\subseteq \dom\big(\mathfrak{q}_{1,0,0,0}^{(\theta_a,\theta_b)} \big),\quad  \theta_a,\theta_b\in \bbS_{\pi}.
\end{equation}
Indeed, if \eqref{3.16} holds for all $\theta_a, \theta_b \in \bbS_{\pi}$, one infers for any given pair $\theta_a$, $\theta_b$ that
\begin{align}
\dom \Big(\big(\big(L_{1,0,0,0}^{(\theta_a,\theta_b)}\big)^*\big)^{1/2}\Big)\subseteq \dom\big(\mathfrak{q}_{1,0,0,0}^{(\ol{\theta_a},\ol{\theta_b})} \big) = \dom\big(\mathfrak{q}_{1,0,0,0}^{(\theta_a,\theta_b)} \big),\lb{3.17a}
\end{align}
directly making use of \eqref{3.16} and the facts,
\begin{align}
& \big(L_{1,0,0,0}^{(\theta_a,\theta_b)}\big)^* = L_{1,0,0,0}^{(\ol{\theta_a},\ol{\theta_b})}, 
\quad  \theta_a,\theta_b\in \bbS_{\pi},    \lb{3.17aa}\\
& \dom\big(\mathfrak{q}_{1,0,0,0}^{(\ol{\theta_a},\ol{\theta_b})} \big) 
= \dom\big(\mathfrak{q}_{1,0,0,0}^{(\theta_a,\theta_b)} \big),\quad  \theta_a,\theta_b\in \bbS_{\pi}. \lb{3.17}
\end{align}
We note that \eqref{3.17aa} follows from the 1st representation theorem \cite[Theorem VI.2.1]{Ka80} in conjunction with \cite[Theorem VI.2.5]{Ka80} and \eqref{3.3}--\eqref{3.6}, while \eqref{3.17} is a 
consequence of the explicit representations of the form domains in \eqref{3.3}--\eqref{3.6}.  The full 
string of domain equalities in \eqref{3.15} is an immediate application of \cite[Corollary to Theorem 1]{Ka62}.

The proof of \eqref{3.16} employs three basic facts.  First, by \cite[Proposition 3.1.9 (a)]{Ha06},
\begin{equation}\lb{3.18}
\begin{split}
\dom \Big(\big(L_{1,0,0,0}^{(\theta_a,\theta_b)}\big)^{1/2}\Big)=\ran \Big(\big(L_{1,0,0,0}^{(\theta_a,\theta_b)}+EI_{L^2((a,b);dx)}\big)^{-1/2}\Big),&\\
E>E(\theta_a,\theta_b),\; \theta_a,\theta_b\in \bbS_{\pi},&
\end{split}
\end{equation}
for an appropriate constant $E(\theta_a,\theta_b)>0$.  Second, the resolvent operator 
\begin{equation}\lb{3.19}
\big(L_{1,0,0,0}^{(\theta_a,\theta_b)}-zI_{L^2((a,b);dx)}\big)^{-1},\quad  z\in \rho \big(L_{1,0,0,0}^{(\theta_a,\theta_b)}\big),
\end{equation}
is an integral operator with a (semi-separable) integral kernel (cf., e.g., \cite{GM03}) which we denote by 
\begin{equation}\lb{3.20}
G_{1,0,0,0}^{(\theta_a,\theta_b)}\big(z,x,x'\big),\quad x,x'\in (a,b),\; z\in \rho \big(L_{1,0,0,0}^{(\theta_a,\theta_b)}\big).
\end{equation}
Third, if $S$ is positive-type operator and $H(t,x,x^{\p})$ is an integral kernel such that
\begin{equation}\lb{3.21}
\Big[\big(S+t I_{L^2(J;dx)}\big)^{-1}u\Big](x)=\int_{J} dx' \, H(t,x,x^{\p})u(x^{\p}), \quad x\in J, \; t>0,  
\end{equation}
$J \subset \bbR$ an appropriate interval, then the operator $S^{-q}$, 
$0 < q < 1$, has the integral kernel $R^q (\cdot,\cdot)$ 
(cf., e.g., \cite[Sect.\ 16]{KZPS76}) 
\begin{equation}\lb{3.22}
R^q (x,x^{\p})=\frac{\sin(\pi q)}{\pi} \int_{0}^{\infty} dt \, 
t^{-q} H(t,x,x^{\p}), \quad x, x' \in J.  
\end{equation}

At this point, one notes that the m-sectorial operator associated with $\mathfrak{q}_{1,0,0,0}^{(\theta_a,\theta_b)}$ is given by
\begin{align}
\begin{split} 
&L_{1,0,0,0}^{(\theta_a,\theta_b)}f=-f'',\quad f\in \dom(L_{1,0,0,0}^{(\theta_a,\theta_b)})=\{g\in AC([a,b])\, |\, \gamma_{\theta_a,\theta_b}(g)=0,     \\
&\hspace*{5.45cm} g'\in AC([a,b]),\, g''\in L^2((a,b);dx)\},     \lb{r3.43}
\end{split} 
\end{align}
where $\gamma_{\theta_a,\theta_b}$ denotes the boundary trace map associated with the boundary $\{a,b\}$ of $(a,b)$ and the parameters $\theta_a,\theta_b\in \bbS_{\pi}$,
\begin{align}
\gamma_{\theta_a,\theta_b}:\left\{\begin{array}{c}
C^1([a,b])\rightarrow \bbC^2,\\
g\mapsto
\begin{pmatrix}
\cos(\theta_a)g(a)+\sin(\theta_a)g'(a) \\
\cos(\theta_b)g(b)-\sin(\theta_b)g'(b) 
\end{pmatrix}
\end{array}\right., \quad \theta_a,\theta_b\in \bbS_{\pi}.\lb{r3.44}
\end{align}
For notational convenience, we introduce the differential expression $\tau$ as follows
\begin{align}
\tau f = -f'',\quad f\in \mathfrak{D}_{\tau}=\{f\in AC([a,b])\,| \, f'\in AC([a,b])\}.\lb{r3.46}
\end{align}
Comparing with \eqref{3.3}--\eqref{3.6}, one needs to distinguish the four cases: $(i)$ $\theta_a\in \bbS\backslash \{0\}$, $\theta_b=0$; $(ii)$ $\theta_b\in \bbS\backslash \{0\}$, $\theta_a=0$; $(iii)$ $\theta_a,\theta_b\in \bbS\backslash\{0\}$; $(iv)$ $\theta_a=\theta_b=0$.  In case $(iv)$, the underlying operator is self-adjoint, and equality of square root domains holds trivially.  Here we only consider the details of case $(i)$ as the other cases are handled similarly.  To this end, let $\theta_a\in \bbS_{\pi}\backslash\{0\}$ be fixed and set $\theta_b=0$.

For each $z\in \rho(L_{1,0,0,0}^{(0,0)})$, let $u_j(z,\, \cdot\,)$, $j=1,2$, denote solutions of $(\tau - z)u=0$ satisfying
\begin{align}
\begin{split}
u_1(z,a)=0,\quad u_1(z,b)=1,&\\
u_2(z,a)=1,\quad u_2(z,b)=0,&
\end{split}
\quad z\in \rho(L_{1,0,0,0}^{(0,0)}).\lb{r3.47}
\end{align}
Then one infers that 
\begin{align}
d_{\theta_a,0}(z):=\cot(\theta_a)+u_2^{[1]}(z,a)\neq 0,\quad z\in \rho(L_{1,0,0,0}^{(0,0)})\cap \rho(L_{1,0,0,0}^{(\theta_a,0)}).\lb{r3.48}
\end{align}
Indeed, if $z\in \rho(L_{1,0,0,0}^{(0,0)})\cap \rho(L_{1,0,0,0}^{(\theta_a,0)})$ and $d_{\theta_a,0}(z)=0$, then
\begin{align}
0&= \sin(\theta_a)d_{\theta_a,0}(z)\no\\
&= \cos(\theta_a)+\sin(\theta_a)u_2^{[1]}(z,a)\no\\
&=\cos(\theta_a)u_2(z,a)+\sin(\theta_a)u_2^{[1]}(z,a).\lb{r3.49}
\end{align}
In addition, 
\begin{align}
\cos(0)u_2(z,b)-\sin(0)u_2^{[1]}(z,b)=0,\lb{r3.50}
\end{align}
and it follows that $z$ is an eigenvalue of $L_{1,0,0,0}^{(\theta_a,0)}$ with $u_2(z,\, \cdot\,)$ as a corresponding eigenfunction.  This, however, is an obvious contradiction.

The utility of $d_{\theta_a,0}(z)$ and $u_2(z,\, \cdot\,)$ is that they allow to express the resolvent of $L_{1,0,0,0}^{(\theta_a,0)}$ in terms of the resolvent of $L_{1,0,0,0}^{(0,0)}$ using a Krein-type resolvent formula,
\begin{align}
& \big(L_{1,0,0,0}^{(\theta_a,0)} -zI_{L^2((a,b);dx)} \big)^{-1} = \big(L_{1,0,0,0}^{(0,0)}-zI_{L^2((a,b);dx)} \big)^{-1}      \lb{r3.51} \\
&\quad -d_{\theta_a,0}(z)^{-1}(u_2(\ol{z},\,\cdot\,),\, \cdot\,)_{L^2((a,b);dx)}u_2(z,\, \cdot\,),  
\quad z\in \rho(L_{1,0,0,0}^{(0,0)})\cap \rho(L_{1,0,0,0}^{(\theta_a,0)}).    \no 
\end{align}
Krein-type formulas of this type were derived in \cite[Theorem~3.1(ii)]{CGNZ12} for self-adjoint Sturm--Liouville operators.  However, self-adjointness is inessential; the proofs of the Krein-type formula presented in \cite{CGNZ12} extend to the present (generally) non-self-adjoint situation without modification.  The function $u_2(z,\,\cdot\,)$ may be computed explicitly
\begin{align}
u_2(z,x)=\frac{\sin(z^{1/2}(b-x))}{\sin(z^{1/2}(b-a))},\quad z\in \rho(L_{1,0,0,0}^{(0,0)})\cap \rho(L_{1,0,0,0}^{(\theta_a,0)}).\lb{3.54a}
\end{align}

Moreover, \eqref{r3.51} immediately yields an identity for the integral kernel of the resolvent (i.e., the Green's function), $G_{1,0,0,0}^{(\theta_a,0)}(z,\dott,\dott)$, of $L_{1,0,0,0}^{(\theta_a,0)}$,
\begin{align}
\begin{split}
G_{1,0,0,0}^{(\theta_a,0)}(z,x,x')=G_{1,0,0,0}^{(0,0)}(z,x,x')-d_{\theta_a,0}(z)^{-1}u_2(\ol{z},x)u_2(z,x'),&\\
x,x'\in [a,b],\, z\in \rho(L_{1,0,0,0}^{(0,0)})\cap \rho(L_{1,0,0,0}^{(\theta_a,0)}).&
\end{split}\lb{r3.52}
\end{align}
As both $L_{1,0,0,0}^{(\theta_a,0)}$ and $L_{1,0,0,0}^{(0,0)}$ are m-sectorial, there exists an $E(\theta_a)>0$ such that
\begin{align}
-E\in \rho(L_{1,0,0,0}^{(0,0)})\cap \rho(L_{1,0,0,0}^{(\theta_a,0)}),\quad E>E(\theta_a).\lb{r3.53}
\end{align}
Since
\begin{align}
\begin{split}
\Big[\big(L_{1,0,0,0}^{(\theta_a,0)}-zI_{L^2((a,b);dx)} \big)^{-1}f\Big](x) 
= \int_a^b dx'\, G_{1,0,0,0}^{(\theta_a,0)}(-E,x,x')f(x')&\\
\text{a.e.}\, x\in (a,b),\, f\in L^2((a.b);dx),\, E>E(\theta_a),&
\end{split}\lb{r3.54}
\end{align}
one may apply \eqref{3.22} to compute an integral kernel for the square root operator,
\begin{align}
\big(L_{1,0,0,0}^{(\theta_a,0)}-zI_{L^2((a,b);dx)} \big)^{-1/2},\quad E>E(\theta_a).\lb{r3.55}
\end{align}
Indeed, according to \eqref{3.22} and \eqref{r3.54}, the desired integral kernel is
\begin{align}
&R_{1,0,0,0}^{(\theta_a,0)}(-E,x,x')  
=\frac{1}{\pi}\int_0^{\infty} dt\, t^{-1/2}\Big[ G_{1,0,0,0}^{(0,0)}(-(t+E),x,x') -T(t+E,x,x')\Big] \no\\
& \quad =R_{1,0,0,0}^{(0,0)}(-E,x,x')-\frac{1}{\pi}\int_0^{\infty}dt\, t^{-1/2}T(t+E,x,x'),\quad x,x'\in [a,b],\, E>E(\theta_a),\lb{r3.56}   
\end{align}
where
\begin{align}
T(t,x,x') = \frac{\sinh(t^{1/2}(b-x))\sinh(t^{1/2}(b-x'))}{d_{\theta_a,0}(-t)[\sinh(t^{1/2}(b-a))]^{{}^2}},\quad x,x'\in (a,b),\,  t>E(\theta_a).\lb{4.61.0}
\end{align}
We note that $T(t,x,x')$ satisfies the following estimate
\begin{align}
|T(t,x,x')|&\leq Ct^{-1/2}\big\{e^{-t^{1/2}(x+x'-2a)}+e^{-t^{1/2}(2b+x-x'-2a)}+e^{-t^{1/2}(2b+x'-x-2a)}\no\\
&\quad +e^{-t^{1/2}(4b-x-x'-2a)} \big\}, \quad t>t_0,\, x,x'\in (a,b),\lb{4.62.0}
\end{align}
for some constants $C>0$ and $t>0$, implying that the second integral in \eqref{r3.56} is bounded by
\begin{align}
&\bigg|\int_0^{\infty}dt\, t^{-1/2}T(t+E,x,x')\bigg|\no\\
&\quad \leq 2C\big\{K_0(E^{1/2}(x+x'-2a)) + K_0(E^{1/2}(2b+x-x'-2a)) \no\\
&\qquad + K_0(E^{1/2}(2b+x'-x-2a)) + K_0(E^{1/2}(4b-x-x'-2a)) \big\},\lb{3.62aa}\\
&\hspace*{5.05cm}x,x'\in (a,b),\, E>\max\{E(\theta_a),t_0\}.\no
\end{align}
In \eqref{3.62aa}, $K_0(\,\cdot\,)$ denotes the zeroth order modified Bessel function, and we have used its integral representation (cf., e.g., \cite[Sect. 9.6]{AS72} and \cite[3.387.6]{GR80}),
\begin{equation}\lb{3.63aa}
\int_{\alpha}^{\infty} (s^2-\alpha^2)^{-1/2}e^{-sy}ds=K_0(\alpha y), \quad \alpha>0,\; y>0.
\end{equation}

Now, in order to prove \eqref{3.16}, fix $E>\max\{E(\theta_a),t_0\}$, and let
\begin{align}
g\in \dom\Big(\big(L_{1,0,0,0}^{(\theta_a,0)} \big)^{1/2} \Big) = \ran\Big(\big(L_{1,0,0,0}^{(\theta_a,0)}-zI_{L^2((a,b);dx)} \big)^{-1/2} \Big),\lb{r3.57}
\end{align}
and suppose that $f\in L^2((a,b);dx)$ such that
\begin{align}
g = \big(L_{1,0,0,0}^{(\theta_a,0)}-EI_{L^2((a,b);dx)} \big)^{-1/2}f.\lb{r3.58}
\end{align}
We will show that $g$ satisfies the requirements to belong to $\dom\big(\mathfrak{q}_{1,0,0,0}^{(\theta_a,0)}\big)$.  

According to \eqref{r3.56} and \eqref{r3.58}, $g$ has the represenation
\begin{align}
&g(x)= \int_a^bdx'\, R_{p,0,0,0}^{(\theta_a,0)}(-E,x,x')\no\\
&= \Big[\big(L_{p,0,0,0}^{(0,0)}-EI_{L^2((a,b);dx)} \big)^{-1/2}f \Big](x)\lb{r3.59}\\
&\quad -\frac{1}{\pi}\int_a^bdx'\int_0^{\infty}dt\, t^{-1/2}T(-(t+E),x,x')f(x')\ \text{a.e.}\, x\in (a,b).\no
\end{align}

The operator $L_{1,0,0,0}^{(0,0)}$ is self-adjoint, so 
\begin{align}
\big(L_{p,0,0,0}^{(0,0)}-EI_{L^2((a,b);dx)} \big)^{-1/2}f \in \dom\big(L_{1,0,0,0}^{(0,0)} \big) = W^{1,2}_0((a,b))\subset\dom\big(\mathfrak{q}_{1,0,0,0}^{(\theta_a,0)}\big).\lb{4.68.0}
\end{align} 

Consequently, the first term on the right-hand side in \eqref{r3.59} (i.e., the term preceding the double integral) belongs to the form domain of $L_{1,0,0,0}^{(0,0)}$, hence it belongs to the form domain of $L_{1,0,0,0}^{(\theta_a,0)}$.  Thus, it suffices to show the second term in \eqref{r3.59} belongs to the form domain of $L_{1,0,0,0}^{(\theta_a,0)}$.  An application of dominated convergence shows that this term is absolutely continuous and its derivative belongs to $L^2((a,b);dx)$.  Finally, one computes
\begin{align}
\begin{split} 
g(b)&= \Big[\big(L_{p,0,0,0}^{(0,0)}-EI_{L^2((a,b);dx)} \big)^{-1/2}f \Big](b)\lb{3.68a}\\
&\quad -\frac{1}{\pi}\int_a^bdx'\int_0^{\infty}dt\, t^{-1/2}T(-(t+E),b,x')f(x')=0,   
\end{split} 
\end{align}
The first term on the right-hand side of \eqref{3.68a} vanishes since functions belonging to the form domain of $L_{1,0,0,0}^{(0,0)}$ must vanish at $x=b$, and the second term vanishes via \eqref{4.61.0}.
\end{proof}

Our next result states that the square root domain of $L_{1,q,r,s}^{(\theta_a,\theta_b)}$ is actually independent of $q$, $r$, $s$, and to a certain extent, the boundary condition parameters $\theta_a$ and $\theta_b$, depending upon whether one, or both, of $\theta_a$ and $\theta_b$ is zero.

We note the form domain in the far right-hand side of \eqref{3.11} only depends on whether $\theta_a$ (or $\theta_b$) is zero or not (cf. eqs.~\eqref{3.3}--\eqref{3.6}).

\begin{theorem} \lb{t3.6} 
Assume Hypothesis \ref{h3.1} with $p=1$ a.e.~in $(a,b)$.  Then
\begin{align}
\dom \Big(\big(L_{1,q,r,s}^{(\theta_a,\theta_b)}\big)^{1/2}\Big) = \dom\big(\mathfrak{q}_{1,q,r,s}^{(\theta_a,\theta_b)} \big) = \dom\big(\mathfrak{q}_{1,0,0,0}^{(\theta_a,\theta_b)} \big), \quad \theta_a,\theta_b\in \bbS_{\pi}.\lb{4.69a}
\end{align}
\end{theorem}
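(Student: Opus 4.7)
The plan is to mirror the two-step Kato-resolvent-identity strategy employed in the proof of Theorem \ref{t3.5}, with the unperturbed operator $L_{p,0,0,0}^{(\theta_a,\theta_b)}$ replaced throughout by $L_{1,0,0,0}^{(\theta_a,\theta_b)}$ and the Dirichlet/Neumann-only input of Theorem \ref{t3.3a} replaced by the newly established Lemma \ref{l3.3}, which provides the square-root-domain identification for $L_{1,0,0,0}^{(\theta_a,\theta_b)}$ on the entire parameter strip $\theta_a,\theta_b\in \bbS_{\pi}$. The second equality in \eqref{4.69a} is immediate from the definition \eqref{3.11}, since by construction $\dom(\mathfrak{q}_{1,q,r,s}^{(\theta_a,\theta_b)}) = \dom(\mathfrak{q}_{1,0,0,0}^{(\theta_a,\theta_b)})$.

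For the first equality, the first stage would establish
\begin{equation*}
\dom\big((L_{1,q,r,0}^{(\theta_a,\theta_b)})^{1/2}\big) = \dom\big((L_{1,0,0,0}^{(\theta_a,\theta_b)})^{1/2}\big)
\end{equation*}
by writing the Kato resolvent identity analogous to \eqref{3.59} between $L_{1,q,r,0}^{(\theta_a,\theta_b)}$ and $L_{1,0,0,0}^{(\theta_a,\theta_b)}$, using the factorization $A_1,B_1$ from \eqref{3.60} over the auxiliary space $L^2((a,b);dx)^2$. Uniform $E$-boundedness of $A_1(L_{1,0,0,0}^{(\theta_a,\theta_b)}+EI)^{-1/2}$ would follow from Lemma \ref{l3.3}, Lemma \ref{l2.4}, and \eqref{2.31}, while the $E^{-1/4}$-decay of $B_1((L_{1,0,0,0}^{(\theta_a,\theta_b)})^*+EI)^{-1/2}$ would follow from a Trudinger-type resolvent bound analogous to \eqref{3.63} combined with Lemma \ref{l2.4} and Lemma \ref{l3.3}. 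Corollary \ref{cd.5a} would then produce the desired domain equality. In the second stage, the $s$-term is reinstated via a Kato identity analogous to \eqref{3.72} using the factorization $(A_2,B_2)$ from \eqref{3.73} over $L^2((a,b);dx)$; an analogous decay estimate on $\overline{A_2(L_{1,q,r,0}^{(\theta_a,\theta_b)}+EI)^{-1}B_2^*}$ justifies the Neumann series, and a second application of Corollary \ref{cd.5a} yields
\begin{equation*}
\dom\big((L_{1,q,r,s}^{(\theta_a,\theta_b)})^{1/2}\big) = \dom\big((L_{1,q,r,0}^{(\theta_a,\theta_b)})^{1/2}\big).
\end{equation*}

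The main analytical obstacle lies in producing the Trudinger-type resolvent bound \eqref{3.63} uniformly for all $\theta_a,\theta_b \in \bbS_{\pi}$: Proposition \ref{p3.2} explicitly states the relative form bound \eqref{3.10} only for $\theta_a,\theta_b\in\{0,\pi/2\}$, so the argument used to mimic \eqref{3.63} in the proof of Theorem \ref{t3.5} does not apply verbatim. The fix is to follow the reduction already used inside the proof of Proposition \ref{p3.2}: view $\mathfrak{q}_{1,0,0,0}^{(\theta_a,\theta_b)}$ as a perturbation of $\mathfrak{q}_{1,0,0,0}^{(\pi/2,\pi/2)}$ by the boundary-trace forms $\mathfrak{q}_{\theta_a}, \mathfrak{q}_{\theta_b}$ from \eqref{3.12}, which are themselves infinitesimally $\mathfrak{q}_1^{(\pi/2,\pi/2)}$-bounded via Theorem \ref{tb.1}. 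Combining this with the already-handled Neumann-case Trudinger estimate then provides the required decay estimate uniformly on the full strip $\bbS_{\pi}$, unlocking both Kato identities and completing the proof.
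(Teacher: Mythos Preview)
Your proposal is correct and follows precisely the approach the paper takes: the paper's proof consists of the single sentence that one replaces $p$ by $1$ throughout the proof of Theorem~\ref{t3.5}, lifts the restriction $\theta_a,\theta_b\in\{0,\pi/2\}$, and substitutes references to Theorem~\ref{t3.3a} by references to Lemma~\ref{l3.3}. Your careful treatment of the Trudinger-type bound \eqref{3.63} for general $\theta_a,\theta_b\in\bbS_{\pi}$---via reduction to the Neumann form $\mathfrak{q}_1^{(\pi/2,\pi/2)}$ using the boundary-trace perturbation argument from Proposition~\ref{p3.2}---fills in a detail the paper leaves implicit.
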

\begin{proof}
The result follows from Lemma \ref{l3.3} in a manner analogous to the way Theorem \ref{t3.5} follows from Theorem \ref{t3.3a}. To prove this result only requires to suitably modify the proof of Theorem \ref{t3.5}: one replaces $p$ everywhere by $1$, lifts the restriction $\theta_a,\theta_b\in \{0,\pi/2\}$, and replaces references to Theorem \ref{t3.3a} by references to Lemma \ref{l3.3}, throughout the proof of Theorem \ref{t3.5}.
\end{proof}

It is possible to extend our approach based on Krein's resolvent formula to the case where 
$ 0 < p^{-1} \in L^1((a,b); dx)$, $p, p' \in AC([a,b])$, using a Green--Liouville-type transformation 
as described, for instance, in \cite[p.\ 186]{Te09}. Since this necessitates a certain degree of smoothness 
for the coefficient $p$ we decided not to pursue this further at this point.

\appendix

\section{Stability of Square Root Domains}  \lb{sA}

The goal of this appendix is to recall the main abstract results on stability of square root domains for operators of the form $T_0+B^*A$ presented in \cite{GHN13}.  We begin with the following basic set of assumptions 
(with $\cH$ and $\cK$ complex, separable Hilbert spaces).

\begin{hypothesis}\lb{hd.1}
$(i)$  Suppose that $T_0$ is a densely defined, closed, linear operator in $\cH$ with nonempty resolvent set,
\begin{equation}\lb{d.1}
\rho(T_0)\neq \emptyset,
\end{equation}
$A:\dom(A)\rightarrow \cK$, $\dom(A)\subseteq \cH$, is a densely defined, closed, linear operator from $\cH$ to $\cK$, and $B:\dom(B)\rightarrow \cK$, $\dom(B)\subseteq \cH$, is a densely defined, closed, linear operator from $\cH$ to $\cK$ such that
\begin{equation}\lb{d.2}
\dom(A)\supseteq \dom(T_0), \quad \dom(B)\supseteq \dom(T_0^{\ast}).
\end{equation}
$(ii)$ For some $($and hence for all\,$)$ $z\in \rho(T_0)$, the operator $-A(T_0-zI_{\cH})^{-1}B^{\ast}$, defined on $\dom(B^{\ast})$, has a bounded extension in $\cK$, denoted by $K(z)$,
\begin{equation}\lb{d.3}
K(z)=-\overline{A(T_0-zI_{\cH})^{-1}B^*} \in \cB(\cK).
\end{equation}
$(iii)$  $1\in \rho(K(z_0))$ for some $z_0\in \rho(T_0)$. 
\end{hypothesis}

Suppose Hypothesis \ref{hd.1} holds and define
\begin{equation}\lb{d.4}
\begin{split}
R(z)=(T_0-zI_{\cH})^{-1}-\overline{(T_0-zI_{\cH})^{-1}B^*}[I_{\cK}-K(z)]^{-1}A(T_0-zI_{\cH})^{-1},&\\
z\in \{\zeta\in \rho(T_0)\,|\,1\in \rho(K(\zeta))\}.&
\end{split}
\end{equation}
Under the assumptions of Hypothesis \ref{hd.1}, $R(z)$ given by \eqref{d.4} defines a densely defined, closed, linear operator $T$ in $\cH$ (cf. \cite{GLMZ05}, \cite{Ka66}) by
\begin{equation}\lb{d.5}
R(z)=(T-zI_{\cH})^{-1}, \quad z\in \{\zeta\in \rho(T_0) \, | \, 1\in \rho(K(\zeta))\}.
\end{equation}
Combining \eqref{d.4} and \eqref{d.5} yields Kato's resolvent equation (in the slightly more general form 
of \cite{GLMZ05}, in which $T_0$ is no longer assumed to be self-adjoint), 
\begin{align}\lb{d.6}
& (T-zI_{\cH})^{-1}=(T_0-zI_{\cH})^{-1}-\overline{(T_0-zI_{\cH})^{-1}B^*}
[I_{\cK}-K(z)]^{-1}A(T_0-zI_{\cH})^{-1},    \no \\
& \hspace*{6.6cm} z\in \{\zeta\in \rho(T_0)\,|\, 1\in \rho(K(\zeta))\}. 
\end{align}
The operator $T$ defined by \eqref{d.5} is an extension of $(T_0+B^*A)|_{\dom(T_0)\cap \dom(B^{\ast}A)}$,
\begin{equation}\lb{d.7}
T\supseteq (T_0+B^{\ast}A)|_{\dom(T_0)\cap \dom(B^{\ast}A)}. 
\end{equation}

We add that the operator sum $(T_0+B^*A)|_{\dom(T_0)\cap \dom(B^*A)}$ can be problematic 
since it is possible that $\dom(T_0)\cap \dom(B^*A)=\{0\}$ (for a pertinent example, see, e.g., 
\cite[Sect.\ I.6]{Si71}; see also \cite{SV85}).  In light of \eqref{d.7}, $T$ defined by \eqref{d.5} 
should be viewed as a generalized sum of $T_0$ and $B^*A$, that is, the perturbation $W$ of 
$T_0$ has been factored according to $W = B^* A$. Under the additional assumption that 
$\dom\big(T_0^{1/2}\big) = \dom\big((T_0^*)^{1/2}\big)$, and the additional hypotheses that \eqref{d.8} below holds, we will note in Theorem \ref{te.3} that $T$ also extends the form sum of $T_0$ and $B^* A$. 

We recall that a linear operator $D$ in $\cH$ is called {\it accretive} if the numerical range of $D$ 
(i.e., the set $\{(f,Df)_{\cH}\in\bbC \,|\, f \in \dom(D), \, \|f\|_{\cH}=1\}$) is a subset of the closed 
right complex half-plane. 
$D$ is called {\it m-accretive} if $D$ is a closed and maximal accretive operator (i.e., $D$ has no proper accretive extension). One recalls that an equivalent definition of an m-accretive operator $D$ in $\cH$ reads
\begin{equation}
(D + \zeta I_{\cH})^{-1} \in \cB(\cH), \quad
\big\|(D + \zeta I_{\cH})^{-1} \big\|_{\cB(\cH)} \leq [\Re(\zeta)]^{-1}, \quad \Re(\zeta) > 0.   \lb{d.7aaa}
\end{equation}
One also recalls that any m-accretive operator is necessarily densely defined.  Moreover,  $D$ is called an {\it m-sectorial} operator with a {\it vertex} $\gamma \in \bbR$ and a corresponding {\it semi-angle} $\theta\in [0,\pi/2)$, in short, {\it m-sectorial}, if $D$ is a maximal 
accretive, closed (and hence densely defined) operator, and the numerical range of $D$ is contained 
in the sector $\cS_{\gamma,\theta}$,
\begin{equation}\lb{d.7aaaa}
\cS_{\gamma,\theta}:=\{\zeta \in \bbC\, |\, |\text{arg}(\zeta-\gamma)|\leq \theta\}, 
\quad \theta \in [0,\pi/2).
\end{equation}

\begin{hypothesis}\lb{hd.3}
$(i)$ Suppose $T_0$ is m-accretive. \\ 
$(ii)$ Suppose that $A:\dom(A)\rightarrow \cK$, $\dom(A)\subseteq \cH$, is a closed, linear operator 
from $\cH$ to $\cK$, and $B:\dom(B)\rightarrow \cK$, $\dom(B)\subseteq \cH$, is a closed, linear 
operator from $\cH$ to $\cK$ such that 
\begin{equation}\lb{d.8}
\dom(A)\supseteq \dom\big(T_0^{1/2}\big), \quad \dom(B)\supseteq \dom\big((T_0^*)^{1/2}\big).
\end{equation}
$(iii)$  Suppose that there exist constants $R>0$ and $E_0>0$ such that
\begin{align}\lb{d.9}
\begin{split} 
\int_R^{\infty}d\lambda\, \lambda^{-1} & \big\|A(T_0+(\lambda+E)I_{\cH})^{-1/2}\big\|_{\cB(\cH,\cK)}   \\
& \times \big\|\overline{(T_0+(\lambda+E)I_{\cH})^{-1/2}B^{\ast}}\big\|_{\cB(\cK,\cH)}<\infty,   
\quad  E \geq E_0,
\end{split} 
\end{align}
with
\begin{align}\lb{d.10}
\begin{split} 
\lim_{E\rightarrow \infty}\int_R^{\infty}d\lambda\, \lambda^{-1} & 
\big\|A(T_0+(\lambda+E)I_{\cH})^{-1/2}\big\|_{\cB(\cH,\cK)}     \\
& \times \big\|\overline{(T_0+(\lambda+E)I_{\cH})^{-1/2}B^{\ast}}\big\|_{\cB(\cK,\cH)}=0,
\end{split} 
\end{align}
and 
\begin{align}
\lim_{E\rightarrow \infty}\big\|A(T_0+EI_{\cH})^{-1/2}\big\|_{\cB(\cH,\cK)}\big\|\overline{(T_0+EI_{\cH})^{-1/2}B^{\ast}}\big\|_{\cB(\cK,\cH)}=0.\lb{2.25ccc}
\end{align}
\end{hypothesis}


The principal stability result of square root domains shown in \cite{GHN13} then reads as follows.

\begin{theorem}\lb{td.4}$($\cite[Theorem~2.4]{GHN13}$)$
Assume Hypotheses \ref{hd.1} and \ref{hd.3}.  If $T$ defined as in \eqref{d.5} is m-accretive, then
\begin{equation}\lb{d.11}
\dom\big(T^{1/2}\big) = \dom\big(T_0^{1/2}\big).
\end{equation}
\end{theorem}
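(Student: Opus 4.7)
The plan is to combine Kato's resolvent identity \eqref{d.6} with the standard integral representation
\begin{equation*}
(S + EI_{\cH})^{-1/2} = \frac{1}{\pi}\int_0^{\infty}s^{-1/2}(S + (E+s)I_{\cH})^{-1}\, ds,
\end{equation*}
valid (as a strong integral) for any m-accretive operator $S$ and all $E > 0$. I apply this to both $S = T_0$ (m-accretive by Hypothesis \ref{hd.3}$(i)$) and $S = T$ (m-accretive by assumption). Hypothesis \ref{hd.3}$(iii)$, specifically \eqref{2.25ccc}, together with Hypothesis \ref{hd.1}$(ii)$, forces $\|K(-E-s)\|_{\cB(\cK)} \to 0$ as $E+s \to \infty$, so I fix $E_1 \geq E_0$ so large that $\|K(-E-s)\|_{\cB(\cK)} \leq 1/2$ for all $E \geq E_1$ and $s \geq 0$; then $[I_{\cK} - K(-E-s)]^{-1} \in \cB(\cK)$ with norm at most $2$, uniformly in such $s$.

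Subtracting the two integral representations and substituting \eqref{d.6} under the integral sign yields
\begin{align*}
& (T + EI_{\cH})^{-1/2} - (T_0 + EI_{\cH})^{-1/2}    \\
& \quad = -\frac{1}{\pi}\int_0^\infty s^{-1/2}\, \overline{(T_0 + (E+s)I_{\cH})^{-1}B^*}[I_{\cK} - K(-E-s)]^{-1}A(T_0 + (E+s)I_{\cH})^{-1}\, ds.
\end{align*}
Next I apply $(T_0 + EI_{\cH})^{1/2}$ from the left and bring it under the integral sign, which is justified by closedness of $T_0^{1/2}$ together with the absolute norm convergence established below. Factoring $(T_0 + (E+s)I_{\cH})^{-1}$ as a product of two half-power resolvents on each end of the integrand, and invoking the sectorial functional calculus bound $\|(T_0 + EI_{\cH})^{1/2}(T_0 + (E+s)I_{\cH})^{-1/2}\|_{\cB(\cH)} \leq C$ uniformly in $E \geq E_1$ and $s \geq 0$, together with $\|(T_0 + (E+s)I_{\cH})^{-1/2}\|_{\cB(\cH)} = O((E+s)^{-1/2})$, the operator norm of the integrand is dominated by a constant times
\begin{equation*}
s^{-1/2}(E+s)^{-1/2}\bigl\|A(T_0 + (E+s)I_{\cH})^{-1/2}\bigr\|_{\cB(\cH,\cK)} \bigl\|\overline{(T_0 + (E+s)I_{\cH})^{-1/2}B^*}\bigr\|_{\cB(\cK,\cH)}.
\end{equation*}
Splitting the $s$-integral at the threshold $R$ from Hypothesis \ref{hd.3}$(iii)$, the tail integral is controlled by $\int_R^\infty \lambda^{-1}\|A(T_0 + (\lambda+E)I_{\cH})^{-1/2}\|\|\overline{(T_0 + (\lambda+E)I_{\cH})^{-1/2}B^*}\|\, d\lambda$, which is finite by \eqref{d.9} and vanishes as $E \to \infty$ by \eqref{d.10}; the head integral over $[0,R]$ vanishes by \eqref{2.25ccc} combined with the uniform boundedness of the remaining factors on the compact $s$-interval.

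Consequently, $(T_0 + EI_{\cH})^{1/2}\bigl[(T + EI_{\cH})^{-1/2} - (T_0 + EI_{\cH})^{-1/2}\bigr]$ extends to a bounded operator $R_E \in \cB(\cH)$ with $\|R_E\|_{\cB(\cH)} \to 0$ as $E \to \infty$. Setting $U_E := I_{\cH} + R_E$, which represents $(T_0 + EI_{\cH})^{1/2}(T + EI_{\cH})^{-1/2}$ in the extended sense, I conclude that $U_E$ is boundedly invertible on $\cH$ for all $E$ sufficiently large by Neumann series. Boundedness of $U_E$ directly gives $\dom(T^{1/2}) = \dom((T+EI_{\cH})^{1/2}) \subseteq \dom((T_0+EI_{\cH})^{1/2}) = \dom(T_0^{1/2})$. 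For the reverse inclusion, given $g \in \dom(T_0^{1/2})$, the vector $h := (T + EI_{\cH})^{-1/2}U_E^{-1}(T_0 + EI_{\cH})^{1/2}g$ lies in $\dom(T^{1/2})$ and satisfies $(T_0 + EI_{\cH})^{1/2}h = U_E(T+EI_{\cH})^{1/2}h = (T_0 + EI_{\cH})^{1/2}g$, so $h = g$ by injectivity of $(T_0 + EI_{\cH})^{1/2}$, placing $g \in \dom(T^{1/2})$.

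The main difficulty is the careful estimation of the integrand, which crucially exploits the factorization $W = B^{\ast}A$ of the perturbation through Hypothesis \ref{hd.3}$(iii)$ in order to extract $E$-decay from the norms $\|A(T_0 + (E+s)I_{\cH})^{-1/2}\|$ and $\|\overline{(T_0 + (E+s)I_{\cH})^{-1/2}B^{\ast}}\|$. The uniform sectorial functional calculus estimate for $(T_0 + EI_{\cH})^{1/2}(T_0 + (E+s)I_{\cH})^{-1/2}$ is an auxiliary ingredient that rests on the fact that m-accretivity of $T_0$ places the spectrum of $T_0 + EI_{\cH}$ in a fixed closed sector in the right half-plane. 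Justifying the interchange of the unbounded $(T_0 + EI_{\cH})^{1/2}$ with the Bochner integral is a secondary technical point, reduced to absolute convergence in operator norm together with closedness of $T_0^{1/2}$ applied to partial integrals.
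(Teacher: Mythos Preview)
The paper does not supply its own proof of this theorem; it is quoted verbatim from \cite[Theorem~2.4]{GHN13} and used as a black box in Appendix~\ref{sA}. So there is no in-paper argument to compare against.

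Your proof is correct and is in fact the standard route to results of this type (and almost certainly the one taken in \cite{GHN13}): combine the integral representation $(S+EI_{\cH})^{-1/2}=\pi^{-1}\int_0^\infty s^{-1/2}(S+(E+s)I_{\cH})^{-1}\,ds$ for m-accretive $S$ with Kato's resolvent identity \eqref{d.6}, premultiply by $(T_0+EI_{\cH})^{1/2}$, and estimate. Your use of \eqref{2.25ccc} to guarantee $\|K(-E-s)\|_{\cB(\cK)}\leq 1/2$ uniformly for large $E$, of \eqref{d.9}--\eqref{d.10} to control the tail $s\geq R$, and of \eqref{2.25ccc} again for the compact piece $s\in[0,R]$, matches exactly how Hypothesis~\ref{hd.3}$(iii)$ is designed to be consumed. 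The auxiliary sectorial bounds $\|(T_0+EI_{\cH})^{1/2}(T_0+(E+s)I_{\cH})^{-1/2}\|_{\cB(\cH)}\leq C$ and $\|(T_0+(E+s)I_{\cH})^{-1/2}\|_{\cB(\cH)}\leq C(E+s)^{-1/2}$ are standard consequences of m-accretivity (cf.\ \cite[Ch.~3]{Ha06}). The closing Neumann-series argument showing that $U_E=(T_0+EI_{\cH})^{1/2}(T+EI_{\cH})^{-1/2}$ is a bounded bijection for $E$ large, and hence that the two square root domains coincide, is clean. One small remark: when you invoke closedness of $T_0^{1/2}$ to pass $(T_0+EI_{\cH})^{1/2}$ under the Bochner integral, you are implicitly using that each integrand value maps $\cH$ into $\dom(T_0^{1/2})$; this is clear since $\overline{(T_0+(E+s)I_{\cH})^{-1}B^*}$ lands in $\dom(T_0)\subset\dom(T_0^{1/2})$, but it is worth stating once.
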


Next, we strengthen Hypothesis \ref{hd.3} as follows.

\begin{hypothesis}\lb{hd.5}
In addition to the assumptions in Hypothesis \ref{hd.3} suppose that 
\begin{equation}\lb{d.12}
\dom\big(T_0^{1/2}\big)=\dom\big((T_0^*)^{1/2}\big).
\end{equation}
\end{hypothesis}

Under the strengthened Hypothesis \ref{hd.5}, one obtains stability of square root domains.

\begin{corollary}\lb{cd.5a}$($\cite[Corollary~2.6]{GHN13}$)$
Assume Hypotheses \ref{hd.1} and \ref{hd.5}.  If $T$ defined as in \eqref{d.5} is m-accretive, then
\begin{equation}\lb{d.13}
\dom\big(T^{1/2}\big) = \dom\big(T_0^{1/2}\big) = \dom\big((T_0^*)^{1/2}\big)= \dom\big((T^*)^{1/2}\big).
\end{equation}
\end{corollary}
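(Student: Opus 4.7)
The plan is to apply Theorem \ref{td.4} twice---once to $T$ and once to $T^*$---and to glue the two conclusions with the added assumption \eqref{d.12} of Hypothesis \ref{hd.5}. The first equality in \eqref{d.13}, $\dom\big(T^{1/2}\big) = \dom\big(T_0^{1/2}\big)$, is already delivered by Theorem \ref{td.4}, since Hypothesis \ref{hd.5} strengthens Hypothesis \ref{hd.3} and the m-accretivity of $T$ is hypothesized. The middle equality $\dom\big(T_0^{1/2}\big) = \dom\big((T_0^*)^{1/2}\big)$ is exactly \eqref{d.12}. Thus the sole new content to be established is the third equality $\dom\big((T^*)^{1/2}\big) = \dom\big((T_0^*)^{1/2}\big)$.

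To obtain this equality, I would take adjoints of Kato's resolvent identity \eqref{d.6} to produce the mirror identity
\begin{equation*}
(T^* - \zeta I_{\cH})^{-1} = (T_0^* - \zeta I_{\cH})^{-1} - \overline{(T_0^* - \zeta I_{\cH})^{-1} A^*}\, \big[I_{\cK} - \wti K(\zeta)\big]^{-1} B\,(T_0^* - \zeta I_{\cH})^{-1},
\end{equation*}
valid wherever $1\in \rho\big(\wti K(\zeta)\big)$, with $\wti K(\zeta) := -\overline{B(T_0^* - \zeta I_{\cH})^{-1} A^*}$. This identity exhibits $T^*$ in exactly the form \eqref{d.5} but with the triple $(T_0, A, B)$ replaced by the swapped triple $(T_0^*, B, A)$. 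It then remains to verify that this swapped triple satisfies Hypotheses \ref{hd.1} and \ref{hd.3}. The inclusions \eqref{d.2} and \eqref{d.8} are symmetric in $A$ and $B$ once one invokes \eqref{d.12}; since $\wti K(\zeta) = K(\overline{\zeta})^*$, the spectral condition Hypothesis \ref{hd.1}(iii) transfers immediately; and the decay and integrability conditions \eqref{d.9}, \eqref{d.10}, \eqref{2.25ccc} translate verbatim because, for $\mu = \lambda + E$, adjoining gives
\[
\big\|B(T_0^* + \mu I_{\cH})^{-1/2}\big\|_{\cB(\cH,\cK)} = \big\|\overline{(T_0 + \mu I_{\cH})^{-1/2} B^*}\big\|_{\cB(\cK,\cH)},
\]
and similarly with $A$ in place of $B$. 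Since m-accretivity is preserved under taking the adjoint, $T^*$ is itself m-accretive, so a second application of Theorem \ref{td.4} yields $\dom\big((T^*)^{1/2}\big) = \dom\big((T_0^*)^{1/2}\big)$, closing the chain.

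The step I expect to demand the most care is the rigorous derivation of the displayed mirror resolvent identity. Because $A$ and $B$ are unbounded, taking adjoints requires careful bookkeeping of closures and domains, and one must appeal to the general framework of \cite{GLMZ05} (already invoked in the construction of $T$ itself) to conclude that the given right-hand side actually represents $(T^* - \zeta I_{\cH})^{-1}$, rather than merely the resolvent of some proper extension. Once that identification is in hand, the remainder of the argument is a mechanical transcription of the proof of Theorem \ref{td.4} for the swapped data.
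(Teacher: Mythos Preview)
Your approach is correct and is the natural one: apply Theorem~\ref{td.4} once to $(T_0,A,B)$ to get $\dom(T^{1/2})=\dom(T_0^{1/2})$, invoke \eqref{d.12}, and then apply Theorem~\ref{td.4} a second time to the swapped triple $(T_0^*,B,A)$ to get $\dom((T^*)^{1/2})=\dom((T_0^*)^{1/2})$. The verification that $(T_0^*,B,A)$ again satisfies Hypotheses~\ref{hd.1} and \ref{hd.3} is exactly as you outline, using $\wti K(\zeta)=K(\ol\zeta)^*$ and the adjoint identity $\|B(T_0^*+\mu I_{\cH})^{-1/2}\|=\|\ol{(T_0+\mu I_{\cH})^{-1/2}B^*}\|$ for $\mu>0$.

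One remark on the step you flag as delicate: you do not need to re-derive the mirror resolvent identity from scratch via \cite{GLMZ05}. Since \eqref{d.6} already holds and every factor on its right-hand side is a bounded operator, you may simply take the Hilbert space adjoint of both sides. The left side becomes $(T^*-\ol z\,I_{\cH})^{-1}$, and the right side becomes precisely your displayed mirror formula (with $\zeta=\ol z$). There is no risk of landing on a proper extension, because an operator is uniquely determined by its resolvent. The only bookkeeping required is the routine identification $\big[\,\ol{(T_0-zI_{\cH})^{-1}B^*}\,\big]^*=B(T_0^*-\ol z\,I_{\cH})^{-1}$ and $\big[A(T_0-zI_{\cH})^{-1}\big]^*=\ol{(T_0^*-\ol z\,I_{\cH})^{-1}A^*}$, both of which are immediate since the relevant products are bounded. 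With that simplification the argument is entirely mechanical.
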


\section{Sesquilinear Forms and Associated Operators}  \lb{sB}

In this appendix we briefly recall the precise connection between Kato's 
resolvent equation \eqref{d.4} defining the operator $T$ on one hand, and the form sum of $T_0$ and an extension of $B^*A$ under somewhat stronger hypotheses on $T_0$, $A$, and $B$ than those in 
Hypothesis \ref{hd.1}, on the other hand.

We start with the following basic assumptions (with $\cH$ and $\cK$ complex, separable Hilbert spaces). 

\begin{hypothesis} \lb{he.2}
$(i)$ Suppose $T_0$ is m-sectorial in $\cH$ and 
\begin{equation}\lb{e.7a}
\dom\big(T_0^{1/2}\big)=\dom\big((T_0^*)^{1/2}\big).
\end{equation}
$(ii)$ Assume that $A:\dom(A)\rightarrow \cK$, $\dom(A)\subseteq \cH$, is a closed, linear operator 
from $\cH$ to $\cK$, and $B:\dom(B)\rightarrow \cK$, $\dom(B)\subseteq \cH$, is a closed, linear 
operator from $\cH$ to $\cK$ such that 
\begin{equation}\lb{e.8a}
\dom(A)\supseteq \dom\big(T_0^{1/2}\big), \quad \dom(B)\supseteq \dom\big(T_0^{1/2}\big),
\end{equation}
and that for some $0 \leq a < 1$, $b \geq 0$, 
\begin{align}
\begin{split} 
& \|A f\|_{\cK}^2 \leq a \Re[\gq^{}_{T_0}(f,f)] + b \|f\|_{\cH}^2, \quad f \in \dom\big(T_0^{1/2}\big),    \\
& \|B f\|_{\cK}^2 \leq a \Re[\gq^{}_{T_0}(f,f)] + b \|f\|_{\cH}^2, \quad f \in \dom\big(T_0^{1/2}\big).  
\lb{e.9a}
\end{split} 
\end{align}
$(iii)$ Suppose that 
\begin{equation}
\lim_{E \uparrow \infty} \|K(-E)\|_{\cB(\cK)} = 0,    \lb{e.44} 
\end{equation} 
where
\begin{align}
\begin{split} 
K(z) &= - \overline{A(T_0-zI_{\cH})^{-1}B^*}    \\
&= - \big[A (T_0-zI_{\cH})^{-1/2}\big] 
\Big[\ol{\big[B (T_0^* - \ol{z} I_{\cH})^{-1/2}\big]^*}\Big] \in \cB(\cK),  \quad z \in \rho(T_0).  \lb{e.45a} 
\end{split} 
\end{align}
\end{hypothesis}

In the following we denote by $\gq^{}_{T_0}$ the sectorial form associated with $T_0$ in $\cH$ and note 
that according to Remark \ref{t1.2}, assumption \eqref{e.7a} yields that 
\begin{equation}
\dom\big(T_0^{1/2}\big) = \dom(\gq^{}_{T_0}) = \dom\big((T_0^*)^{1/2}\big).   \lb{e.46a}
\end{equation} 
In addition, introducing the form 
\begin{equation}
\gq^{}_W (f,g) = (Bf, Ag)_{\cK}, \quad f, g \in \dom\big(T_0^{1/2}\big)     \lb{e.47}
\end{equation}
(formally corresponding to the operator $B^* A$), it is clear from assumption \eqref{e.9a} that 
$\gq^{}_W$ is relatively bounded with respect to $\gq^{}_{T_0}$ with bound strictly less than one. 
Thus, we may introduce the sectorial form in $\cH$, 
\begin{align}
\begin{split} 
& \gq^{}_{T_0 +_{\gq} W}(f,g) = \gq^{}_{T_0}(f,g) + \gq^{}_W (f,g), \\
& f,g \in \dom(\gq^{}_{T_0 +_{\gq} W}) = \dom(\gq^{}_{T_0}) = \dom\big(T_0^{1/2}\big),   \lb{e.48}
\end{split} 
\end{align}
and hence denote the m-sectorial operator uniquely associated with $\gq^{}_{T_0 +_{\gq} W}$ 
by $T_0 +_{\gq} W$ in the following.  Here ``$+_{\gq}$'' denotes the (quadratic) form sum of two operators.

The principal result of this appendix, establishing equality between $T$ defined according to 
Kato's method \eqref{d.4}, \eqref{d.5} on one hand, and the form sum $T_0 +_{\gq} W$ on 
the other, a result of interest in its own right, was proved in \cite{GHN13} and reads as follows.

\begin{theorem} \lb{te.3}$($\cite[Theorem~A.3]{GHN13}$)$
Assume Hypothesis \ref{he.2}. Then $T$ defined as in 
\eqref{d.4} and \eqref{d.5} coincides with the form sum of $T_0 +_{\gq} W$ in $\cH$, 
\begin{equation}
T = T_0 +_{\gq} W.     \lb{d.49} 
\end{equation}
\end{theorem}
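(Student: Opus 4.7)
The plan is to show directly that $-E\in \rho(T_0+_{\gq}W)$ for all sufficiently large $E>0$, and that the operator $R(-E)$ defined by \eqref{d.4} equals $(T_0+_{\gq}W+EI_{\cH})^{-1}$. Since both $T$ and $T_0+_{\gq}W$ are thereby identified by a single value of the resolvent on a common point of the resolvent set, the equality \eqref{d.49} follows. To even speak of the form sum, I first invoke the KLMN theorem for sectorial forms: the bounds \eqref{e.9a} with $a<1$, together with Cauchy--Schwarz, show that $|\gq^{}_{W}(f,g)|\le \bigl(a\,\Re\gq^{}_{T_0}(f,f)+b\|f\|^2_{\cH}\bigr)^{1/2}\bigl(a\,\Re\gq^{}_{T_0}(g,g)+b\|g\|^2_{\cH}\bigr)^{1/2}$, so $\gq^{}_{W}$ is $\Re\gq^{}_{T_0}$-bounded with relative bound at most $a<1$; hence $\gq^{}_{T_0+_{\gq}W}$ in \eqref{e.48} is closed, sectorial, densely defined on $\dom(T_0^{1/2})$, and associates to an m-sectorial operator.

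For large $E$, hypothesis \eqref{e.44} gives $\|K(-E)\|_{\cB(\cK)}<1$, so $[I_{\cK}-K(-E)]^{-1}$ exists via Neumann series, $R(-E)$ is a well-defined element of $\cB(\cH)$, and the factorization $\overline{(T_0+EI_{\cH})^{-1}B^*}=(T_0+EI_{\cH})^{-1/2}\,S$, with $S:=\overline{(T_0+EI_{\cH})^{-1/2}B^*}\in\cB(\cK,\cH)$, shows that $R(-E)f$ lies in $\ran\bigl((T_0+EI_{\cH})^{-1/2}\bigr)=\dom(T_0^{1/2})=\dom(\gq^{}_{T_0+_{\gq}W})$ for every $f\in\cH$. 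Setting $u:=R(-E)f$ and $v:=[I_{\cK}-K(-E)]^{-1}A(T_0+EI_{\cH})^{-1}f$, the identity $(I_{\cK}-K(-E))v=A(T_0+EI_{\cH})^{-1}f$ together with the factored form of $K(-E)$ in \eqref{e.45a} yields $Au=A(T_0+EI_{\cH})^{-1}f+K(-E)v=v$, where the application of $A$ to $\overline{(T_0+EI_{\cH})^{-1}B^*}v$ is justified by its lying in $\dom(T_0^{1/2})\subseteq\dom(A)$.

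It remains to verify, for every $g\in\dom(T_0^{1/2})$, the identity
\begin{equation*}
\gq^{}_{T_0+_{\gq}W}(g,u)+E(g,u)_{\cH}=(g,f)_{\cH},
\end{equation*}
which is the defining property of $(T_0+_{\gq}W+EI_{\cH})^{-1}f$. Splitting $u$ via \eqref{d.4}, the contribution of $(T_0+EI_{\cH})^{-1}f\in\dom(T_0)$ gives $(g,f)_{\cH}$ by the standard relation $\gq^{}_{T_0}(g,h)=(g,T_0h)_{\cH}$ for $h\in\dom(T_0)$, $g\in\dom((T_0^*)^{1/2})$, noting \eqref{e.46a}. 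For the second contribution $h:=\overline{(T_0+EI_{\cH})^{-1}B^*}v=(T_0+EI_{\cH})^{-1/2}Sv$, the second representation theorem (applicable because of \eqref{e.7a}) gives
\begin{equation*}
\gq^{}_{T_0+EI_{\cH}}(g,h)=\bigl((T_0^*+EI_{\cH})^{1/2}g,\,Sv\bigr)_{\cH}=\bigl(S^*(T_0^*+EI_{\cH})^{1/2}g,\,v\bigr)_{\cK},
\end{equation*}
and the identification $S^*=B(T_0^*+EI_{\cH})^{-1/2}$ (bounded by \eqref{e.9a} applied to $B$) yields $\gq^{}_{T_0+EI_{\cH}}(g,h)=(Bg,v)_{\cK}$. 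Combining with $Au=v$ shows the $(Bg,v)_{\cK}$ terms cancel against $\gq^{}_{W}(g,u)=(Bg,Au)_{\cK}$, leaving $(g,f)_{\cH}$ as required.

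The main technical obstacle I anticipate is the interplay of closures and adjoints in the last paragraph: proving the factorization $S^*=B(T_0^*+EI_{\cH})^{-1/2}$ (equivalently, that the bounded operator $B(T_0^*+EI_{\cH})^{-1/2}\in\cB(\cH,\cK)$ coincides with the adjoint of the closure $S$) and establishing that $A$ genuinely distributes across the closure $\overline{(T_0+EI_{\cH})^{-1}B^*}$ so that $Au=v$. Both rest on the fact that, under \eqref{e.7a}, the operators $A(T_0+EI_{\cH})^{-1/2}$ and $B(T_0^*+EI_{\cH})^{-1/2}$ are bounded on all of $\cH$ with adjoints that are the closures of the formally adjoint expressions, a point that must be justified carefully but offers no conceptual difficulty beyond standard Kato-type bookkeeping.
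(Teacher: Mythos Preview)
The paper does not give its own proof of this theorem; it merely quotes it from \cite[Theorem~A.3]{GHN13}. Your argument is correct and is the natural one: you verify, for $E$ large enough that $\|K(-E)\|_{\cB(\cK)}<1$ and $-E\in\rho(T_0+_{\gq}W)$, that $u=R(-E)f$ satisfies the defining weak equation $\gq^{}_{T_0+_{\gq}W}(g,u)+E(g,u)_{\cH}=(g,f)_{\cH}$ for all $g\in\dom(T_0^{1/2})$, whence by the first representation theorem $u\in\dom(T_0+_{\gq}W)$ and $(T_0+_{\gq}W+EI_{\cH})u=f$; bijectivity of $T_0+_{\gq}W+EI_{\cH}$ then forces $R(-E)=(T_0+_{\gq}W+EI_{\cH})^{-1}$.

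The ``technical obstacles'' you flag are genuine but routine, and your sketch already contains their resolution. The factorization $\overline{(T_0+EI_{\cH})^{-1}B^*}=(T_0+EI_{\cH})^{-1/2}\,S$ with $S=\overline{(T_0+EI_{\cH})^{-1/2}B^*}\in\cB(\cK,\cH)$ is the key: since $A(T_0+EI_{\cH})^{-1/2}\in\cB(\cH,\cK)$ by the closed graph theorem and \eqref{e.8a}, one gets $A\,\overline{(T_0+EI_{\cH})^{-1}B^*}=\big[A(T_0+EI_{\cH})^{-1/2}\big]S=\overline{A(T_0+EI_{\cH})^{-1}B^*}=-K(-E)$, which yields $Au=v$. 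The identification $S^*=B(T_0^*+EI_{\cH})^{-1/2}$ follows from $\big[(T_0+EI_{\cH})^{-1/2}\big]^*=(T_0^*+EI_{\cH})^{-1/2}$ (a consequence of \eqref{1.12}) and the fact that $B(T_0^*+EI_{\cH})^{-1/2}$ is everywhere defined and bounded; its adjoint is then precisely $S$, so $S^*$ is as claimed. Finally, the use of the second representation theorem \eqref{1.19} for $T_0+EI_{\cH}$ is justified exactly by hypothesis \eqref{e.7a}, via Theorem~\ref{t1.2}. This is almost certainly the argument given in \cite{GHN13} as well.
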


\section{Operator Norm and Form Bounds}  \lb{sC}

This appendix records several technical estimates used throughout the course of the main presentation. 
For notational consistency throughout this manuscript, we follow the notation set forth in \eqref{2.27} 
in this appendix and, therefore, let $L_{1,0,0,0}$ denote the self-adjoint realization of $(-d^2/dx^2)$ 
with domain $W^{1,2}(\bbR)$ in $L^2(\bbR;dx)$.

\begin{theorem}\lb{ta.1}
Assume Hypothesis \ref{h2.1}.  Then there exist constants $C>0$ and $E_0>0$ such that
\begin{equation}\lb{a.1}
\big\| \phi(L_{1,0,0,0}+EI_{L^2(\bbR;dx)})^{-1/2} \big\|_{\cB(L^2(\bbR;dx))}\leq CE^{-1/4}, 
\quad E>E_0,\; \phi\in\big\{|r|,|s|,|q|^{1/2}\big\}.
\end{equation}
\end{theorem}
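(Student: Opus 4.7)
I would prove \eqref{a.1} by reducing the operator-norm inequality to a quadratic-form estimate on $W^{1,2}(\bbR)$ and then selecting the ``small parameter'' in the locally-uniform form bound as an appropriate negative power of $E$. Write $R_E := L_{1,0,0,0} + E I_{L^2(\bbR;dx)}$. Since $L_{1,0,0,0}$ is self-adjoint and nonnegative with the quadratic form $(f, L_{1,0,0,0} f)_{L^2(\bbR;dx)} = \|f'\|_{L^2(\bbR;dx)}^2$ on $\dom\bigl(L_{1,0,0,0}^{1/2}\bigr) = W^{1,2}(\bbR)$, the unitary $R_E^{1/2}$-calculus yields the isometry
\[
\|g\|_{L^2(\bbR;dx)}^2 = \bigl\|R_E^{1/2}f\bigr\|_{L^2(\bbR;dx)}^2 = \|f'\|_{L^2(\bbR;dx)}^2 + E\|f\|_{L^2(\bbR;dx)}^2, \quad f = R_E^{-1/2}g, \; g\in L^2(\bbR;dx).
\]
Consequently, the desired bound \eqref{a.1} is equivalent to the form estimate
\[
\int_{\bbR} \phi(x)^2 |f(x)|^2\, dx\, \leq\, C^2 E^{-1/2}\bigl(\|f'\|_{L^2(\bbR;dx)}^2 + E\|f\|_{L^2(\bbR;dx)}^2\bigr), \quad f\in W^{1,2}(\bbR),
\]
valid for $E>E_0$, where $\phi\in\{|r|,|s|,|q|^{1/2}\}$.

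\textbf{Form bound.} Each of the three admissible choices of $\phi$ lies in $L_{\locunif}^{2}(\bbR;dx)$ by Hypothesis \ref{h2.1} (for $\phi=|q|^{1/2}$ this uses $q\in L_{\locunif}^1$). The required input is then precisely the locally-uniform $L^2$-form bound already derived in \eqref{2.12}--\eqref{2.14} (via \cite[Theorem~2.7.1]{Sc81}, \cite[p.\ 35]{Si71}): setting $N:=\max\{C_r, C_s, C_q\}$ as in \eqref{2.15}, one has
\[
\|\phi f\|_{L^2(\bbR;dx)}^2\, \leq\, \kappa N \|f'\|_{L^2(\bbR;dx)}^2 + 2\kappa^{-1} N \|f\|_{L^2(\bbR;dx)}^2, \quad f\in W^{1,2}(\bbR),\, 0<\kappa<1,\, \phi\in\{|r|,|s|,|q|^{1/2}\}.
\]

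\textbf{Optimization and conclusion.} I would now select $\kappa = E^{-1/2}$, which is admissible whenever $E>1$. Inserting this choice into the form bound yields
\[
\|\phi f\|_{L^2(\bbR;dx)}^2 \leq N E^{-1/2}\|f'\|_{L^2(\bbR;dx)}^2 + 2N E^{1/2}\|f\|_{L^2(\bbR;dx)}^2 \leq 2 N E^{-1/2}\bigl(\|f'\|_{L^2(\bbR;dx)}^2 + E\|f\|_{L^2(\bbR;dx)}^2\bigr).
\]
Substituting $f = R_E^{-1/2}g$ and invoking the isometry from the first paragraph then gives
\[
\bigl\|\phi R_E^{-1/2}g\bigr\|_{L^2(\bbR;dx)}^2 \leq 2N E^{-1/2} \|g\|_{L^2(\bbR;dx)}^2, \quad g\in L^2(\bbR;dx),\, E>1,
\]
which is exactly \eqref{a.1} with, say, $C=\sqrt{2N}$ and $E_0=1$.

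\textbf{Main obstacle.} There is no substantive difficulty beyond bookkeeping, but there is one subtlety worth flagging: to extract the gain $E^{-1/4}$ one must apply the direct $L^2$-estimate for $\phi f$, whose error term has the clean shape $\kappa\|f'\|^2 + \kappa^{-1}\|f\|^2$, rather than the composite sesquilinear-form estimate of Proposition \ref{p2.2} whose error carries the cruder $\kappa^{-3}$ factor visible in \eqref{2.6}; the latter, under the same optimization in $\kappa$, would produce only the weaker decay $E^{-1/8}$ and would therefore be insufficient for the applications of \eqref{a.1} in \eqref{2.40} and \eqref{2.43}.
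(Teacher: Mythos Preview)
Your proof is correct and follows essentially the same approach as the paper: both use the locally-uniform form bound \eqref{2.12}--\eqref{2.14} for $\phi\in\{|r|,|s|,|q|^{1/2}\}$, substitute $f=R_E^{-1/2}g$, and optimize the small parameter as a power of $E$. Your execution is in fact slightly more direct, since you work with the squared bound and the exact self-adjoint identity $\|R_E^{1/2}f\|^2=\|f'\|^2+E\|f\|^2$, whereas the paper first passes to the linear (square-root) form \eqref{2.16}--\eqref{2.18}, bounds $\|(d/dx)R_E^{-1/2}\|$ via Lemma~\ref{l2.4} and $\|R_E^{-1/2}\|\le E^{-1/2}$ via the spectral theorem separately, and then chooses $\varepsilon=E^{-1/4}$; the outcome and the underlying idea are the same.
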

\begin{proof}
Let $ \phi\in\big\{|r|,|s|,|q|^{1/2}\big\}$ be fixed for the remainder of this proof.  By \eqref{2.16}--\eqref{2.18}, there exist constants $c_{\phi}>0$ and $\varepsilon_{\phi}>0$ such that
\begin{equation}\lb{a.2}
\|\phi f\|_{L^2(\bbR;dx)}\leq \e \|f'\|_{L^2(\bbR;dx)}+\e^{-1}c_{\phi}\|f\|,\quad f\in W^{1,2}(\bbR),\; 0<\e<\e_{\phi}.
\end{equation}
Choosing $f=(L_{1,0,0,0}+EI_{L^2(\bbR;dx)})^{-1/2}g$, $g\in L^2(\bbR;dx)$, in \eqref{a.2} yields the estimate
\begin{align}
&\big\| \phi(L_{1,0,0,0}+EI_{L^2(\bbR;x)})^{-1/2}g \big\|_{L^2(\bbR;dx)}\no\\
&\quad \leq \e \bigg\|\frac{d}{dx}(L_{1,0,0,0}+EI_{L^2(\bbR;x)})^{-1/2}\bigg\|_{\cB(L^2(\bbR;dx))}\|g\|_{L^2(\bbR;dx)}
\no \\
& \qquad +c_{\phi}\e^{-1}E^{-1/2}\|g\|_{L^2(\bbR;dx)}\no\\
& \quad \leq \big(\e M + c_{\phi}\e^{-1}E^{-1/2}\big)\|g\|_{L^2(\bbR;dx)},\quad g\in L^2(\bbR;dx),
\; 0<\e<\e_{\phi},\; E>0,\lb{a.3}
\end{align}
where $M>0$ is an $E$-independent positive constant which bounds the operator norm appearing in the second line of \eqref{a.3} (the existence of such a constant is guaranteed by Lemma \ref{l2.4}), and we have used the spectral theorem for self-adjoint operators to bound the operator norm of $(L_{1,0,0,0}+EI_{L^2(\bbR;dx)})^{-1/2}$ by $E^{-1/2}$ for any $E>0$.  Choosing $\e=E^{-1/4}$, $E>\e_{\phi}^{-4}$, 
throughout \eqref{a.3} yields the operator norm inequality
\begin{equation}\lb{a.4}
\big\| \phi(L_{1,0,0,0}+EI_{L^2(\bbR;x)})^{-1/2} \big\|_{\cB(L^2(\bbR;dx))}\leq (M+c_{\phi})E^{-1/4}, 
\quad E>\e_{\phi}^{-4}.
\end{equation}
Consequently, \eqref{a.1} follows by choosing
\begin{equation}\lb{a.5}
C=\max \big\{c_{|r|}, c_{|s|}, c_{|q|^{1/2}}\big\} \, \text{ and } \, 
E_0=\max \big\{\e_{|r|}^{-4},\e_{|s|}^{-4},\e_{|q|^{1/2}}^{-4} \big\}.
\end{equation}
\end{proof}

By employing a similar strategy, one can prove the following result for the half-line case $(a,\infty)$ with either a Dirichlet or Neumann boundary condition at $x=a$.  We recall that $L_{1,0,0,0}^{a,D}$ 
(resp., $L_{1,0,0,0}^{a,N}$) denotes the free Laplacian $(-d^2/dx^2)$ on $(a,\infty)$ with a Dirichlet (resp., Neumann) boundary condition at $x=a$ (see the discussion surrounding \eqref{4.16} for complete details).

\begin{theorem}\lb{ta.2}
Assume Hypothesis \ref{h4.1}.  Then there exist constants $C>0$ and $E_0>0$ such that
\begin{equation}\lb{a.6}
\begin{split}
\big\| \phi\big(L_{1,0,0,0}^{a,Y}+EI_{L^2((a,\infty);dx)}\big)^{-1/2} \big\|_{\cB(L^2((a,\infty);dx))}\leq CE^{-1/4},&\\ 
\quad E>E_0,\; \phi\in\big\{|r|,|s|,|q|^{1/2}\big\},\; Y\in \{D,N\}.&
\end{split}
\end{equation}
\end{theorem}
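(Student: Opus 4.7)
The plan is to mimic the proof of Theorem \ref{ta.1} essentially verbatim, replacing the real line by the half-line $(a,\infty)$ and invoking the half-line analogues of the two ingredients used there: an infinitesimal form bound for $\phi \in \{|r|,|s|,|q|^{1/2}\}$ relative to $d/dx$, and the uniform boundedness of $(d/dx)(L_{1,0,0,0}^{a,Y}+EI)^{-1/2}$ in $E \geq 1$.

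First, I would extract from Proposition \ref{p4.2} (specifically from the half-line form bound \eqref{4.13}) the pointwise-in-$\e$ inequality
\begin{equation*}
\|\phi f\|_{L^2((a,\infty);dx)} \leq \e \|f'\|_{L^2((a,\infty);dx)} + c_{\phi}\e^{-1}\|f\|_{L^2((a,\infty);dx)}, \quad f \in \cW((a,\infty)),
\end{equation*}
valid for $0<\e<\e_{\phi}$ and for each $\phi \in \{|r|,|s|,|q|^{1/2}\}$, with appropriate constants $c_\phi,\e_\phi>0$ (this is deduced from \eqref{4.13} by the same square-root-and-rescale maneuver that produced \eqref{2.16}--\eqref{2.18} from \eqref{2.12}--\eqref{2.14}).

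Second, I would substitute $f = (L_{1,0,0,0}^{a,Y}+EI_{L^2((a,\infty);dx)})^{-1/2} g$ for arbitrary $g \in L^2((a,\infty);dx)$. The self-adjointness and nonnegativity of $L_{1,0,0,0}^{a,Y}$ give two clean estimates via the spectral theorem: on one hand, $\|(L_{1,0,0,0}^{a,Y}+EI)^{-1/2}\|_{\cB(L^2((a,\infty);dx))} \leq E^{-1/2}$ for $E>0$; on the other hand, using that $\dom((L_{1,0,0,0}^{a,Y})^{1/2}) = \cW((a,\infty))$ with $\|(L_{1,0,0,0}^{a,Y})^{1/2}h\|_{L^2}^2 = \|h'\|_{L^2}^2$ (by Theorem \ref{t4.3} and the 2nd representation theorem), one obtains
\begin{equation*}
\big\|(d/dx)(L_{1,0,0,0}^{a,Y}+EI)^{-1/2}g\big\|_{L^2} = \big\|(L_{1,0,0,0}^{a,Y})^{1/2}(L_{1,0,0,0}^{a,Y}+EI)^{-1/2}g\big\|_{L^2} \leq \|g\|_{L^2}, \quad E \geq 0,
\end{equation*}
again by the spectral theorem. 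Combining these two bounds with the form estimate above yields
\begin{equation*}
\big\|\phi(L_{1,0,0,0}^{a,Y}+EI)^{-1/2}g\big\|_{L^2} \leq \big(\e + c_\phi\e^{-1}E^{-1/2}\big)\|g\|_{L^2},
\end{equation*}
for all $g \in L^2((a,\infty);dx)$ and $0<\e<\e_\phi$.

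Finally, the choice $\e = E^{-1/4}$ (admissible for $E>\e_\phi^{-4}$) gives the desired bound $(1+c_\phi)E^{-1/4}$, and taking the maxima of the constants over $\phi \in \{|r|,|s|,|q|^{1/2}\}$ produces the uniform $C$ and $E_0$ asserted in \eqref{a.6}. I do not anticipate any serious obstacle: the only difference with the real-line case of Theorem \ref{ta.1} is the source of the uniform bound on the derivative term, and the boundary-condition-compatible identification $\dom((L_{1,0,0,0}^{a,Y})^{1/2}) = \cW((a,\infty))$ from Theorem \ref{t4.3} handles both the Dirichlet and Neumann choices simultaneously, making the proof completely uniform in $Y \in \{D,N\}$.
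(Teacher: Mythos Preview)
Your proposal is correct and follows exactly the approach the paper indicates: the paper does not spell out a proof of Theorem~\ref{ta.2} but merely states that one proceeds ``by employing a similar strategy'' to Theorem~\ref{ta.1}, and that is precisely what you have done. Your direct use of the spectral theorem and the second representation theorem (via the identification $\|(L_{1,0,0,0}^{a,Y})^{1/2}h\|_{L^2}^2 = \|h'\|_{L^2}^2$ for self-adjoint $L_{1,0,0,0}^{a,Y}$) to bound the derivative term is a clean substitute for the appeal to Lemma~\ref{l2.4} in the proof of Theorem~\ref{ta.1}, and is arguably the more transparent way to handle that step in both settings.
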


The next result is an abstract result for pairs of m-accretive operators and provides uniform bounds on certain operator norms when the square root domain of one operator contains the square root domain of the other.

\begin{lemma}\lb{l2.4}$($\cite[Lemma 2.11]{GHN13}$)$.
If $S$ and $T$ are m-accretive operators in the Hilbert space $\cH$ with
\begin{equation}\lb{2.30}
\dom\big(T^{1/2}\big)\subseteq \dom\big(S^{1/2}\big),
\end{equation}
then there exists a constant $C>0$ such that
\begin{align}
\sup_{E\geq 1}\big\|S^{1/2}(T+EI_{\cH})^{-1/2} \big\|_{\cB(\cH)}&\leq C,\lb{2.31}\\
\sup_{E\geq 1}\big\|(S+EI_{\cH})^{1/2}(T+EI_{\cH})^{-1/2} \big\|_{\cB(\cH)}& \leq C.\lb{2.32}
\end{align}
\end{lemma}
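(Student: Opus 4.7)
The two uniform bounds will be established in three steps.

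\emph{Base case.} Since $(T + I_{\cH})^{-1/2}$ is bounded with range
$\dom\bigl((T + I_{\cH})^{1/2}\bigr) = \dom\bigl(T^{1/2}\bigr) \subseteq \dom\bigl(S^{1/2}\bigr)$
by hypothesis, the composition $S^{1/2}(T + I_{\cH})^{-1/2}$ is everywhere defined on $\cH$.
As the product of a closed operator with a bounded one, it is closed, hence bounded by the
closed graph theorem; denote its norm by $C_0$.

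\emph{First inequality.} For $E \geq 1$ I use the factorization
\[
S^{1/2}(T + EI_{\cH})^{-1/2} = \bigl[S^{1/2}(T + I_{\cH})^{-1/2}\bigr]\bigl[(T + I_{\cH})^{1/2}(T + EI_{\cH})^{-1/2}\bigr],
\]
which is meaningful because $(T + EI_{\cH})^{-1/2}$ maps $\cH$ into
$\dom\bigl(T^{1/2}\bigr) = \dom\bigl((T + I_{\cH})^{1/2}\bigr)$. The first bracket has norm
$\leq C_0$. For the second bracket, the symbol $\psi_E(z) = (z + 1)^{1/2}(z + E)^{-1/2}$ has
modulus at most $1$ on the closed right half-plane whenever $E \geq 1$, since
$|z + 1|^2 = (\Re z)^2 + 2\Re z + 1 + (\Im z)^2 \leq (\Re z)^2 + 2E\Re z + E^2 + (\Im z)^2 = |z + E|^2$
for $\Re z \geq 0$. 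This pointwise bound transfers to a uniform operator bound
$\|(T + I_{\cH})^{1/2}(T + EI_{\cH})^{-1/2}\|_{\cB(\cH)} \leq C_1$ via the Balakrishnan integral
representation of fractional powers of m-accretive operators combined with the basic resolvent
estimate $\|(T + \mu I_{\cH})^{-1}\|_{\cB(\cH)} \leq \mu^{-1}$ for $\mu > 0$. Composing the two
bounded factors establishes the first inequality with $C = C_0 C_1$.

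\emph{Second inequality.} Apply the Balakrishnan representation to $(S + EI_{\cH})^{1/2}$ on
$\dom(S)$, split $(S + EI_{\cH})(S + (E + \mu)I_{\cH})^{-1} = S\,(S + (E + \mu)I_{\cH})^{-1} + E\,(S + (E + \mu)I_{\cH})^{-1}$,
and use the commutation $S\,(S + (E + \mu)I_{\cH})^{-1} = S^{1/2}(S + (E + \mu)I_{\cH})^{-1}S^{1/2}$
(both are functions of the single m-accretive operator $S$) to obtain, after integration, the
identity
\[
(S + EI_{\cH})^{1/2} h = S^{1/2}(S + EI_{\cH})^{-1/2}S^{1/2} h + E\,(S + EI_{\cH})^{-1/2}h,
\]
which extends from $\dom(S)$ to $\dom\bigl(S^{1/2}\bigr)$ by density. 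Substituting
$h = (T + EI_{\cH})^{-1/2}g$ and taking norms, the first inequality controls
$\|S^{1/2}(T + EI_{\cH})^{-1/2}g\|_{\cH} \leq C\|g\|_{\cH}$; the uniform bound
$\|S^{1/2}(S + EI_{\cH})^{-1/2}\|_{\cB(\cH)} \leq C_2$ (same functional-calculus argument as above,
the symbol $z^{1/2}(z + E)^{-1/2}$ being pointwise bounded by $1$ on the closed right half-plane)
controls the first term; and the resolvent bounds
$\|(S + EI_{\cH})^{-1/2}\|_{\cB(\cH)},\, \|(T + EI_{\cH})^{-1/2}\|_{\cB(\cH)} \leq E^{-1/2}$ for
$E > 0$ control the second. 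Combining yields
\[
\|(S + EI_{\cH})^{1/2}(T + EI_{\cH})^{-1/2}g\|_{\cH} \leq (C C_2 + 1)\|g\|_{\cH}, \quad E \geq 1,
\]
establishing the second bound.

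\emph{Main obstacle.} The technical crux is the passage from the pointwise symbol bound
$|z^{1/2}(z + E)^{-1/2}| \leq 1$ on $\{\Re z \geq 0\}$ to the operator-norm estimate
$\|A^{1/2}(A + EI_{\cH})^{-1/2}\|_{\cB(\cH)} \leq C$ uniformly in $E \geq 0$ for a general
m-accretive operator $A$ (needed both for the second bracket in the first inequality with $A = T$
and for the auxiliary estimate with $A = S$ in the second inequality). Since m-accretive operators
lie at the boundary of the sectors on which the bounded $H^{\infty}$-calculus is classical, this
operator bound must be obtained by direct Balakrishnan-type manipulations supported solely by
the basic resolvent estimate $\|(A + \mu I_{\cH})^{-1}\|_{\cB(\cH)} \leq \mu^{-1}$ for $\mu > 0$.
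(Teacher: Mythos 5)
First, a remark on the comparison itself: the paper does not prove this lemma — it is quoted verbatim from \cite[Lemma~2.11]{GHN13} — so there is no in-paper argument to measure your route against. Judged on its own terms, your architecture is sound: the closed-graph argument for $S^{1/2}(T+I_{\cH})^{-1/2}$, the factorization through $(T+I_{\cH})^{1/2}(T+EI_{\cH})^{-1/2}$, and the identity $(S+EI_{\cH})^{1/2}h=S^{1/2}(S+EI_{\cH})^{-1/2}S^{1/2}h+E(S+EI_{\cH})^{-1/2}h$ on $\dom\big(S^{1/2}\big)$ are all correct, and together they do reduce both inequalities to the two auxiliary uniform bounds you isolate.

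The gap sits exactly at those auxiliary bounds, and the justification you offer is not valid as stated. The principle ``a symbol of modulus at most $1$ on the closed right half-plane yields an operator-norm bound'' is the boundedness of the $H^{\infty}$-calculus on the half-plane, which fails for general m-accretive operators (this is essentially McIntosh's counterexample territory), so the pointwise estimate $|(z+1)^{1/2}(z+E)^{-1/2}|\leq 1$ proves nothing by itself. Nor do ``direct Balakrishnan-type manipulations supported solely by $\|(A+\mu I_{\cH})^{-1}\|_{\cB(\cH)}\leq \mu^{-1}$'' obviously rescue it: inserting the Balakrishnan representation of $(T+EI_{\cH})^{-1/2}$ and estimating $\|(T+I_{\cH})^{1/2}(T+(E+t)I_{\cH})^{-1}\|_{\cB(\cH)}\leq 2$ term by term produces the divergent integral $\int_0^{\infty}t^{-1/2}\,dt$, so a naive execution of your own program breaks down. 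What actually closes the gap is Kato's comparison theorem for fractional powers of m-accretive operators \cite{Ka61}: if $A$, $B$ are m-accretive with $\dom(A)\subseteq\dom(B)$ and $\|Bu\|_{\cH}\leq M\|Au\|_{\cH}$ on $\dom(A)$, then $\|B^{1/2}u\|_{\cH}\leq c\,M^{1/2}\|A^{1/2}u\|_{\cH}$ on $\dom\big(A^{1/2}\big)$. Since accretivity gives $\|(T+EI_{\cH})u\|_{\cH}\geq E\|u\|_{\cH}$, hence $\|(T+I_{\cH})u\|_{\cH}\leq 2\|(T+EI_{\cH})u\|_{\cH}$ for $E\geq 1$, and likewise $\|Au\|_{\cH}\leq 2\|(A+EI_{\cH})u\|_{\cH}$, both of your uniform bounds follow with explicit constants; the same theorem also supplies the identification $\dom\big((T+I_{\cH})^{1/2}\big)=\dom\big(T^{1/2}\big)$ that your base case uses tacitly. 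With that single classical input substituted for the symbol-calculus heuristic, your proof is complete.
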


Next, we turn to a finite interval analogue of the Trudinger-type infinitesimal form bound given by Schechter in \cite[Theorem 2.7.1]{Sc81} (for completeness, we provide a proof). 

\begin{theorem}\lb{tb.1}
Let $a,b\in \bbR$ with $a<b$ and suppose that $w\in L^2((a,b);dx)$. Then
\begin{align}
& |f(x)|^2 \leq \e \|f'\|_{L^2((a,b);dx)}^2 + \big((b-a)^{-1} +\e^{-1} \big)\|f\|_{L^2((a,b);dx)}^2,\lb{b.1}  \\
& \|wf\|_{L^2((a,b);dx)}^2 \leq \e N_w\|f' \|_{L^2((a,b);dx)}^2+\big((b-a)^{-1} + \e^{-1} \big)N_w \|f\|_{L^2((a,b);dx)},\lb{b.2}   \no \\
&\hspace*{5.6cm}f\in W^{1,2}((a,b)),\, x\in [a,b],\, \e>0,
\end{align}
where we abbreviated $N_w=\|w\|_{L^2((a,b);dx)}^2$.
\end{theorem}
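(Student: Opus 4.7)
The plan is to establish the pointwise estimate \eqref{b.1} first, and then deduce the weighted bound \eqref{b.2} immediately by multiplying \eqref{b.1} by $|w(x)|^2$ and integrating in $x$ over $(a,b)$; the integral $\int_a^b |w(x)|^2\,dx = N_w$ then produces the factor $N_w$ on both terms on the right-hand side. (The norm $\|f\|_{L^2((a,b);dx)}$ appearing on the second term of \eqref{b.2} should be squared; this is a transparent typographical issue confirmed by dimensional analysis.)

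For the pointwise bound \eqref{b.1}, I would pass to the continuous representative of $f \in W^{1,2}((a,b))$ and begin with the absolutely continuous identity
\begin{equation*}
|f(x)|^2 - |f(y)|^2 = 2\int_y^x \Re\big(\overline{f(t)}\,f'(t)\big)\,dt, \quad x,y \in [a,b],
\end{equation*}
which rests on the a.e.\ product rule $(|f|^2)' = 2\Re(\overline{f}\,f')$ (valid since $f$ is bounded on $[a,b]$ and $f' \in L^2((a,b);dx)$). Enlarging the domain of integration to $(a,b)$ and applying Young's inequality in the form $2|f(t)||f'(t)| \leq \e |f'(t)|^2 + \e^{-1}|f(t)|^2$ produces
\begin{equation*}
|f(x)|^2 \leq |f(y)|^2 + \e\|f'\|_{L^2((a,b);dx)}^2 + \e^{-1}\|f\|_{L^2((a,b);dx)}^2, \quad x,y \in [a,b].
\end{equation*}
The concluding step is to average in $y$: integrating the above over $y \in [a,b]$ and dividing by $b-a$ converts the $|f(y)|^2$ contribution into $(b-a)^{-1}\|f\|_{L^2((a,b);dx)}^2$, yielding exactly \eqref{b.1}.

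I do not anticipate any substantial obstacle; the only point needing care is the justification of the fundamental theorem of calculus identity at the $W^{1,2}$-level, which is standard given the Sobolev embedding $W^{1,2}((a,b)) \hookrightarrow C([a,b])$ on bounded intervals together with the absolute continuity of the continuous representative on $[a,b]$.
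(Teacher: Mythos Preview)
Your proof is correct and follows essentially the same route as the paper's own argument. The only cosmetic differences are: the paper differentiates $f(x)^2$ and then invokes the inequality $\big||f(x)|^2-|f(x')|^2\big|\le\big|f(x)^2-f(x')^2\big|$, whereas you differentiate $|f|^2$ directly; and the paper replaces your averaging step in $y$ by choosing a single point $x_0'$ at which $|f|^2$ attains its mean value on $[a,b]$.
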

\begin{proof}
If $f\in AC([a,b])$ with $f'\in L^2((a,b);dx)$, then
\begin{equation}\lb{b.3}
f(x)^2-f(x')^2=2\int_{x'}^x ds\, f'(s)f(s), \quad a\leq x' \leq x \leq b,
\end{equation}
so that
\begin{equation}\lb{b.4}
\big| f(x)^2-f(x')^2 \big| \leq 2 \int_{x'}^x ds\, |f'(s)f(s)|, \quad a \leq x' \leq x \leq b.
\end{equation}
On the other hand, 
\begin{equation}\lb{b.5}
\e|f'(s)|^2+\e^{-1}|f(s)|^2\geq 2\big| f'(s)f(s)\big|\ \text{for a.e.\ } s\in (a,b), \, \e>0.
\end{equation}
As a result, one obtains the estimate
\begin{align}
\big||f(x)|^2-|f(x')|^2 \big|&\leq \big|f(x)^2-f(x')^2 \big|\no\\
&\leq \int_{x'}^x ds\, \big\{ \e|f'(s)|^2+\e^{-1}|f(s)|^2 \big\}   \no \\
&\leq \int_a^bds\, \big\{ \e|f'(s)|^2+\e^{-1}|f(s)|^2 \big\},    \lb{b.6} \\
&\hspace*{1.8cm} a \leq x' \leq x \leq b,\; \e>0,\no
\end{align}
and subsequently,
\begin{equation}\lb{b.7}
|f(x)|^2\leq |f(x')|^2 + \e\int_a^b ds\, |f'(s)|^2 + \e^{-1}\int_a^b ds\, |f(s)|^2,\quad x,x'\in[a,b],\; \e>0.
\end{equation}
Since $f$ is absolutely continuous on $[a,b]$, $|f|^2$ attains its average value on $[a,b]$.  Let $x_0'\in [a,b]$ such that
\begin{equation}\lb{b.8}
|f(x_0')|^2=(b-a)^{-1}\int_a^bds\, |f(s)|^2.
\end{equation}
Choosing $x'=x_0'$ in \eqref{b.7} yields \eqref{b.1}.  Pre-multiplying both sides of \eqref{b.1} by $w(x)$ and integrating from $a$ to $b$ yields \eqref{b.2}.
\end{proof}

The following elementary result on infinitesimal form boundedness has been used repeatedly in the bulk of this paper (again, we provide the proof for completeness).

\begin{lemma}
Suppose that $\mathfrak{Q}$ is a sectorial sesquilinear form and $\mathfrak{q}_j$, $j\in \{1,2\}$, are sesquilinear forms in $\cH$ $($a complex, separable Hilbert space\,$)$.  If $\mathfrak{q}_j$ are 
infinitesimally bounded with respect to $\mathfrak{Q}$, so that 
$\dom(\mathfrak{Q})\subset \dom\big(\mathfrak{q}_j\big)$, and
\begin{equation}\lb{c.1}
\big|\mathfrak{q}_j(f,f) \big|\leq \e |\mathfrak{Q}(f,f)| + \eta_j(\e)\|f\|_{\cH}^2,\quad f\in \dom(\mathfrak{Q}),
\; 0<\e<\e_j, 
\end{equation}
for some constants $\e_j>0$ and functions $\eta_j:(0,\e_j)\rightarrow (0,\infty)$, $j\in \{1,2\}$, then 
$\mathfrak{q}_2$ is infinitesimally bounded with respect to the sum $\mathfrak{Q}+\mathfrak{q}_1$ 
with domain $\dom(\mathfrak{Q})$ and
\begin{equation}\lb{c.2}
\big|\mathfrak{q}_2(f,f) \big|\leq \e |\mathfrak{Q}(f,f)+\mathfrak{q}_1(f,f)| + \eta_0(\e)\|f\|_{\cH}^2,\quad f\in \dom(\mathfrak{Q}),\; 0<\e<\e_0,
\end{equation}
where 
\begin{equation}\lb{c.3}
\e_0=2\min\{1/2,\e_1,\e_2\},\quad \eta_0(\e)=\eta_1(\e/2)+\eta_2(\e/2),\quad 0<\e<\e_0.
\end{equation}
\end{lemma}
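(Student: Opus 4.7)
The plan is to combine the two infinitesimal form bounds \eqref{c.1} by exchanging $|\mathfrak{Q}(f,f)|$ for $|\mathfrak{Q}(f,f) + \mathfrak{q}_1(f,f)|$ via the triangle inequality, using $\mathfrak{q}_1$ itself to absorb the discrepancy. The first observation is that the prescribed value $\e_0 = 2\min\{1/2,\e_1,\e_2\}$ is chosen precisely so that for $0<\e<\e_0$ the parameter $\e/2$ lies in both $(0,\e_1)$ and $(0,\e_2)$ and additionally satisfies $\e/2<1/2$; this last inequality is what will keep the factor $(1-\e/2)^{-1}$ below $2$ in the step below.

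Next, I would bound $|\mathfrak{Q}(f,f)|$ from above by running the triangle inequality
\[
|\mathfrak{Q}(f,f)| \;\leq\; |\mathfrak{Q}(f,f) + \mathfrak{q}_1(f,f)| + |\mathfrak{q}_1(f,f)|
\]
and then controlling $|\mathfrak{q}_1(f,f)|$ by \eqref{c.1} with $j=1$ and parameter $\e/2$. Solving the resulting linear inequality for $|\mathfrak{Q}(f,f)|$ yields
\[
|\mathfrak{Q}(f,f)| \;\leq\; (1-\e/2)^{-1} \, |\mathfrak{Q}(f,f)+\mathfrak{q}_1(f,f)| + (1-\e/2)^{-1}\,\eta_1(\e/2)\,\|f\|_{\cH}^2,
\]
which is legitimate because $1-\e/2>1/2$.

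Substituting this estimate into \eqref{c.1} with $j=2$ at parameter $\e/2$ gives
\[
|\mathfrak{q}_2(f,f)| \;\leq\; \tfrac{\e}{2-\e}\,|\mathfrak{Q}(f,f)+\mathfrak{q}_1(f,f)| + \tfrac{\e}{2-\e}\,\eta_1(\e/2)\,\|f\|_{\cH}^2 + \eta_2(\e/2)\,\|f\|_{\cH}^2.
\]
Since $\e<\e_0\leq 1$, one checks that $\e/(2-\e)<\e$ and, simultaneously, $\e/(2-\e)<1$, so each of the two coefficients on the right-hand side can be relaxed: the first to $\e$ and the second to $1$. This reproduces \eqref{c.2} with $\eta_0(\e)=\eta_1(\e/2)+\eta_2(\e/2)$ exactly as claimed.

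The argument is essentially bookkeeping, so the only real obstacle is to verify that the chosen threshold $\e_0=2\min\{1/2,\e_1,\e_2\}$ simultaneously validates (a) the admissibility of the parameter $\e/2$ in \emph{both} hypotheses \eqref{c.1}, and (b) the geometric-series-style inversion of $(1-\e/2)$ needed to solve for $|\mathfrak{Q}(f,f)|$. No spectral-theoretic or functional-analytic input beyond the triangle inequality and the given form bounds is required.
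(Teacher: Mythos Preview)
Your proof is correct and follows essentially the same approach as the paper: both arguments use the triangle inequality to relate $|\mathfrak{Q}(f,f)|$ and $|\mathfrak{Q}(f,f)+\mathfrak{q}_1(f,f)|$ via $|\mathfrak{q}_1(f,f)|$, then close using the infinitesimal bound on $\mathfrak{q}_1$. The only cosmetic difference is that the paper solves for $(1-\e)|\mathfrak{q}_1(f,f)|$ in terms of $|\mathfrak{Q}+\mathfrak{q}_1|$ (working with parameter $\e$ and rescaling $\e\mapsto\e/2$ at the end), whereas you solve for $(1-\e/2)|\mathfrak{Q}(f,f)|$ (working with $\e/2$ from the start); the resulting estimates and constants coincide.
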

\begin{proof}
Beginning with \eqref{c.1}, we use the triangle inequality to obtain
\begin{equation}\lb{c.4}
\begin{split}
\big|\mathfrak{q}_j(f,f) \big|\leq \e \big| \mathfrak{Q}(f,f)+\mathfrak{q}_1(f,f) \big|+\e\big|\mathfrak{q}_1(f,f) \big|+\eta_j(\e)\|f\|_{\cH}^2,&\\
f\in \dom(\mathfrak{Q}),\; 0<\e<\e_j,\, j\in\{1,2\}.&
\end{split}
\end{equation}
Taking $j=1$ in \eqref{c.4}, one infers that
\begin{equation}\lb{c.5}
\begin{split}
(1-\e)\big|\mathfrak{q}_j(f,f) \big|\leq \e \big| \mathfrak{Q}(f,f)+\mathfrak{q}_1(f,f) \big|+\eta_1(\e)\|f\|_{\cH}^2,&\\
f\in \dom(\mathfrak{Q}),\; 0<\e<\e_1.&
\end{split}
\end{equation}
Now taking $j=2$ in \eqref{c.1} and applying \eqref{c.5}, one obtains
\begin{align}
\big|\mathfrak{q}_2(f,f)\big|&\leq \e \big| \mathfrak{Q}(f,f)+\mathfrak{q}_1(f,f) \big|+\e\big|\mathfrak{q}_1(f,f) \big|+\eta_2(\e)\|f\|_{\cH}^2    \no\\
&\leq \e \big| \mathfrak{Q}(f,f)+\mathfrak{q}_1(f,f) \big|+(1-\e)\big|\mathfrak{q}_1(f,f) \big|+\eta_2(\e)\|f\|_{\cH}^2     \no \\
&\leq 2\e \big| \mathfrak{Q}(f,f)+\mathfrak{q}_1(f,f) \big| + \big[\eta_1(\e)+\eta_2(\e) \big]\|f\|_{\cH}^2,   \lb{c.6} \\
&\hspace*{1.5cm}f\in \dom(\mathfrak{Q}),\; 0<\e<\min\{1/2,\e_1,\e_2\}.\no
\end{align}
Consequently, \eqref{c.2} and \eqref{c.3} follow from \eqref{c.6} by rescaling the parameter $\e$.
\end{proof}

\medskip

{\bf Acknowledgments.} We are indebted to Jonathan Eckhardt, Chris Evans, and Gerald Teschl 
for helpful discussions.   We would also like to thank an anonymous referee for valuable suggestions that improved the presentation of this material.


\end{document}